\declaretheorem[within=section]{definition}
\declaretheorem[sibling=definition]{theorem}
\declaretheorem[sibling=definition]{proposition}
\declaretheorem[within=section]{assumption}
\declaretheorem[sibling=definition]{corollary}
\declaretheorem[sibling=definition]{remark}
\declaretheorem[sibling=definition]{example}
\declaretheorem[sibling=definition]{lemma}
\DeclareMathOperator*{\argmin}{argmin}
\newcommand{\ignore}[1]{}
\newcommand{\R}{\mathbb{R}}
\newcommand{\proxop}{\mathop{\mathrm{prox}}\nolimits}
\newcommand{\prox}{\proxop_{\alpha \psi}}
\newcommand{\proxgjs}{\proxop_{\alpha \varphi}}
\newcommand{\eqdef}{\coloneqq} 
\newcommand{\diag}{\mD} 
\newcommand{\blockdiag}{\text{BlockDiag}}
\newcommand{\cA}{{\cal A}}
\newcommand{\cB}{{\cal B}}
\newcommand{\cC}{{\cal C}}
\newcommand{\cI}{{\cal I}}
\newcommand{\cM}{{\cal M}}
\newcommand{\cO}{{\cal O}}
\newcommand{\cS}{{\cal S}}
\newcommand{\cU}{{\cal U}}
\newcommand{\mA}{{\bf A}}
\newcommand{\mD}{{\bf D}}
\newcommand{\mG}{{\bf G}}
\newcommand{\mI}{{\bf I}}
\newcommand{\mJ}{{\bf J}}
\newcommand{\mM}{{\bf M}}
\newcommand{\mQ}{{\bf Q}}
\newcommand{\mU}{{\bf U}}
\newcommand{\mW}{{\bf W}}
\newcommand{\mX}{{\bf X}}
\newcommand{\mY}{{\bf Y}}
\newcommand{\mZ}{{\bf Z}}
\newcommand{\ones}{e}
\newcommand{\E}[1]{\mathbb{E}\left[#1\right] } 
\newcommand{\Prob}[1]{\mathbb{P}\left(#1\right) }
\newcommand{\norm}[1]{\left \| #1 \right\|}
\providecommand{\Null}[1]{{\rm Null}\left( #1\right)}
\providecommand{\Range}[1]{{\rm Range}\left( #1\right)}
\providecommand{\Vect}[1]{\mbox{Vec}\left( #1\right)}
\def\<#1,#2>{\left\langle #1,#2\right\rangle}
\newcommand{\NORMG}[1]{\left \| #1\right  \|^2}
\newcommand{\compactify}{} 
\newcommand{\eR}{{\color{blue}e}} %
\newcommand{\probx}{{\color{black} \rho}}
\newcommand{\Popt}{{\color{black}\mW}}
\newcommand{\pLi}{{\color{black}p_i}} 
\newcommand{\eLi}{{\color{black}e_i}} 
\newcommand{\Ind}[1]{I_{#1} } 
\newcommand{\Lift}[1]{Q \left(#1\right) }
\newcommand{\pp}{{\color{red}p}} 
\newcommand{\vv}{{\color{red}v}}
\newcommand{\ff}{{\color{red} f}} 
\newcommand{\LL}{{\color{red} L}} 
\newcommand{\mmu}{{\color{red} \mu}} 
\newcommand{\dd}{{\color{red} d}} 
\newcommand{\sS}{{\color{red} S}} 
\newcommand{\ppp}{{\pp}} 
\newcommand{\ppsi}{{\color{red} \psi}} 
\newcommand{\aalpha}{{\color{red} \alpha}} 
\newcommand{\eeta}{{\color{red} \eta}} 
\newcommand{\ggamma}{{\color{red} \gamma}} 
\newcommand{\xx}{{\color{red} x}} 
\newcommand{\yy}{{\color{red} y}} 
\newcommand{\zz}{{\color{red} z}} 
\newcommand{\ww}{{\color{red} w}} 
\newcommand{\ggg}{{\color{red} g}} 
\newcommand{\mmM}{{\color{red}  {\bf M}}}
\newcommand{\ee}{{\color{red}  e}}
\newcommand{\PP}{{\color{red}  P}}
\newcommand{\ccL}{{\cal L}}
\newcommand{\Odd}{{O_{dd} }}
\newcommand{\cL}{{\color{blue}\ccL'}}
\newcommand{\cLL}{{\color{red} \ccL}} 
\newcommand{\BD}{{\bf D_B}}
\begin{document}

\title{Variance Reduced Coordinate Descent with Acceleration: \\ New Method With a Surprising Application to Finite-Sum Problems}

\author[1]{Filip Hanzely}
\author[1]{Dmitry Kovalev}
\author[1]{Peter Richt\'{a}rik}

\affil[1]{King Abdullah University of Science and Technology, Thuwal, Saudi Arabia}

\maketitle

\begin{abstract}
We propose an accelerated version of stochastic variance reduced coordinate descent -- ASVRCD. As other variance reduced coordinate descent methods such as SEGA or SVRCD, our method can deal with problems that include a non-separable and non-smooth regularizer, while accessing a random block of partial derivatives in each iteration only. However, ASVRCD incorporates Nesterov's momentum, which offers favorable iteration complexity guarantees over both  SEGA and SVRCD. As a by-product of our theory, we show that a variant of Katyusha~\cite{allen2017katyusha} is a specific case of ASVRCD, recovering the optimal oracle complexity for the finite sum objective.
\end{abstract}

\section{Introduction}
In this paper, we aim to solve the regularized optimization problem
\begin{equation}\label{eq:problem}
\compactify \min_{x\in \R^d}   P(x) := f(x) + \psi(x),
\end{equation}
where function $f$ is convex and differentiable, while the regularizer $\psi$ is convex and non-smooth. Furthermore, we assume that the dimensionality $d$ is large. 

The most standard approach to deal with the huge $d$ is to decompose the space, i.e., use coordinate descent, or, more generally, subspace descent methods~\cite{nesterov2012efficiency, wright2015coordinate, kozak2019stochastic}. Those methods are especially popular as they achieve a linear convergence rate on strongly convex problems while enjoying a relatively cheap cost of performing each iteration.

However, coordinate descent methods are only feasible if the regularizer $\psi$ is separable~\cite{richtarik2014iteration}. In contrast, if $\psi$ is not separable, the corresponding stochastic gradient estimator has an inherent (non-zero) variance at the optimum, and thus the linear convergence rate is not achievable. 

This phenomenon is, to some extent, similar when applying Stochastic Gradient Descent (SGD)~\cite{robbins, nemirovski2009robust} on finite sum objective -- the corresponding stochastic gradient estimator has a (non-zero) variance at the optimum, which prevents SGD from converging linearly. Recently, the issue of sublinear convergence of SGD has been resolved using the idea of control variates~\cite{hickernell2005control}, resulting in famous variance reduced methods such as SVRG~\cite{johnson2013accelerating} and SAGA~\cite{defazio2014saga}. 

Motivated by the massive success of variance reduced methods for finite sums, control variates have been proposed to ``fix'' coordinate descent methods to minimize problem~\eqref{eq:problem} with non-separable $\psi$. To best of our knowledge, there are two such algorithms in the literature -- SEGA~\cite{hanzely2018sega} and SVRCD~\cite{hanzely2019one}, which we now quickly describe.\footnote{VRSSD~\cite{kozak2019stochastic} is yet another stochastic subspace descent algorithm aided by control variates; however, it was proposed to minimize $f$ only (i.e., considers $\psi=0$). }

Let $x^k$ be the current iterate of SEGA (or SVRCD) and suppose that the oracle reveals $\nabla_i f(x^k)$ (for $i$ chosen uniformly at random). The simplest unbiased gradient estimator of $\nabla f(x^k)$ can be constructed as $\tilde{g}^k= d\nabla_i f(x^k)e_i$ (where $e_i\in \R^d$ is the $i$-th standard basis vector). The idea behind these methods is to enrich $\tilde{g}^k$ using a control variate $h^k\in \R^d$, resulting in a new (still unbiased) gradient estimator $g^k$:
\[
g^k  = d\nabla_i f(x^k)e_i - dh_i^ke_i + h^k.
\]

\emph{How to choose the sequence of control variates $\{h^k\}$?} Intuitively, we wish for both sequences $\{h^k\}$ and $\{\nabla f(x^k\})$ to have an identical limit point. In such case, we have $\lim_{k \rightarrow \infty }\mathrm{Var}(g^k) =0$, and thus one shall expect faster convergence. There is no unique way of setting $\{h^k\}$ to have the mentioned property satisfied -- this is where SEGA and SVRCD differ. See Algorithm~\ref{alg:SEGAAS} for details.

\begin{algorithm}[h]
    \caption{SEGA and SVRCD}
    \label{alg:SEGAAS}
    \begin{algorithmic}
        \REQUIRE Stepsize $\alpha>0$, starting point $x^0\in\R^d$, probability vector $p$: $p_i\eqdef \Prob{i\in S} $
        \STATE Set $h^0 = 0 \in \R^d$
        \FOR{ $k=0,1,2,\ldots$ }
        \STATE{Sample random  $S\subseteq \{1,2,\dots,d\}$}
        \STATE{$g^k = \sum \limits_{i\in S}  \frac{1}{\pLi}(\nabla_i f(x^k) - h_i^k)\eLi + h^k$}
        \STATE{$x^{k+1} = \prox(x^k - \alpha g^k)$}
        \STATE{$h^{k+1} = \begin{cases} h^{k} +\sum \limits_{i\in S}( \nabla_i f(x^k) - h^{k}_i)\eLi & \text{for SEGA} \\  \begin{cases} \nabla f(x^k) & \text{w.p. } \probx \\ h^k & \text{w.p. } 1-\probx\end{cases}&  \text{for SVRCD} \end{cases} $}
        \ENDFOR
    \end{algorithmic}
\end{algorithm}

In this work, we continue the above  research  along the lines of variance reduced coordinate descent algorithms, with surprising consequences.

\subsection{Contributions\label{sec:contrib}}
Here we list the main contributions of this paper.

$\triangleright$ {\bf Exploiting prox in SEGA/SVRCD.} Assume that the regularizer $\psi$ includes an indicator function of some affine subspace of $\R^d$. We show that both SEGA and SVRCD might exploit this fact, resulting in a faster convergence rate. As a byproduct, we establish the same result in the more general framework from~\cite{hanzely2019one} (presented in the appendix).

$\triangleright$ {\bf Accelerated SVRCD.} We propose an accelerated version of SVRCD - ASVRCD. ASVRCD is the first accelerated variance reduced coordinate descent to minimize objectives with non-separable, proximable regularizer.\footnote{We shall note that an accelerated version of SEGA was already proposed in~\cite{hanzely2018sega} for $\psi = 0$ -- this was rather an impractical result demonstrating that SEGA can match state-of-the art convergence rate of accelerated coordinate descent from~\cite{allen2016even, nesterov2017efficiency, hanzely2018accelerated}. In contrast, our results cover any convex $\psi$.}

$\triangleright$ {\bf SEGA/SVRCD/ASCRVD generalizes SAGA/L-SVRG/L-Katyusha.} We show a surprising link between SEGA and SAGA. In particular, SAGA is a special case of SEGA; and the new rate we obtain for SEGA recovers the tight complexity of SAGA~\cite{qian2019saga, gazagnadou2019optimal}. Similarly, we recover loopless SVRG (L-SVRG)~\cite{hofmann2015variance, kovalev2019don} along with its best-known rate~\cite{hanzely2019one, lsvrgas} using a result for SVRCD. Lastly, as a particular case of ASVRCD, we recover an algorithm which is marginally preferable to loopless Katyusha (L-Katyusha)~\cite{lsvrgas}: while we recover their iteration complexity result, our proof is more straightforward, and at the same time, the stepsize for the proximal operator is smaller.\footnote{This is preferable especially if the proximal operator has to be estimated numerically.}

\subsection{Preliminaries}

As mentioned in Section~\ref{sec:contrib}, the new results we provide i are particularly interesting if the regularizer $\psi$ contains an indicator function of some affine subspace of $\R^d$. 
 
 \begin{assumption}\label{ass:indicator}

 Assume that $\Popt$ is a projection matrix such that 
 \begin{equation}\label{eq:danjadsou}
\psi (x) = \begin{cases}
 \psi'(x) & {\text if}\quad x \in \{ x^0 + \Range{ \Popt} \} \\
\infty & {\text if} \quad  x \not \in \{ x^0 + \Range{ \Popt} \} \end{cases}  \end{equation}
for some convex function $\psi'(x)$. Furthermore, suppose that the proximal operator of $\psi$ is cheap to compute. 
 \end{assumption}

\begin{remark}
If $\psi$ is convex, there is always some $\Popt$ such that~\eqref{eq:danjadsou} holds as one might choose $\Popt=\mI$.
\end{remark}

Next, we require smoothness of the objective, as well as the strong convexity over the affine subspace given by $\Range{\Popt}$.

\begin{assumption}\label{as:smooth_strongly_convex}
Function $f$ is $\mM$-smooth, i.e.,  for all $ x,y\in \R^d$:\footnote{We define $\| x\|^2 \eqdef \langle x,x\rangle$ and $ \|x \|^2_{\mM} \eqdef \langle \mM x, x\rangle$.}
\[
f(x)\leq f(y) + \langle \nabla f(y),x-y\rangle + \frac{1}{2}\| x-y\|^2_{\mM}.
\]
Function $f$ is $\mu$-strongly convex over $ \{ x^0 + \Range{ \Popt} \}$, i.e.,  for all $ x,y\in\{ x^0 + \Range{ \Popt} \}$:
\begin{align}
 f(x)&\geq f(y) + \langle \nabla f(y),x-y\rangle + \frac{\mu}{2}\| x-y\|^2.\label{eq:sc}
\end{align}
\end{assumption}

\begin{remark}
Smoothness with respect to matrix $\mM$ arises naturally in various applications. For example, if $f(x) = f'(\mA x)$, where $f'$ is $L'$-smooth (for scalar $L'>0$), we can derive that $f$ is $\mM = L'\mA^\top \mA$-smooth.
\end{remark}

In order to stress the distinction between the finite sum setup and the setup from the rest of the paper, we are denoting the finite-sum variables that differ from the non-finite sum case in ${\color{red} \text{red}}$. We thus, {\emph{recommend printing this paper in color}}. 

\section{Better rates for SEGA and SVRCD  \label{sec:sega_is2}}
In this section, we show that a specific structure of nonsmooth function $\psi$ might lead to faster convergence of SEGA and SVRCD.

The next lemma is a direct consequence of Assumption~\ref{ass:indicator} -- it shows that proximal operator of $\psi$ is contractive under $\Popt$-norm.
  \begin{lemma}
Let $\{x^k\}_{k\geq0}$ be a sequence of iterates of Algorithm~\ref{alg:SEGAAS} and let $x^*$ be optimal solution of~\eqref{eq:problem}. Then 
\begin{equation}
x^k  \in \{x^0 + \Range{\Popt}\},\;  x^* \in\{x^0 + \Range{\Popt)}\} . \label{eq:q_identity}
\end{equation}
for all $k$. Furthermore, for any $x,y\in \R^{d}$ we have
\begin{equation}\label{eq:prox_cont}
\| \prox(x ) - \prox(y) \|^2\leq \| x-y\|^2_{\Popt}.
\end{equation}

  \end{lemma}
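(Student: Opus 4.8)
The plan is to handle the two claims in sequence, using the variational definition of the proximal operator throughout. For the membership statement \eqref{eq:q_identity}, I would first observe that for any point $z$,
\[
\prox(z) = \argmin_{x\in\R^d}\left\{ \psi(x) + \frac{1}{2\alpha}\norm{x-z}^2\right\},
\]
and since $\psi(x)=\infty$ whenever $x\notin\{x^0+\Range{\Popt}\}$ by Assumption~\ref{ass:indicator}, the minimizer is necessarily feasible, i.e. $\prox(z)\in\{x^0+\Range{\Popt}\}$ for every $z$. Because the update in Algorithm~\ref{alg:SEGAAS} reads $x^{k+1}=\prox(x^k-\alpha g^k)$, this immediately places $x^{k+1}$ in the affine subspace, and since $x^0$ lies in it trivially ($0\in\Range{\Popt}$), an induction over $k$ yields $x^k\in\{x^0+\Range{\Popt}\}$ for all $k$. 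The same finiteness argument applied to $P=f+\psi$ shows that any minimizer $x^*$ must also lie in $\{x^0+\Range{\Popt}\}$, since $P$ equals $+\infty$ outside it while remaining finite at $x^0$.

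For the contractivity bound \eqref{eq:prox_cont}, the decisive observation is that the membership property just established holds for the prox of \emph{arbitrary} arguments, so writing $p=\prox(x)$ and $q=\prox(y)$ we have $p-q\in\Range{\Popt}$. I would next invoke firm nonexpansiveness of the prox, which follows from the first-order optimality conditions $\tfrac{1}{\alpha}(x-p)\in\partial\psi(p)$ and $\tfrac{1}{\alpha}(y-q)\in\partial\psi(q)$ together with monotonicity of $\partial\psi$; this gives
\[
\norm{p-q}^2 \leq \langle x-y,\, p-q\rangle.
\]

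The final step is to convert the right-hand side into the $\Popt$-norm. Since $\Popt$ is an orthogonal projection ($\Popt=\Popt^\top=\Popt^2$) and $p-q\in\Range{\Popt}$, we have $\Popt(p-q)=p-q$, so $\langle x-y,p-q\rangle=\langle \Popt(x-y),p-q\rangle$; applying Cauchy--Schwarz and then the idempotent-symmetric identity $\norm{\Popt v}^2=\langle\Popt v,v\rangle=\norm{v}_{\Popt}^2$ yields
\[
\norm{p-q}^2 \leq \norm{\Popt(x-y)}\,\norm{p-q} \;\Rightarrow\; \norm{p-q}\leq \norm{\Popt(x-y)} = \norm{x-y}_{\Popt},
\]
which is the claim after squaring. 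The main subtlety I expect is the bookkeeping in this last step: one must use orthogonality of the projection both to move $\Popt$ across the inner product and to identify $\norm{\Popt v}^2$ with the seminorm $\norm{v}_{\Popt}^2$. Without symmetry of $\Popt$ neither identity holds, and the estimate would degrade to the ordinary Euclidean nonexpansiveness of the prox, losing precisely the refinement that the affine-subspace structure buys us.
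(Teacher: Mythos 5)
Your proof is correct. Note that the paper itself offers no written proof of this lemma (it is asserted as ``a direct consequence of Assumption~\ref{ass:indicator}'' and then invoked in the proof of Theorem~\ref{thm:main2}), so there is nothing to match line by line; your argument fills that gap validly. Both halves check out: the membership claim follows exactly as you say, since $\psi=+\infty$ off the affine set forces every prox output, every iterate $x^{k}$ (with $k=0$ trivial), and every minimizer of $P$ into $\{x^0+\Range{\Popt}\}$; and the contraction follows from firm nonexpansiveness $\norm{p-q}^2\le\langle x-y,\,p-q\rangle$ combined with $\Popt(p-q)=p-q$, symmetry of $\Popt$, Cauchy--Schwarz, and $\norm{\Popt v}^2=\norm{v}_{\Popt}^2$. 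You are also right to flag that $\Popt$ must be an \emph{orthogonal} projection (symmetric idempotent); the paper's Assumption~\ref{ass:indicator} says only ``projection matrix,'' but symmetry is implicitly assumed throughout, since $\norm{\cdot}_{\Popt}$ is only a seminorm when $\Popt\succeq 0$, and every instance used (e.g.\ $\Popt=\tfrac1n\ee\ee^\top\otimes\mI$) is symmetric. For comparison, an even shorter route --- probably what the authors meant by ``direct consequence'' --- is to observe that for any $u\in\{x^0+\Range{\Popt}\}$ one has $\norm{u-x}^2=\norm{\Popt(u-x)}^2+\norm{(\mI-\Popt)(x^0-x)}^2$, where the second term is independent of $u$; hence $\prox(x)$ depends on $x$ only through $x^0+\Popt(x-x^0)$, and ordinary nonexpansiveness of the prox applied to these projected arguments gives $\norm{\prox(x)-\prox(y)}\le\norm{\Popt(x-y)}=\norm{x-y}_{\Popt}$ directly. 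Your route and this one are equally rigorous; yours makes the role of monotonicity of $\partial\psi$ explicit, while the factorization argument makes transparent why the projection, and nothing else about $\psi$, controls the contraction.
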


Next, we state the convergence rate of both SEGA and SVRCD under Assumption~\ref{ass:indicator} as Theorem~\ref{thm:sega_as}. We also generalize the main theorem from~\cite{hanzely2019one} (fairly general algorithm which covers SAGA, SVRG, SEGA, SVRCD, and more as a special case; see Section~\ref{sec:analysis2} of the appendix); from which the convergence rate of SEGA/SVRCD follows as a special case. 

\begin{theorem}\label{thm:sega_as}

Let Assumptions~\ref{ass:indicator},~\ref{as:smooth_strongly_convex} hold and denote $p_i\eqdef \Prob{i\in S}$. Consider vector $v = \sum_{i=1}^d \eLi v_i, v_i\geq 0$ such that
\begin{equation} \label{eq:ESO_sega_good}
\mM^{\frac12} \E{  \sum_{i\in S} \frac{1}{\pLi} \eLi \eLi^\top\Popt \sum_{i\in S} \frac{1}{\pLi} \eLi \eLi^\top} \mM^{\frac12}  \preceq \diag(p^{-1}\circ v),
\end{equation}
where $\diag(\cdot)$ is a diagonal operator.\footnote{Returns matrix with the input on the diagonal, zeros everywhere else.}
Then, iteration complexity of SEGA with $\alpha  = \min_i \frac{\pLi}{4v_i+ \mu}$ is $  \max_i\left(\frac{4v_i + \mu}{\pLi \mu} \right)\log\frac1\epsilon$. At the same time, iteration complexity of SVRCD with $\alpha  = \min_i \frac{1}{4v_i\pLi^{-1}+  \mu\probx^{-1}}$ is $  \left(\frac{4\max_i(v_i\pLi^{-1} )+ \mu \probx^{-1}}{ \mu} \right)\log\frac1\epsilon$. 
\end{theorem}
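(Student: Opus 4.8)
The plan is to construct a Lyapunov function
\[
\Phi^k \eqdef \norm{x^k - x^*}^2 + \sigma \sum_{i=1}^d c_i \left(h_i^k - \nabla_i f(x^*)\right)^2,
\]
and to prove the one-step contraction $\E{\Phi^{k+1}\mid x^k,h^k} \le (1-\alpha\mu)\Phi^k$ for the prescribed stepsizes, with suitable nonnegative weights $\sigma, c_i$. Unrolling this inequality gives $\E{\Phi^k}\le(1-\alpha\mu)^k\Phi^0$, and since $\frac1{\alpha\mu}$ equals $\max_i\frac{4v_i+\mu}{\pLi\mu}$ for SEGA and $\frac{4\max_i(v_i\pLi^{-1})+\mu\probx^{-1}}{\mu}$ for SVRCD, the advertised complexities $\frac1{\alpha\mu}\log\frac1\epsilon$ follow immediately. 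Thus the entire content is the one-step inequality.

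First I would control the iterate term. The optimality condition for~\eqref{eq:problem} reads $x^* = \prox(x^* - \alpha\nabla f(x^*))$, and by~\eqref{eq:q_identity} both $x^k$ and $x^*$ lie in $x^0 + \Range{\Popt}$, so $x^k - x^*\in\Range{\Popt}$ and $\Popt(x^k-x^*) = x^k - x^*$. Applying the prox contraction~\eqref{eq:prox_cont} and expanding the $\Popt$-weighted square, the projection collapses the cross term (leaving $-2\alpha\dotprod{g^k-\nabla f(x^*),x^k-x^*}$ with no $\Popt$) while keeping it on the quadratic term:
\[
\norm{x^{k+1}-x^*}^2 \le \norm{x^k-x^*}^2 - 2\alpha\dotprod{g^k-\nabla f(x^*),x^k-x^*} + \alpha^2\norm{g^k-\nabla f(x^*)}^2_{\Popt}.
\]
Taking $\E{\cdot\mid x^k,h^k}$ and using unbiasedness $\E{g^k}=\nabla f(x^k)$, the cross term becomes $-2\alpha\dotprod{\nabla f(x^k)-\nabla f(x^*),x^k-x^*}$, which I would split via Assumption~\ref{as:smooth_strongly_convex}: $\mu$-strong convexity~\eqref{eq:sc} contributes $-2\alpha\mu\norm{x^k-x^*}^2$ (the source of the contraction factor), while $\mM$-smoothness contributes, through co-coercivity, a negative multiple of $\norm{\nabla f(x^k)-\nabla f(x^*)}^2_{\mM^{-1}}$ that I keep in reserve to absorb the gradient part of the variance.

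The crux is the variance $\E{\norm{g^k-\nabla f(x^*)}^2_{\Popt}}$. Writing $g^k - \nabla f(x^*) = \sum_{i\in S}\frac1{\pLi}\left(\nabla_i f(x^k)-h_i^k\right)e_i + h^k - \nabla f(x^*)$ and splitting $\nabla_i f(x^k)-h_i^k = (\nabla_i f(x^k)-\nabla_i f(x^*)) - (h_i^k-\nabla_i f(x^*))$, I would bound the second moment (via $\norm{a+b}^2\le 2\norm{a}^2+2\norm{b}^2$, the origin of the constant $4$) by a gradient part and a control-variate part. This is exactly where~\eqref{eq:ESO_sega_good} is designed to act: sandwiched by $\mM^{1/2}$, the matrix $\E{\sum_{i\in S}\frac1{\pLi}e_ie_i^\top\Popt\sum_{i\in S}\frac1{\pLi}e_ie_i^\top}$ is dominated by $\diag(p^{-1}\circ v)$, which converts both parts into the coordinate-weighted sums $\sum_i \frac{v_i}{\pLi}(\nabla_i f(x^k)-\nabla_i f(x^*))^2$ and $\sum_i\frac{v_i}{\pLi}(h_i^k-\nabla_i f(x^*))^2$. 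The first is charged against the reserved smoothness term; the second must be absorbed by the Lyapunov $h$-term.

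Finally I would use the control-variate recursion. For SEGA, $\E{(h_i^{k+1}-\nabla_i f(x^*))^2\mid x^k,h^k} = (1-\pLi)(h_i^k-\nabla_i f(x^*))^2 + \pLi(\nabla_i f(x^k)-\nabla_i f(x^*))^2$, a coordinate-wise contraction at rate $1-\pLi$; for SVRCD the whole vector is refreshed with probability $\probx$, giving the uniform rate $1-\probx$. Substituting into $\Phi^{k+1}$, the remaining task --- and the main obstacle --- is to choose $\sigma$ and the $c_i$ so that the positive control-variate variance is cancelled by the decrease of the Lyapunov $h$-term, and then to take $\alpha$ as large as possible so that the leftover gradient-variance term is dominated by the reserved smoothness term and the whole expression is $\le (1-\alpha\mu)\Phi^k$. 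Carrying out this balancing forces the per-coordinate constraint $\alpha\le\frac{\pLi}{4v_i+\mu}$ for SEGA, hence $\alpha=\min_i\frac{\pLi}{4v_i+\mu}$, and the analogous $\alpha=\min_i\frac1{4v_i\pLi^{-1}+\mu\probx^{-1}}$ for SVRCD; the difference in where the maximum sits in the two rates is a direct consequence of the coordinate-wise ($1-\pLi$) versus uniform ($1-\probx$) contraction of $h^k$.
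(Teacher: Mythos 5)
Your skeleton --- prox contraction \eqref{eq:prox_cont} with the cross term collapsing because $x^k-x^*\in\Range{\Popt}$, unbiasedness of $g^k$, splitting the inner product into a strong-convexity part and a smoothness (Bregman) reserve, the $\norm{a+b}^2\le 2\norm{a}^2+2\norm{b}^2$ split of the variance (indeed the origin of the constant $4$), the coordinate-wise recursion for $h^k$, and a Lyapunov function of the form ``iterate distance plus weighted control-variate error'' --- is precisely the skeleton of the paper's proof; the paper merely runs it at the level of the general operator framework (Theorem~\ref{thm:main2}) and obtains Theorem~\ref{thm:sega_as} by specializing the weighting operator $\cB$ to a diagonal matrix. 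However, one concrete step of your plan fails for general $\mM$.

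The gap is in what \eqref{eq:ESO_sega_good} actually gives you. Since that inequality is conjugated by $\mM^{\frac12}$, its consequence for an arbitrary vector $z$ is
\begin{equation*}
\E{\Big\|\sum_{i\in S}\tfrac{1}{\pLi}z_i\eLi\Big\|^2_{\Popt}} \;\leq\; \sum_{i=1}^d \tfrac{v_i}{\pLi}\big(\mM^{-\frac12}z\big)_i^2,
\end{equation*}
so the weights $v_i/\pLi$ land on the coordinates of $\mM^{-\frac12}z$, \emph{not} on the coordinates of $z$. Your claim that the two variance parts become $\sum_i \frac{v_i}{\pLi}(\nabla_i f(x^k)-\nabla_i f(x^*))^2$ and $\sum_i \frac{v_i}{\pLi}(h_i^k-\nabla_i f(x^*))^2$ is therefore valid only when $\mM$ is diagonal --- and the interesting instances are exactly the non-diagonal ones (the SAGA reduction in Section~\ref{sec:sagasega} has block-diagonal $\mM$ with dense blocks $\mmM_j$). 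With your raw-coordinate Lyapunov term $\sum_i c_i(h_i^k-\nabla_i f(x^*))^2$, the balancing step cannot close: writing $w \eqdef h^k-\nabla f(x^*)$ and $u \eqdef \nabla f(x^k)-\nabla f(x^*)$, the control-variate variance is the quadratic form $\mM^{-\frac12}\diag(p^{-1}\circ v)\mM^{-\frac12}$ in $w$, which no diagonal $\diag(c)$ can absorb per-coordinate without $\lambda_{\max}$-type losses; symmetrically, the $h$-recursion injects $\sum_i c_i\pLi u_i^2$ in raw coordinates while the reserve $\alpha\norm{u}^2_{\mM^{-1}}$ lives in transformed coordinates. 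Either mismatch degrades the rate below the claimed $\max_i (4v_i+\mu)/(\pLi\mu)$. The paper's resolution is to put the transformation \emph{inside} the Lyapunov function: its potential is $\norm{x^k-x^*}^2+\alpha\NORMG{\cB{\cM^\dagger}^{\frac12}(\mJ^k-\mG(x^*))}$ with diagonal $\cB$, so that the ESO bound, the smoothness reserve, and the $h$-recursion all live in the same ($\mM^{-\frac12}$-transformed) coordinates; the price of this is that the $h$-update must interact cleanly with the transformed norm, which is exactly why Theorem~\ref{thm:main2} assumes ${\cM^\dagger}^{\frac12}$ commutes with the sketch $\cS$ (automatic for the block sampling used in the SAGA/L-SVRG reduction). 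Your proposal is missing this transformation, and without it the advertised per-coordinate stepsizes cannot be extracted.
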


Let us look closer to convergence rate of SVRCD from Theorem~\ref{thm:sega_as}. The optimal vector $v$ is a solution to the following optimization problem
\[
\min_{v\in \R^d}   \;\; \left(\frac{4\max_i\{v_i\pLi^{-1} \}+ \mu \probx^{-1}}{ \mu} \right)\log\frac1\epsilon  \;\; \text{s. t.}   \; \eqref{eq:ESO_sega_good}\; \text{holds}. 
\]
Clearly, there exists a solution of the form $v\propto p$; let us thus choose $v \eqdef \ccL p$ with $\ccL>0$. In this case, to satisfy~\eqref{eq:ESO_sega_good} we must have 
\begin{equation}\label{eq:ccLdef}
\ccL = \lambda_{\max}\left( \mM^{\frac12} \E{  \sum_{i\in S} \frac{1}{\pLi} \eLi \eLi^\top\Popt \sum_{i\in S} \frac{1}{\pLi} \eLi \eLi^\top} \mM^{\frac12} \right)
\end{equation}
and the iteration complexity of SVRCD becomes $\left(\frac{4\ccL+ \mu \probx^{-1}}{ \mu} \right)\log\frac1\epsilon$.\footnote{We decided to not present this, simplified rate in Theorem~\ref{thm:sega_as} for the following two reasons: 1) it would yields a slightly subpotimal rate of SEGA and 2) the connection of to the convergence rate of SAGA from~\cite{qian2019saga} is more direct via~\eqref{eq:ESO_sega_good}.}

\emph{How does $\Popt$ influence the rate?} As mentioned, one can always consider $\Popt=\mI$. In such a case, we recover the convergence rate of SEGA and SVRCD from~\cite{hanzely2019one}. However, the smaller rank of $\Popt$ is, the faster rate is Theorem~\ref{thm:sega_as} providing. To see this, it suffices to realize that if $\ccL$ is increasing in $\Popt$ (in terms of Loewner ordering). 

\begin{example}
Let $\mM= \mI$ and $S=\{i\}$ with probability $d^{-1}$ for all $1\leq i\leq d$. Given that $\Popt=\mI$, it is easy to see that $\ccL = d$. In such case, the iteration complexity of SVRCD is $\left(\frac{4d + \mu \probx^{-1}}{ \mu} \right)\log\frac1\epsilon$. In the other extreme, if $\Popt=\frac1d ee^\top$, we have $\ccL=1$, which yields complexity (of SVRCD) $\left(\frac{4 + \mu \probx^{-1}}{ \mu} \right)\log\frac1\epsilon$. Therefore, given that $\mu = \cO(\probx)$, the low rank of $\Popt$ caused the speedup of order $\Theta(d)$.
\end{example}

We shall also note that the tight rate of SAGA and L-SVRG might be recovered from Theorem~\ref{thm:sega_as} only using a non-trivial $\Popt$ (see Section~\ref{sec:sagasega}), while the original theory of SEGA and SVRCD only yield a suboptimal rate for both SAGA and L-SVRG.


\paragraph{Connection with Subspace SEGA~\cite{hanzely2018sega}.} Assume that function $f$ is of structure $f(x)=h(\mA x)$. As a consequence, we have $\nabla f(x) = \mA^\top \nabla h(\mA x)$ and thus $\nabla f(x)\in \Range{\mA^\top}$. This fact was exploited by Subspace SEGA in order to achieve a faster convergence rate. Our results can mimic Subspace SEGA by setting $\psi$ to be an indicator function of $x^0 + \Range{\mA^\top}$, given that there is no extra non-smooth term in the objective.

\begin{remark}
Throughout all proofs of this section, we have used a weaker conditions than Assumption~\ref{as:smooth_strongly_convex}. In particular, instead of-$\mM$-smoothness, it is sufficient to have\footnote{By $D_f(x,y)$ we denote Bregman distance between $x,y$, i.e., $D_f(x,y)\eqdef f(x)-f(y)-\langle \nabla f(x)$ } 
$
D_{f}(x,x^*) \geq \frac12 \norm{ \nabla f(x)-\nabla f(x^*) }^2_{\mM^{-1}}
$
for all $x\in \R^d$ (Lemma~\ref{lem:smooth_consequence} shows that it is indeed a consequence of $\mM$ smoothness and convexity). At the same time, instead of $\mu$-strong convexity, it is sufficient to have $\mu$-quasi strong convexity, i.e.,  for all $x\in \{x^0 + \Range{\Popt} \}$:
$
f(x^*) \geq f(x) + \langle \nabla f(x), x^*-x\rangle + \frac{\mu}{2}\|x-x^* \|^2. 
$
However, the accelerated method (presented in Section~\ref{sec:acc}) requires the fully general version of Assumption~\ref{as:smooth_strongly_convex}.
\end{remark}

\section{Connection between SEGA (SVRCD) and SAGA (L-SVRG)  \label{sec:sagasega}}

In this section, we show that SAGA and L-SVRG are special cases of SEGA and SVRCD, respectively. At the same time, the previously tightest convergence rate of SAGA~\cite{gazagnadou2019optimal, qian2019saga} and L-SVRG~\cite{hanzely2019one, lsvrgas} follow from Theorem~\ref{thm:sega_as} (convergence rate of SEGA and SVRCD). 

\subsection{Convergence rate of SAGA and L-SVRG \label{sec:saga}}
We quickly state the best-known convergence rate for both SAGA and L-SVRG to minimize the following objective:
\begin{equation}\label{eq:problem_finitesum}
\compactify \min_{\xx\in \R^\dd}  \PP(\xx) \eqdef \underbrace{\frac1n \sum \limits_{j=1}^n \ff_j(\xx)}_{\eqdef \ff(\xx)} + \ppsi(\xx).
\end{equation}

\begin{assumption} \label{as:finitesum}
 Each $\ff_j$ is convex, $\mmM_j$-smooth and $\ff $ is $\mmu$-strongly convex. 
\end{assumption}

Assuming the oracle access to $\nabla \ff_i(\xx^k)$ for $i\in \sS$ (where $\sS$ is a random subset of $\{1,\dots,n \}$), the minibatch SGD~\cite{pmlr-v97-qian19b} uses moves in the direction of the ``plain'' unbiased stochastic gradient $\frac1n \sum \limits_{i\in \sS}  \frac{1}{\ppp_i}\nabla \ff_i(\xx^k) $ (where $\pp_i \eqdef \Prob{i\in \sS} $).

In contrast, variance reduced methods such as SAGA and L-SVRG enrich the ``plain'' unbiased stochastic gradient with control variates: 
\begin{equation}
\ggg^k = \frac1n\sum \limits_{i\in \sS}  \frac{1}{\ppp_i}\left(\nabla \ff_i(\xx^k) - \mJ^k_{:,i} \right)  +  \frac1n \mJ^k \ee.
\end{equation} 
where $\mJ^k \in \R^{\dd\times n}$ is the control matrix and $\ee \in\R^n$ is vector of ones.
The difference between SAGA and L-SVRG lies in the procedure to update $\mJ^k$; SAGA uses the freshest gradient information to replace corresponding columns in $\mJ^k$; i.e.
\begin{equation}\label{eq:saga_update}
\mJ^{k+1}_{:,i} = \begin{cases}
\nabla \ff_i(\xx^k)  & \text{if } i\in \sS \\
\mJ^{k}_{:,i}   & \text{if } i \not \in \sS.
\end{cases}
\end{equation}
On the other hand, L-SVRG sets $\mJ^k$ to the true Jacobian of $f$ upon a successful, unfair coin toss:
 \begin{equation}\label{eq:lsvrg_update}
\mJ^{k+1} = \begin{cases}
\left[ \nabla \ff_1(\xx^k), \dots,  \nabla \ff_n(\xx^k)\right] & \text{w. p. } \probx \\
\mJ^{k}   & \text{w. p. } 1- \probx.
\end{cases}
\end{equation}

The formal statement of SAGA and L-SVRG is provided in the appendix as Algorithm~\ref{alg:saga}, while Proposition~\ref{prop:sagarate} states their convergence rate.

\begin{proposition}\label{prop:sagarate}
Suppose that Assumption~\ref{as:finitesum} holds and let $\vv$ be a nonegative vector such that for all $h_1,\dots,h_n \in \R^{\dd}$ we have
\begin{equation} \label{eq:ESO_saga}
\E{\left\|\sum_{j \in \sS} \mmM^{\frac12}_{j} h_{j}\right\|^{2}} \leq \sum_{j=1}^{n} \pp_j \vv_{j}\left\|h_{j}\right\|^{2} .
\end{equation}
Then the iteration complexity of SAGA with $ \aalpha  = \min_j \frac{n \pp_j }{4\vv_j + n\mmu}$ is $\max_j \left(  \frac{4\vv_j  + n \mmu }{n  \mmu \pp_j }\right) \log\frac1\epsilon$. At the same time, iteration complexity of L-SVRG with $ \aalpha  = \min_j \frac{n}{4 \frac{\vv_j}{ \pp_j } + \frac{  \mmu n}{\probx}}$is $\max_j \left(  4 \frac{\vv_j}{n  \mmu \pp_j  }  + \frac{1}{ \probx } \right) \log\frac1\epsilon$.
\end{proposition}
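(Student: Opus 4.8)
The plan is to exhibit SAGA and L-SVRG as SEGA and SVRCD run on a \emph{lifted} problem in dimension $n\dd$, and then to obtain Proposition~\ref{prop:sagarate} by specializing Theorem~\ref{thm:sega_as} (in its block version, which is the instance of the general framework of Section~\ref{sec:analysis2} used to prove it). Concretely, I would introduce block variables $y=(y_1,\dots,y_n)\in\R^{n\dd}$ together with the block-separable function $\tilde f(y)\eqdef\frac1n\sum_{j=1}^n \ff_j(y_j)$, take $\Popt$ to be the consensus projector whose every $(j,k)$ block equals $\frac1n\mI_{\dd}$ (so that $\Range{\Popt}=\{(z,\dots,z):z\in\R^{\dd}\}$), and set the lifted regularizer equal to $\ppsi$ evaluated at the common block plus the indicator of $\Range{\Popt}$. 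With $y^0=(\xx^0,\dots,\xx^0)$, Assumption~\ref{ass:indicator} holds by construction, and the preceding Lemma forces every iterate onto the consensus subspace, so $y^k=(\xx^k,\dots,\xx^k)$ throughout.

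Next I would check Assumption~\ref{as:smooth_strongly_convex} for the lift, recording the precise constants, since this is where every $n$-factor is born. Block-separability gives $\mM$-smoothness with $\mM=\frac1n\blockdiag(\mmM_1,\dots,\mmM_n)$. On the consensus subspace $\tilde f(z,\dots,z)=\ff(z)$ is $\mmu$-strongly convex in $z$, but $\|(z,\dots,z)\|^2=n\|z\|^2$, so in the lifted Euclidean norm the strong-convexity constant is $\mu=\mmu/n$. Sampling a block $j$ with probability $\pp_j$ corresponds to $p=\pp$. I would then verify that the two algorithms coincide under the dictionary $h^k_j=\frac1n\mJ^k_{:,j}$ and $S\leftrightarrow\sS$: the SEGA update of $h^k$ becomes exactly \eqref{eq:saga_update} and the SVRCD update becomes exactly \eqref{eq:lsvrg_update}. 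The consensus projection inside $\prox$ averages the blocks of the lifted estimator $g^k$, and a short computation gives $\frac1n\sum_{j}g^k_j=\frac1n\ggg^k$; since the prescribed lifted stepsize works out to $n\aalpha$, the norm scaling makes the induced move on $\xx^k$ equal to the SAGA / L-SVRG step $\aalpha$.

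Finally I would reduce the lifted ESO to \eqref{eq:ESO_saga} and translate the rate. Expanding \eqref{eq:ESO_sega_good} blockwise, the matrix inside the expectation has $(j,k)$ block $\frac{\pp_{jk}}{n^2\pp_j\pp_k}\mmM_j^{\frac12}\mmM_k^{\frac12}$ with $\pp_{jk}\eqdef\Prob{j,k\in\sS}$, whereas the left-hand side of \eqref{eq:ESO_saga} is the quadratic form of the matrix $G$ with blocks $\pp_{jk}\mmM_j^{\frac12}\mmM_k^{\frac12}$; conjugating both by the block-diagonal matrix with blocks $\pp_j^{-\frac12}\mI$ shows the two constraints are identical under $v_j=\vv_j/n^2$. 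Substituting $\mu=\mmu/n$, $v=\vv/n^2$, $p=\pp$ into the stepsizes and complexities of Theorem~\ref{thm:sega_as} and clearing the powers of $n$ then yields precisely the stated SAGA and L-SVRG stepsizes and rates.

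The main obstacle is not any single hard inequality but the disciplined bookkeeping of the four distinct $n$-scalings (smoothness $\tfrac1n$, norm/strong-convexity $n$, stepsize $n$, ESO $n^2$), together with the check that the lifted $\prox$ really equals consensus-projection followed by $\proxop_{\aalpha\ppsi}$ and hence reproduces the finite-sum iterate. The reduction is conceptually clean but unforgiving in its constants, so the bulk of the work is confirming that all these factors cancel exactly rather than merely up to absolute constants.
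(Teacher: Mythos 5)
Your proposal is correct and is essentially the paper's own argument: the product-space lifting~\eqref{eq:equivalent} with the consensus projector $\Popt=\frac1n\ee\ee^\top\otimes\mI$, the dictionary $\mM=\frac1n\blockdiag(\mmM_1,\dots,\mmM_n)$, $\mu=\mmu/n$, $p=\pp$, $h^k=\frac1n\Vect{\mJ^k}$, $\alpha=n\aalpha$ established in Lemmas~\ref{lem:equivalent_objectives} and~\ref{lem:saga_from_sega}, followed by substitution into Theorem~\ref{thm:sega_as}, exactly as in Section~\ref{sec:sagasega}. One point where your bookkeeping improves on the text: your ESO scaling $v=\vv/n^2$ is the correct (and necessary) one, since the lifted matrix in~\eqref{eq:ESO_sega_good} has $(j,k)$ block $\frac{1}{n^2\pp_j\pp_k}\Prob{j\in\sS,\,k\in\sS}\,\mmM_j^{\frac12}\mmM_k^{\frac12}$ --- one factor $\frac1n$ coming from the two copies of $\mM^{\frac12}$ and a second from the quadratic form of $\Popt$ --- whereas Lemma~\ref{lem:equivalent_objectives} as printed claims $v=\vv n^{-1}$ (its proof drops the second $\frac1n$), a value under which the stepsize and complexity delivered by Theorem~\ref{thm:sega_as} would \emph{not} reduce to those stated in Proposition~\ref{prop:sagarate}.
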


\subsection{SAGA is a special case of SEGA}
Consider setup from Section~\ref{sec:saga}; i.e.,  problem~\eqref{eq:problem_finitesum} along with Assumption~\ref{as:finitesum} and $\vv$ defined according to~\eqref{eq:ESO_saga}. We will construct an instance of~\eqref{eq:problem} (i.e.,  specific $f$, $\psi$), which is equivalent to~\eqref{eq:problem_finitesum}, such that applying SEGA on~\eqref{eq:problem} is equivalent applying SAGA on~\eqref{eq:problem_finitesum}.

Let $d \eqdef \dd n$. For convenience, define $R_j \eqdef \{ \dd(j-1)+1,  \dd(j-1)+1, \dots,  \dd j  \}$ (i.e., $|R_j|= \dd$) and lifting operator $\Lift{\cdot}: \R^\dd \rightarrow \R^{d}$ defined as 
$\Lift{\xx} \eqdef \left [\underbrace{\xx^\top, \dots ,\xx^\top}_{n\, \mathrm{times}}\right]^\top$.

\paragraph{Construction of $f$, $\psi$.}  Let $\Ind{}$ be indicator function of the set\footnote{Indicator function of a set returns 0 for each point inside of the set and $\infty$ for each point outside of the set.} $x_{R_1}=\dots=x_{R_n}$ and choose
\begin{eqnarray}
f(x) \eqdef  \frac1n\sum_{j=1}^n \ff_j(x_{R_j}),  \;\; 
 \psi(x) \eqdef  \Ind{} (x) + \ppsi(x_{R_1})\label{eq:equivalent}
\end{eqnarray}
Now, it is easy to see that problem~\eqref{eq:problem_finitesum} and problem~\eqref{eq:problem} with the choice~\eqref{eq:equivalent} are equivalent; each $x\in \R^d$ such that $P(x)<\infty$ must be of the form $x = \Lift{\xx}$ for some $\xx\in \R^\dd$. In such case, we have $P(x) = \PP(\xx)$. The next lemma goes further, and derives the values $\mM, \mu, \Popt$ and $v$ based on $\mmM_i$ ($1\leq i\leq n$), $\mmu, \vv$.

\begin{lemma}\label{lem:equivalent_objectives}
Consider $f, \psi$ defined by~\eqref{eq:equivalent}. Function $f$ satisfies Assumption~\ref{as:smooth_strongly_convex} with $\mu \eqdef \frac{ \mmu}{n}$ and 
$\mM \eqdef \frac1n \blockdiag(\mmM_{1}, \dots, \mmM_n)$. Function $\psi$ and $x^0 = \Lift{\xx^0}$ satisfy Assumption with $\Popt \eqdef \frac1n \ee\ee^\top \otimes \mI$. At the same time, given that $\vv$ satisfies~\eqref{eq:ESO_saga}, inequaility~\eqref{eq:ESO_sega_good} holds with $v = \vv n^{-1}$. 
\end{lemma}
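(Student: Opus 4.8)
The plan is to verify each of the four claims separately by direct computation, exploiting the block structure induced by the lifting $\Lift{\cdot}$ and the partition $R_1,\dots,R_n$ of $\{1,\dots,d\}$ into $n$ blocks of size $\dd$. Throughout, I would identify a feasible point $x$ (one with $P(x)<\infty$) with the vector $\xx\in\R^\dd$ via $x=\Lift{\xx}$, since the indicator $\Ind{}$ forces $x_{R_1}=\dots=x_{R_n}$.

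\emph{Smoothness.} Because $f(x)=\frac1n\sum_j \ff_j(x_{R_j})$ is separable across the blocks $R_j$, its Hessian (or the quadratic upper bound) is block-diagonal. Each $\ff_j$ is $\mmM_j$-smooth, so the $j$-th diagonal block contributes $\frac1n\mmM_j$; hence $f$ is $\mM$-smooth with $\mM=\frac1n\blockdiag(\mmM_1,\dots,\mmM_n)$. I would write out the second-order bound $f(x)\le f(y)+\langle\nabla f(y),x-y\rangle+\frac12\|x-y\|_{\mM}^2$ and observe it decouples into the $n$ smoothness inequalities for the $\ff_j$, each scaled by $\frac1n$.

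\emph{Strong convexity and the projection $\Popt$.} On the feasible affine subspace $\{x^0+\Range{\Popt}\}$, every point satisfies $x_{R_1}=\dots=x_{R_n}=\xx$, so $f(\Lift{\xx})=\frac1n\sum_j\ff_j(\xx)=\ff(\xx)$. Thus $f$ restricted to this subspace equals $\ff$, which is $\mmu$-strongly convex in $\xx$. Translating the strong-convexity inequality back through the lift, I must account for the norm distortion: $\|\Lift{\xx}-\Lift{\yy}\|^2=n\|\xx-\yy\|^2$, which turns the factor $\frac{\mmu}{2}$ in $\xx$ into $\frac{\mmu}{2n}$ in $x$, giving $\mu=\frac{\mmu}{n}$. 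For $\Popt$, I would check that $\frac1n ee^\top\otimes\mI$ is a projection (idempotent and symmetric) and that its range is exactly the ``consistency'' subspace $\{x:x_{R_1}=\dots=x_{R_n}\}$ on which $\psi$ is finite, confirming Assumption~\ref{ass:indicator} holds with this $\Popt$ and $x^0=\Lift{\xx^0}$.

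\emph{The ESO transfer.} The remaining and most delicate step is showing that~\eqref{eq:ESO_saga} for $\vv$ implies~\eqref{eq:ESO_sega_good} for $v=\vv n^{-1}$. Here I would unfold the coordinate sampling $S$ of SEGA in terms of the finite-sum sampling $\sS$: selecting block index $j\in\sS$ corresponds to activating all coordinates in $R_j$ simultaneously, so the operator $\sum_{i\in S}\frac1{\pLi}\eLi\eLi^\top$ becomes block-structured, and conjugating by $\mM^{\frac12}=\frac1n\blockdiag(\mmM_j^{\frac12})$ and by $\Popt$ introduces the averaging $\frac1n ee^\top$ that couples the blocks. The hard part will be carefully expanding the expectation $\E{\sum_{i\in S}\frac1{\pLi}\eLi\eLi^\top\Popt\sum_{i\in S}\frac1{\pLi}\eLi\eLi^\top}$ and matching it, after the $\mM^{\frac12}$-conjugation, against $\diag(p^{-1}\circ v)$; the key is to test the resulting Loewner inequality against arbitrary vectors and recognize that~\eqref{eq:ESO_saga}, applied with $h_j=\mmM_j^{\frac12}(\cdot)$, is exactly the condition that emerges, with the factors of $n$ bookkeeping down to $v=\vv/n$. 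I expect the bulk of the effort to lie in this index-tracking and in correctly propagating the $\frac1n$ scalings through the three matrix factors.
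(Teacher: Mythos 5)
Your plan follows the paper's own proof essentially step for step: the paper likewise verifies smoothness and strong convexity by identifying feasible points with $\Lift{\xx}$ and tracking the factor-$n$ norm distortion (yielding $\mu = \frac{\mmu}{n}$ and $\mM = \frac1n\blockdiag(\mmM_1,\dots,\mmM_n)$), checks that $\frac1n\ee\ee^\top\otimes\mI$ is the projector onto the consistency subspace, and proves the ESO transfer exactly as you describe — testing the Loewner inequality against an arbitrary $h\in\R^d$, expanding the block-structured expectation so that $\Popt$ collapses the double sum into $\E{\|\sum_{i\in\sS}\pp_i^{-1}\mmM_i^{\frac12}h_{R_i}\|^2}$, and then invoking~\eqref{eq:ESO_saga} with $h_j \leftarrow \pp_j^{-1}h_{R_j}$ (note the $\mmM_j^{\frac12}$ factors come from the $\mM^{\frac12}$ conjugation and are already present in~\eqref{eq:ESO_saga}, so they are not part of the substitution). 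The approach is correct and, once the index bookkeeping you defer is carried out, it reproduces the paper's argument.
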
 

Next, we show that running Algorithm~\ref{alg:SEGAAS} in this particular setup is equivalent to running Algorithm~\ref{alg:saga} for the finite sum objective.

\begin{lemma}\label{lem:saga_from_sega}
Consider $f, \psi$ from~\eqref{eq:equivalent}, $S$ as described in the last paragraph and 
$x^0 =\Lift{\xx^0}$. Running SEGA (SVRCD) on~\eqref{eq:problem} with $S \eqdef \cup_{j\in \sS} R_j$ and $\alpha \eqdef n\aalpha$ is equivalent to running SAGA (L-SVRG) on~\eqref{eq:problem_finitesum}.; i.e.,  we have for all $k$  
\begin{equation}x^k = \Lift{\xx^k}.\label{eq:iterates_equivalence}\end{equation}
\end{lemma}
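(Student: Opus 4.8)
The plan is to prove the iterate equivalence \eqref{eq:iterates_equivalence} by induction on $k$, showing that the two algorithms stay in lockstep. The base case is immediate from the hypothesis $x^0 = \Lift{\xx^0}$. For the inductive step, I would assume $x^k = \Lift{\xx^k}$ (and, importantly, also track the equivalence of the control variates, i.e. $h^k$ for SEGA/SVRCD versus $\mJ^k$ for SAGA/L-SVRG) and then verify that one step of Algorithm~\ref{alg:SEGAAS} produces $x^{k+1} = \Lift{\xx^{k+1}}$.

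First I would establish the block structure of the gradient. Because $f(x) = \frac1n\sum_j \ff_j(x_{R_j})$ by \eqref{eq:equivalent}, the partial gradient $\nabla f(x)$ restricted to block $R_j$ is exactly $\frac1n \nabla \ff_j(x_{R_j})$, which under $x^k = \Lift{\xx^k}$ equals $\frac1n \nabla \ff_j(\xx^k)$. This is the key bridge: the $j$-th block of the lifted gradient estimator matches (up to the $\frac1n$ factor) the $j$-th component of the finite-sum estimator $\ggg^k$. I would then set up the dictionary between the control variates: the block $h^k_{R_j}$ corresponds to $\frac1n \mJ^k_{:,j}$, and check that the two update rules match under the sampling identification $S = \cup_{j\in\sS} R_j$ --- for SEGA the ``refresh the sampled coordinates'' rule \eqref{eq:saga_update} matches the SEGA update of $h$, and for SVRCD the coin-toss rule \eqref{eq:lsvrg_update} matches at the level of whole Jacobians with the same probability $\probx$. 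With the stepsize choice $\alpha = n\aalpha$, the factor of $n$ should precisely cancel the $\frac1n$ appearing in the block gradients, so that the unprojected update step $x^k - \alpha g^k$ has the form $\Lift{\text{(SAGA/L-SVRG gradient step)}}$ plus a possible deviation off the consensus subspace.

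The main step is then handling the proximal operator. Here I would use the structure $\psi(x) = \Ind{}(x) + \ppsi(x_{R_1})$ from \eqref{eq:equivalent}. The indicator $\Ind{}$ of the consensus set $\{x_{R_1}=\dots=x_{R_n}\}$ forces $\prox$ to project onto that subspace, and one should check that projecting $x^k - \alpha g^k$ onto consensus, then applying $\proxgjs$ on the common block, reproduces exactly the finite-sum update $\xx^{k+1} = \proxgjs(\xx^k - \aalpha \ggg^k)$. Concretely, I would argue that the prox of the separable-plus-indicator $\psi$ decomposes as: average the blocks (the projection induced by $\Ind{}$) and then apply $\proxgjs$. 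The averaging of the $n$ blocks of $\alpha g^k$ is where the control-variate/consensus bookkeeping must line up, since the ``plain'' part $\frac1{\ppp_i}(\nabla_i f - h_i^k)$ lives only on sampled blocks while $h^k$ contributes to all blocks; averaging should recover $\frac1n\sum_{i\in\sS}\frac1{\ppp_i}(\nabla\ff_i - \mJ^k_{:,i}) + \frac1n\mJ^k\ee$, which is precisely $\ggg^k$.

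The hard part will be getting the projection-then-prox decomposition of $\prox$ exactly right, and in particular verifying that after one consensus-projection step the iterate genuinely returns to the form $\Lift{\xx^{k+1}}$ with all $n$ blocks equal; this requires that the averaging of the sampled-block contributions, weighted by the $\frac1{\ppp_i}$ inverse-probability factors, collapses to the finite-sum estimator without leftover off-consensus components. I would lean on the first lemma of Section~\ref{sec:sega_is2} (that iterates remain in $\{x^0+\Range{\Popt}\}$, here the consensus subspace since $\Popt = \frac1n \ee\ee^\top\otimes\mI$ by Lemma~\ref{lem:equivalent_objectives}) to guarantee the projection is well-defined and that the induction hypothesis $x^k=\Lift{\xx^k}$ is self-consistent. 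Once the $k=0$ case and the gradient/control-variate/prox correspondences are in place, the equivalence \eqref{eq:iterates_equivalence} follows, and the matching of $\alpha = n\aalpha$ together with Lemma~\ref{lem:equivalent_objectives} ($v = \vv n^{-1}$, $\mu = \mmu/n$) confirms that the SEGA/SVRCD complexities of Theorem~\ref{thm:sega_as} reduce exactly to the SAGA/L-SVRG rates of Proposition~\ref{prop:sagarate}.
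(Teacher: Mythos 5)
Your proposal is correct and follows essentially the same route as the paper's proof: induction on $k$ tracking both the iterate identity $x^k=\Lift{\xx^k}$ and the control-variate dictionary $h^k = \frac1n\Vect{\mJ^k}$, with the key step being that $\proxop_{\alpha\psi}$ on the lifted problem amounts to averaging the $n$ blocks (the projection induced by $\Ind{}$, or equivalently the switch to the $\Popt$-norm inside the argmin, as the paper writes it) followed by $\proxop_{\aalpha\ppsi}$ on the common block, so that the averaged estimator $\frac1n\ggg^k$ combines with $\alpha=n\aalpha$ to reproduce the SAGA/L-SVRG step. The paper executes this same plan as a single chain of argmin identities, and closes the induction with the control-variate update exactly as you describe.
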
 

As a consequence of Lemmas~\ref{lem:equivalent_objectives} and~\ref{lem:saga_from_sega}, we get the next result.

\begin{corollary}\label{cor:saga_as2++}  Let $f, \psi, S$ be as described above. Convergence rate of SAGA (L-SVRG) given by Proposition~\ref{prop:sagarate} to solve~\eqref{eq:problem} is identical to convergence rate of SEGA (SVRCD) given by Theorem~\ref{thm:sega_as}.
\end{corollary}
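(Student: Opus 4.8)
The plan is to prove Corollary~\ref{cor:saga_as2++} by chaining together the two preceding lemmas, reducing the problem to a direct substitution of the translated parameters into the respective complexity formulas. The overall strategy is purely computational: Lemma~\ref{lem:saga_from_sega} guarantees that the iterate sequences coincide (via $x^k = \Lift{\xx^k}$), so the two algorithms generate identical trajectories and hence share the same convergence behavior; what remains is to verify that the rate \emph{expressions} from Theorem~\ref{thm:sega_as} and Proposition~\ref{prop:sagarate} agree once the dictionary from Lemma~\ref{lem:equivalent_objectives} is applied. That dictionary is $\mu = \mmu/n$, $v = \vv n^{-1}$ (coordinatewise, so $v_i = \vv_j/n$ where coordinate $i$ lies in block $R_j$), and the probabilities $\pLi$ for the lifted sampling $S = \cup_{j\in\sS} R_j$ equal the original block probabilities $\pp_j$.

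First I would treat the SEGA/SAGA case. Starting from the SEGA complexity $\max_i\left(\frac{4v_i + \mu}{\pLi \mu}\right)\log\frac1\epsilon$, I would substitute $v_i = \vv_j/n$, $\mu = \mmu/n$, and $\pLi = \pp_j$. The numerator becomes $4\vv_j/n + \mmu/n = (4\vv_j + n\mmu)/n$, the denominator becomes $\pp_j \cdot \mmu/n$, and the factors of $n$ combine to give exactly $\frac{4\vv_j + n\mmu}{n\mmu\pp_j}$, which is the SAGA rate in Proposition~\ref{prop:sagarate}. Since all $\dd$ coordinates within a fixed block $R_j$ carry the same $v_i$ and $\pLi$, the maximum over $i\in\{1,\dots,d\}$ reduces to the maximum over $j\in\{1,\dots,n\}$, so the two $\max$ operations coincide. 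I would also check that the stepsize recommendations match under $\alpha = n\aalpha$: substituting into $\alpha = \min_i \frac{\pLi}{4v_i+\mu}$ gives $\min_j \frac{\pp_j}{4\vv_j/n + \mmu/n} = n\min_j\frac{\pp_j}{4\vv_j + n\mmu}$, confirming consistency with $\aalpha = \min_j\frac{n\pp_j}{4\vv_j+n\mmu}$.

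Next I would repeat the identical substitution for the SVRCD/L-SVRG case. The SVRCD rate $\left(\frac{4\max_i(v_i\pLi^{-1}) + \mu\probx^{-1}}{\mu}\right)\log\frac1\epsilon$ becomes, after plugging in $v_i\pLi^{-1} = \vv_j/(n\pp_j)$ and $\mu = \mmu/n$, the expression $\frac{4\max_j(\vv_j/(n\pp_j)) + (\mmu/n)\probx^{-1}}{\mmu/n}$, and cancelling the shared $1/n$ and distributing gives $\max_j\left(\frac{4\vv_j}{n\mmu\pp_j} + \frac{1}{\probx}\right)\log\frac1\epsilon$, matching the L-SVRG rate. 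The coin-toss probability $\probx$ is shared between the two formulations (the lifted SVRCD and the original L-SVRG use the same Bernoulli parameter), so no translation is needed there.

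The main obstacle, and the only genuinely nontrivial step, is confirming that the maximum over the $d$ lifted coordinates collapses to the maximum over the $n$ blocks and that the per-coordinate-versus-per-block parameter identification is exactly right; this hinges on the block structure $R_j$ and on the fact that Lemma~\ref{lem:equivalent_objectives} delivers $v$ constant on each block with the precise constant $\vv_j/n$. Once that structural fact is in hand, the remaining arithmetic is a transparent clearing of factors of $n$, so the proof is essentially a verification that the two rate formulas are literally equal term by term. I would therefore organize the write-up as two short paragraphs, one per algorithm pair, each consisting of the substitution and the cancellation, and conclude that the rates are identical.
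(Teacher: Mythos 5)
Your overall strategy---chaining Lemma~\ref{lem:saga_from_sega} (trajectory equivalence $x^k=\Lift{\xx^k}$, $\alpha=n\aalpha$) with the parameter dictionary of Lemma~\ref{lem:equivalent_objectives} and then substituting into the two complexity formulas---is exactly the paper's intended argument; the paper offers nothing beyond ``as a consequence of the two lemmas.'' The problem is that your substitution, as written, is arithmetically false. With the dictionary you declare ($v_i=\vv_j/n$ for $i\in R_j$, $\mu=\mmu/n$, $\pLi=\pp_j$), one has
\begin{equation*}
4v_i+\mu \;=\; \frac{4\vv_j}{n}+\frac{\mmu}{n} \;=\; \frac{4\vv_j+\mmu}{n}
\qquad\text{and not}\qquad \frac{4\vv_j+n\mmu}{n},
\end{equation*}
so the translated SEGA complexity is $\max_j \frac{4\vv_j+\mmu}{\pp_j\mmu}\log\frac{1}{\epsilon}$, which exceeds the SAGA rate $\max_j \frac{4\vv_j+n\mmu}{n\pp_j\mmu}\log\frac{1}{\epsilon}$ by a factor of $n$ in the $\vv_j$ term. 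The same false step recurs in your SVRCD/L-SVRG paragraph (correctly cancelling the $1/n$'s gives $\max_j\frac{4\vv_j}{\mmu\pp_j}+\frac{1}{\probx}$, not $\max_j\frac{4\vv_j}{n\mmu\pp_j}+\frac{1}{\probx}$) and in your stepsize check ($\min_j\frac{\pp_j}{4\vv_j/n+\mmu/n}=\min_j\frac{n\pp_j}{4\vv_j+\mmu}$, which is not $n\aalpha$). So the proof does not close: the two rate expressions you claim are ``literally equal term by term'' are not equal under the dictionary you use.

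The resolution is that the corollary is consistent only with the block value $v_i=\vv_j/n^{2}$, and that is what a careful derivation in Lemma~\ref{lem:equivalent_objectives} actually delivers: writing $u$ for the vector with blocks $u_{R_j}=\pp_j^{-1}\mmM_j^{1/2}h_{R_j}$ for $j\in\sS$ (zero otherwise), the projection $\Popt=\frac1n\ee\ee^\top\otimes\mI$ gives $u^\top\Popt u=\frac1n\left\|\sum_{j\in\sS}u_{R_j}\right\|^2$, i.e.\ an extra $1/n$ on top of the $1/n$ coming from $\mM^{\frac12}=n^{-\frac12}\BD(\mmM)^{\frac12}$, and applying~\eqref{eq:ESO_saga} to the test vectors $\tilde h_j=\pp_j^{-1}h_{R_j}$ yields $\sum_j\vv_j\pp_j^{-1}\|h_{R_j}\|^2$; altogether~\eqref{eq:ESO_sega_good} holds with $v_i=\vv_j/n^2$, so the lemma's stated $v=\vv n^{-1}$ is itself off by a factor of $n$. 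With $v_i=\vv_j/n^2$ all of your computations go through exactly: $4v_i+\mu=\frac{4\vv_j+n\mmu}{n^2}$, the SEGA rate becomes $\max_j\frac{4\vv_j+n\mmu}{n\pp_j\mmu}$, the SVRCD rate becomes $\max_j\left(\frac{4\vv_j}{n\mmu\pp_j}+\frac{1}{\probx}\right)$, and $\alpha=\min_i\frac{\pLi}{4v_i+\mu}=n\aalpha$, consistent with Lemma~\ref{lem:saga_from_sega}. A correct write-up therefore has to either rederive the dictionary (obtaining $\vv/n^{2}$) or explicitly flag and repair the factor-$n$ inconsistency; silently forcing the target formula, as your draft does, hides a step that fails.
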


\section{Accelerated SVRCD \label{sec:acc}}
In this section we present SVRCD with Nesterov's momentum~\cite{nesterov1983method} -- ASVRCD. 
The development of ASVRCD along with the theory (Theorem~\ref{thm:acc}) was motivated by Katyusha~\cite{allen2017katyusha}, ASVRG~\cite{asvrg} and their loopless variants~\cite{kovalev2019don, lsvrgas}. In Section~\ref{sec:kat_special}, we show that a variant of L-Katyusha (Algorithm~\ref{alg:katyusha2}) is a special case of ASVRCD, and argue that it is slightly superior to the methods mentioned above.

The main component of ASVRCD is the gradient estimator $g^k$ constructed analogously to SVRCD. In particular, $g^k$ is an unbiased estimator of $\nabla f(x^k)$ controlled by $\nabla f(w^k)$:\footnote{This is efficient to implement as sequence of iterates $\{w^k\}$ is updated rarely.}
\begin{equation}
g^k = \nabla f(w^k)+\sum \limits_{i\in S}  \frac{1}{\pLi}(\nabla_i f(x^k) - \nabla_i f(w^k))\ones_i.
\end{equation}

Next, ASVRCD requires two more sequences of iterates $\{ y^k\}_{k\geq0}, \{ z^k\}_{k\geq 0}$ in order to incorporate Nesterov's momentum. The update rules of those sequences consist of subtracting $g^k$ alongside with convex combinations or interpolations of the iterates. See Algorithm~\ref{alg:acc} for specific formulas.  

\begin{algorithm}[h]
	\caption{Accelerated SVRCD (ASVRCD)}
	\label{alg:acc}
	\begin{algorithmic}
		\REQUIRE $0< \theta_1, \theta_2 <1$, $\eta, \beta , \gamma > 0$, $\probx \in (0,1)$, $y^0 = z^0 = x^0 \in \R^d$
		\FOR{$k=0,1,2,\ldots$}
			\STATE $x^k = \theta_1 z^k + \theta_2 w^k + ( 1 -\theta_1 -\theta_2) y^k$
        \STATE{Sample random  $S\subseteq \{1,2,\dots,d\}$}
        \STATE{$g^k = \nabla f(w^k)+\sum \limits_{i\in S}  \frac{1}{\pLi}(\nabla_i f(x^k) - \nabla_i f(w^k))\ones_i$}
			\STATE $y^{k+1} = \proxop_{\eta \psi} (x^k - \eta g^k)$
			\STATE $z^{k+1} = \beta z^k + (1-\beta)x^k + \frac{\gamma}{\eta}(y^{k+1} - x^k)$
			\STATE $w^{k+1} = \begin{cases}
				y^k, &\text{ with probability } \probx\\
				w^k, &\text{ with probability } 1-\probx\\
			\end{cases}$
		\ENDFOR
	\end{algorithmic}
\end{algorithm}

We are now ready to present ASVRCD along with its convergence guarantees.

\begin{theorem} \label{thm:acc}
Let Assumption~\ref{ass:indicator},~\ref{as:smooth_strongly_convex} hold and denote $L \eqdef \lambda_{\max} \left( \mM^{\frac12} \Popt \mM^{\frac12}\right)$.
Further, let $\cL$ be such that for all $k$ we have
	\begin{equation}\label{eq:exp:smooth}
		\E{\norm{g^k - \nabla f(x^k)}^2_\Popt} \leq 2\cL D_f(w^k,x^k).
	\end{equation}
 Define the following Lyapunov function:
	\begin{eqnarray*}
	\Psi^k &\eqdef&  \norm{z^k - x^*}^2 + \frac{2\gamma\beta}{\theta_1}\left[P(y^k) - P(x^*)\right] \\
	&&\qquad+ \frac{(2\theta_2 + \theta_1)\gamma\beta}{\theta_1\probx}\left[P(w^k) - P(x^*)\right],
	\end{eqnarray*}
and let
\begin{eqnarray*}
\eta &=&  \frac14 \max\{\cL, L\}^{-1}, \\
\theta_2 &=& \frac{\cL}{2\max\{L, \cL\}}, \\
\gamma &=& \frac{1}{\max\{2\mu, 4\theta_1/\eta\}},\\
 \beta &=& 1 - \gamma\mu \; \mathrm{and} \\
 \theta_1 &=& \min\left\{\frac{1}{2},\sqrt{\eta\mu \max\left\{\frac{1}{2}, \frac{\theta_2}{\rho}\right\}}\right\} .
\end{eqnarray*}
	Then the following inequality holds:
	\begin{equation*}
		\E{\Psi^{k+1}} \leq
		\left[1 -  \frac{1}{4}\min\left\{\probx, \sqrt{\frac{\mu}{2\max\left\{L, \frac{\cL}{\rho}\right\}}} \right\} \right]\Psi^0.
	\end{equation*}

	As a consequence, iteration complexity of Algorithm~\ref{alg:acc} is 
	$\cO\left( \left( \frac{1}{\probx} +  \sqrt{\frac{L}{\mu}}  + \sqrt{\frac{\cL}{\probx\mu}}  \right)\log\frac1\epsilon\right)$.
\end{theorem}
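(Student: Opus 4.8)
The plan is to establish a one-step Lyapunov contraction $\E{\Psi^{k+1} \mid \mathcal{F}_k} \leq (1-\delta)\Psi^k$ conditioned on the history $\mathcal{F}_k$, with $\delta = \frac14\min\{\rho, \sqrt{\mu/(2\max\{L,\cL/\rho\})}\}$, and then unroll this recursion (taking total expectations via the tower property) to obtain the displayed bound and hence the stated iteration complexity. Three facts drive the argument: the estimator is unbiased, $\E{g^k \mid \mathcal{F}_k} = \nabla f(x^k)$; its variance in the $\Popt$-norm is controlled by \eqref{eq:exp:smooth}; and, by the prox contraction \eqref{eq:prox_cont}, all iterates remain in $x^0 + \Range{\Popt}$, so every increment we square lives in $\Range{\Popt}$ and its $\mM$-norm is bounded by $L\norm{\cdot}^2$. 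This last point is exactly how \eqref{eq:prox_cont} converts a Euclidean difference of prox inputs (namely $\eta^2\norm{g^k-\nabla f(x^k)}^2$) into a $\Popt$-norm difference that \eqref{eq:exp:smooth} can absorb.

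First I would bound the distance term $\norm{z^{k+1}-x^*}^2$. Expanding the $z$-update and using the prox optimality condition for $y^{k+1} = \proxop_{\eta \psi}(x^k - \eta g^k)$ (a three-point inequality for $\psi$), I would produce an upper estimate consisting of a contracted distance term (with factor governed by $\beta = 1-\gamma\mu$), a cross term $\dotprod{\nabla f(x^k), \cdot}$ obtained after taking conditional expectation and invoking unbiasedness, the variance term $\E{\norm{g^k-\nabla f(x^k)}^2_\Popt}$, and $\psi$-value terms. The choice $\beta = 1-\gamma\mu$ together with the (quasi-)strong convexity \eqref{eq:sc} is what contracts the distance at rate $\mu$.

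Second I would establish descent on function values. Using $\mM$-smoothness together with the subspace bound $\norm{v}_\mM^2 \leq L\norm{v}^2$ for $v \in \Range{\Popt}$, and convexity of $f$, I would bound $\E{P(y^{k+1})}$ by a linearization at $x^k$ minus a multiple of the progress $\norm{y^{k+1}-x^k}^2$; this negative progress cancels the analogous positive term from the first step precisely when $\eta \leq \frac14\max\{L,\cL\}^{-1}$, the prescribed stepsize. The coin-toss update gives $\E{P(w^{k+1})-P(x^*) \mid \mathcal{F}_k} = \rho[P(y^k)-P(x^*)] + (1-\rho)[P(w^k)-P(x^*)]$, which accounts for the third Lyapunov term and is the source of the $1/\rho$ coefficient and of the $\rho$-branch in the contraction rate.

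Finally I would assemble the pieces. Applying \eqref{eq:exp:smooth} replaces the variance term by $2\cL D_f(w^k,x^k)$; bounding $D_f(w^k,x^k) = f(w^k)-f(x^k)-\dotprod{\nabla f(x^k), w^k - x^k}$ via convexity and the convex-combination identity $x^k = \theta_1 z^k + \theta_2 w^k + (1-\theta_1-\theta_2)y^k$ re-expresses it through the gaps $P(w^k)-P(x^*)$ and $P(y^k)-P(x^*)$ already tracked by $\Psi^k$. Substituting the prescribed $\eta,\theta_2,\gamma,\beta,\theta_1$ and verifying that all residual (non-Lyapunov) terms are non-positive collapses the weighted sum of the three estimates into $(1-\delta)\Psi^k$. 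I expect this final cancellation to be the main obstacle: the positive Bregman term $2\cL D_f(w^k,x^k)$ must be absorbed exactly by the negative progress terms coming from smoothness and from the $w$-tracking, and the particular forms $\theta_2 = \cL/(2\max\{L,\cL\})$ and the nested maxima in $\gamma$ and $\theta_1$ are engineered so that the signs and magnitudes line up. The iteration complexity then follows by bounding $1/\delta$.
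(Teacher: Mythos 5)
Your proposal follows essentially the same route as the paper's proof: a one-step contraction $\E{\Psi^{k+1}}\leq(1-\delta)\Psi^k$ built from expanding the $z$-update, the prox optimality (subgradient) condition for $y^{k+1}$, unbiasedness of $g^k$, the subspace $L$-smoothness bound, quasi-strong convexity combined with $\beta=1-\gamma\mu$, the coin-toss identity for $w^{k+1}$, and absorption of the $2\cL D_f(w^k,x^k)$ variance bound into the negative Bregman term generated by applying the coupling $x^k=\theta_1 z^k+\theta_2 w^k+(1-\theta_1-\theta_2)y^k$ at $w^k$ (the role of $\theta_2$ and $\eta$). Two cosmetic slips do not change this: the subspace invariance of the iterates is \eqref{eq:q_identity} (a consequence of Assumption~\ref{ass:indicator}) rather than \eqref{eq:prox_cont}, and $D_f(w^k,x^k)$ is not literally ``re-expressed through the $P$-gaps'' (that bound would fail) but cancelled against its negative copy coming from the $w$-coupling, which is exactly the mechanism your final sentence describes.
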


Convergence rate of ASVRCD depends on constant $\cL$ such that~\eqref{eq:exp:smooth} holds. The next lemma shows that $\cL$ can be obtained indirectly from $\mM$-smoothness (via $\ccL$), in which case the convergence rate provided by Theorem~\ref{thm:acc} significantly simplifies. 

\begin{lemma}\label{lem:exp_smooth_vs_ESO}
Inequality~\ref{eq:exp:smooth} holds for $\cL = \ccL$ (defined in~\eqref{eq:ccLdef}). Further, we have $L\leq \ccL$. Therefore, setting $ \probx \geq \sqrt{\frac{\mu}{\ccL}}$ yields the following complexity of ASVRCD: 
\begin{equation}\label{eq:rate_simple}
\cO\left( \sqrt{\frac{\ccL}{\probx\mu}} \log\frac1\epsilon\right).
\end{equation}
\end{lemma}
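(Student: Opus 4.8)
The plan is to prove the three assertions in sequence, the crux being the verification of~\eqref{eq:exp:smooth} with $\cL=\ccL$. First I would collapse the stochastic error into a single linear map. Writing $M_S\eqdef\sum_{i\in S}\frac{1}{\pLi}\eLi\eLi^\top$ for the diagonal random matrix implicit in the estimator and $a\eqdef\nabla f(x^k)-\nabla f(w^k)$, direct inspection of the definition of $g^k$ gives $g^k-\nabla f(x^k)=(M_S-\mI)a$. Since $\E{M_S}=\sum_i \pLi\cdot\frac{1}{\pLi}\eLi\eLi^\top=\mI$, this error has mean zero, and
\[
\E{\norm{g^k-\nabla f(x^k)}_\Popt^2}=a^\top\E{(M_S-\mI)\Popt(M_S-\mI)}a.
\]

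The key algebraic step is to expand $(M_S-\mI)\Popt(M_S-\mI)=M_S\Popt M_S-M_S\Popt-\Popt M_S+\Popt$ and take expectations: using $\E{M_S}=\mI$, the two cross terms cancel one copy of $\Popt$, leaving $\E{(M_S-\mI)\Popt(M_S-\mI)}=\E{M_S\Popt M_S}-\Popt$. Because $\Popt\succeq0$, dropping it gives $\E{\norm{g^k-\nabla f(x^k)}_\Popt^2}\leq a^\top\E{M_S\Popt M_S}a$. Now the definition~\eqref{eq:ccLdef} of $\ccL$ reads exactly $\mM^{\frac12}\E{M_S\Popt M_S}\mM^{\frac12}\preceq\ccL\mI$, equivalently $\E{M_S\Popt M_S}\preceq\ccL\mM^{-1}$, so $a^\top\E{M_S\Popt M_S}a\leq\ccL\norm{a}^2_{\mM^{-1}}$. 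Finally the smoothness-and-convexity consequence recorded just before the theorem (Lemma~\ref{lem:smooth_consequence}), in the form $\norm{a}^2_{\mM^{-1}}\leq2D_f(w^k,x^k)$, closes the chain and yields~\eqref{eq:exp:smooth} with $\cL=\ccL$.

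For $L\leq\ccL$ I would reuse the identity of the previous paragraph. Each realization $(M_S-\mI)\Popt(M_S-\mI)$ equals $\Popt$ conjugated by a symmetric matrix, hence is positive semidefinite, and the PSD cone is closed under expectation; therefore $\E{M_S\Popt M_S}-\Popt=\E{(M_S-\mI)\Popt(M_S-\mI)}\succeq0$, i.e.\ $\Popt\preceq\E{M_S\Popt M_S}$. Conjugating by $\mM^{\frac12}$ preserves the Loewner order, and passing to the top eigenvalue gives $L=\lambda_{\max}(\mM^{\frac12}\Popt\mM^{\frac12})\leq\lambda_{\max}(\mM^{\frac12}\E{M_S\Popt M_S}\mM^{\frac12})=\ccL$.

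For the simplified rate~\eqref{eq:rate_simple}, I would substitute $\cL=\ccL$ and $L\leq\ccL$ into the complexity $\cO((\frac{1}{\probx}+\sqrt{L/\mu}+\sqrt{\cL/(\probx\mu)})\log\frac1\epsilon)$ of Theorem~\ref{thm:acc} and show each term is dominated by $\sqrt{\ccL/(\probx\mu)}$: the middle term satisfies $\sqrt{L/\mu}\leq\sqrt{\ccL/\mu}\leq\sqrt{\ccL/(\probx\mu)}$ because $\probx\leq1$, while $\frac{1}{\probx}\leq\sqrt{\ccL/(\probx\mu)}$ is equivalent to $\probx\geq\mu/\ccL$, which follows from the hypothesis $\probx\geq\sqrt{\mu/\ccL}$ together with $\mu\leq\ccL$. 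The main obstacle is the variance computation of the second paragraph, namely spotting $\E{M_S}=\mI$ and the cancellation of the cross terms so that the squared error reduces \emph{exactly} to the quadratic form defining $\ccL$; once that identity is secured, both spectral inequalities and the final complexity bookkeeping are routine.
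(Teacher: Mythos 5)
Your proposal is correct and takes essentially the same route as the paper's proof: bound the variance by the second moment (your exact identity $\E{(M_S-\mI)\Popt(M_S-\mI)}=\E{M_S\Popt M_S}-\Popt$ followed by dropping $\Popt\succeq 0$ is precisely this step made explicit), then invoke the definition~\eqref{eq:ccLdef} of $\ccL$ and Lemma~\ref{lem:smooth_consequence}; for $L\leq\ccL$ both arguments are the same covariance-positivity fact, the paper phrasing it as $\E{\mQ\mQ^\top}\succeq\E{\mQ}\E{\mQ^\top}$ with $\mQ=M_S\Popt$ and you as $\E{(M_S-\mI)\Popt(M_S-\mI)}\succeq 0$. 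Your closing bookkeeping for~\eqref{eq:rate_simple}, which the paper leaves implicit, is also correct (using $\mu\leq L\leq\ccL$).
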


Setting $\cL = \ccL$ might be, however, loose in some cases. In particular, inequality~\eqref{eq:exp:smooth} is slightly weaker than~\eqref{eq:ESO_sega_good} and consequently, the bound bound from Theorem~\ref{thm:acc} is slightly better than~\eqref{eq:rate_simple}. To see this, notice that the proof of Lemma~\ref{lem:exp_smooth_vs_ESO} bounds variance of $g^k + \nabla f(w^k)$ by its second moment. Admittedly, this bound might not worsen the rate by more than a constant factor when $\frac{\E{|S|}}{d}$ is not close to 1. Therefore, bound~\eqref{eq:rate_simple} is good in essentially all practical cases. The next reason why we keep inequality~\eqref{eq:exp:smooth} is that an analogous assumption was required for the analysis of L-Katyusha in~\cite{lsvrgas} (see Section~\ref{sec:katyusha}) -- and so we can now recover L-Katyusha results directly.

Let us give a quick taste how the rate of ASVRCD behaves depending on $\Popt$. In particular, Lemma~\ref{lem:acc_example} shows that nontrivial $\Popt$ might lead to speedup of order $\Theta(\sqrt{d})$ for ASVRCD. 

\begin{lemma} \label{lem:acc_example}
Let $S = i$ for each $1\leq i\leq d$ with probability $\frac1d$ and $\probx = \frac1d$. Then, if $\Popt = \mI$, iteration complexity of ASVRCD is $\cO\left(d\sqrt{\frac{\lambda_{\max} \mM}{\mu}} \log \frac1\epsilon\right)$. If, however, $\Popt = \frac1d ee^\top$, iteration complexity of ASVRCD is $\cO\left(\sqrt{\frac{d\lambda_{\max} \mM}{\mu}} \log \frac1\epsilon\right)$.
\end{lemma}

\section{Connection between ASVRCD and L-Katyusha}
Next, we show that L-Katyusha can be seen as a particular case of ASVRCD. 

\subsection{Convergence rate of L-Katyusha\label{sec:katyusha}}
In this section, we quickly introduce the loopless Katyusha (L-Katyusha) from~\cite{lsvrgas} along with its convergence guarantees. In the next section, we show that an improved version of L-Katyusha can be seen as a special case of ASVRCD, and at the same time, the tight convergence guarantees from~\cite{lsvrgas} can be obtained as a special case of Theorem~\ref{thm:acc}.

Consider problem~\eqref{eq:problem_finitesum} and suppose that  $\ff $ is $\LL$-smooth and $\mmu$-strongly convex. Let $\sS$ be a random subset of $\{1, \dots ,n \}$ (sampled from arbitrary fixed distribution) such that $\ppp_i \eqdef \Prob{i\in \sS}$. For each $k$ let $\ggg^k$ be the following unbiased, variance reduced estimator of $\nabla \ff(x^k)$: 

\[
\ggg^k = \frac1n \left( \sum_{i\in \sS} \ppp_i^{-1} \left(\nabla \ff_i(\xx^k) - \nabla \ff_i(\ww^k) \right) \right) + \nabla \ff(\ww^k).
\]

Next, L-Katyusha requires the variance of $\ggg^k$ to be bounded by Bregman distance between $\ww^k$ and $\xx^k$ with constant $\cLL$, as the next assumption states.

\begin{assumption}\label{as:Katyusha} For all $k$ we have
\begin{equation}
\E{\| \ggg^k - \nabla\ff(\xx^k) \|^2} \leq 2 \cLL D_f(\ww^k,\xx^k).
\end{equation}
\end{assumption}
 Proposition~\ref{prop:katyusha} provides a convergence rate of L-Katyusha. 
\begin{proposition}\cite{lsvrgas} \label{prop:katyusha} Let $\ff $ be $\LL$-smooth and $\mmu$-strongly convex while Assumption~\ref{as:Katyusha} holds. 
Iteration complexity of L-Katyusha is $\cO\left( \left(\frac{1}{\pp} + \sqrt{\frac{\LL}{\mmu}} + \sqrt{\frac{\cLL}{\mmu \pp}} \right)\log \frac1\epsilon \right)$.
\end{proposition}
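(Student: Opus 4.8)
Since Proposition~\ref{prop:katyusha} is quoted verbatim from \cite{lsvrgas}, the shortest route is simply to invoke that reference. The plan I would instead follow---and which fits the machinery already developed here---is to derive the bound as a special case of Theorem~\ref{thm:acc} by \emph{lifting} the finite-sum problem \eqref{eq:problem_finitesum} into the coordinate setup \eqref{eq:problem}, exactly as in Section~\ref{sec:sagasega}. Concretely, I would set $d \eqdef \dd n$, use the lifting operator $\Lift{\cdot}$, and define $f$ and $\psi$ through \eqref{eq:equivalent} with $\psi$ carrying the indicator of the consensus set $x_{R_1} = \dots = x_{R_n}$. By Lemma~\ref{lem:equivalent_objectives} this yields $\mu = \mmu/n$ and $\Popt = \frac1n \ee\ee^\top \otimes \mI$, the projection onto the consensus subspace on which $f(\Lift{\xx}) = \ff(\xx)$.

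The core of the argument is a dictionary between the constants of ASVRCD and those of L-Katyusha. First, restricting to $\Range{\Popt}$, where $f$ agrees with $\ff$, gives $L = \lambda_{\max}(\mM^{\frac12}\Popt\mM^{\frac12}) = \LL/n$, so that $\sqrt{L/\mu} = \sqrt{\LL/\mmu}$. Second, translating Assumption~\ref{as:Katyusha} through the lifting---using that the ASVRCD estimator $g^k$ is the lift of $\ggg^k$, that $\|\cdot\|_\Popt$ implements the averaging over the $n$ blocks, and that $D_f(\Lift{\ww},\Lift{\xx}) = D_f(\ww,\xx)$---shows that \eqref{eq:exp:smooth} holds with $\cL = \cLL/n$. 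Since the coin-toss probability is unchanged, $\probx = \pp$, every ratio appearing in the complexity of Theorem~\ref{thm:acc}, namely $L/\mu$ and $\cL/(\probx\mu)$, is invariant under the uniform $1/n$ rescaling. Substituting into
\[
\cO\left(\left(\frac{1}{\probx} + \sqrt{\frac{L}{\mu}} + \sqrt{\frac{\cL}{\probx\mu}}\right)\log\frac1\epsilon\right)
\]
then reproduces $\cO\left(\left(\frac{1}{\pp} + \sqrt{\frac{\LL}{\mmu}} + \sqrt{\frac{\cLL}{\mmu\pp}}\right)\log\frac1\epsilon\right)$, which is the claim.

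To make the ``special case'' precise I would prove an accelerated analogue of Lemma~\ref{lem:saga_from_sega}: with $S \eqdef \cup_{j\in\sS} R_j$ and the step size scaled by $n$, the ASVRCD sequences $x^k, y^k, z^k, w^k$ coincide, block by block, with the corresponding L-Katyusha iterates, so the two algorithms generate identical trajectories and hence share the same iteration complexity. \textbf{The main obstacle} is precisely this equivalence at the level of the momentum: ASVRCD carries three interpolated sequences together with the five parameters $\eta,\theta_1,\theta_2,\gamma,\beta$ fixed by the formulas in Theorem~\ref{thm:acc}, and one must check that under the lifting these formulas collapse exactly to the parameter choices used by L-Katyusha---unlike the non-accelerated reduction of Section~\ref{sec:sagasega}, there is genuine bookkeeping here, and it is also where the proximal stepsize discrepancy noted in the introduction surfaces. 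Once this parameter matching is verified, Proposition~\ref{prop:katyusha} follows from Theorem~\ref{thm:acc} with no additional estimates.
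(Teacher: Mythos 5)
The paper offers no proof of Proposition~\ref{prop:katyusha}: it is an imported result, and the citation to \cite{lsvrgas} \emph{is} the proof. Your opening sentence (simply invoke the reference) therefore coincides exactly with the paper's treatment, and it is the only complete justification of the statement as it stands.

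The alternative derivation you sketch --- lift \eqref{eq:problem_finitesum} through \eqref{eq:equivalent}, apply Theorem~\ref{thm:acc}, and translate constants via $\mu = \mmu/n$, $L = \LL/n$, $\cL = \cLL/n$, $\probx = \pp$ --- is precisely the machinery of Section~\ref{sec:kat_special} (Lemma~\ref{lem:katyusha_from_asvrcd}), and your constants dictionary is correct. But it has a genuine gap as a proof of \emph{this} proposition: what ASVRCD reduces to under the lifting is Algorithm~\ref{alg:katyusha2}, a \emph{modified} L-Katyusha, not the L-Katyusha analyzed in \cite{lsvrgas}. The parameter matching you defer to the end (``once this parameter matching is verified'') cannot actually be completed, because the two algorithms genuinely differ: the lifted ASVRCD applies the proximal operator with a different (smaller) stepsize than L-Katyusha does --- exactly the discrepancy the paper flags when it notes that Algorithm~\ref{alg:katyusha2} ``uses a smaller stepsize for the proximal operator than L-Katyusha'' and is ``almost indistinguishable'' from it only when $\ppsi = 0$. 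Consequently the block-by-block trajectory equivalence you would need fails whenever $\ppsi \neq 0$, and the lifting argument yields the complexity of Algorithm~\ref{alg:katyusha2} (the corollary of Section~\ref{sec:kat_special}), not of L-Katyusha itself. The paper keeps these two statements deliberately separate: the rate of the variant is derived from Theorem~\ref{thm:acc}, while the rate of L-Katyusha proper --- the content of Proposition~\ref{prop:katyusha} --- rests on the citation. A self-contained proof of the proposition as stated would require reproducing the analysis of \cite{lsvrgas}; the reduction you propose proves a slightly different theorem about a slightly different algorithm.
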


\subsection{L-Katyusha is a special case of ASVRCD\label{sec:kat_special}}
In this section, we show that a modified version of L-Katyusha (Algorithm~\ref{alg:katyusha2}) is a special case of ASVRCD. Furthermore, we show that the tight convergence rate of L-Katyusha~\cite{lsvrgas} follows from Theorem~\ref{thm:acc} (convergence rate of ASVRCD).

Consider again $f,\psi$ chosen according to~\eqref{eq:equivalent}. With this choice, problem~\eqref{eq:problem} and~\eqref{eq:problem_finitesum} are equivalent. At the same time, Lemma~\ref{lem:saga_from_sega} establishes that $f$ satisfies Assumption~\ref{as:smooth_strongly_convex} with $\mu =\frac{ \mmu}{n}$ and $\mM = \frac1n \blockdiag(\mmM_{1}, \dots, \mmM_n)$ while $\psi$ and $x^0$ satisfy Assumption with $\Popt \eqdef \frac1n \ee\ee^\top \otimes \mI$.

 Note that the update rule of sequences $x^k, z^k, w^k$ are identical for both algorithms; we shall thus verify that the update rule on $y^k$ is identical as well. The last remaining thing is to relate $\cL$ and $\cLL$. The next lemma establishes both results.

\begin{lemma}\label{lem:katyusha_from_asvrcd}
Running ASVRCD on~\eqref{eq:problem} with $S \eqdef \cup_{j\in \sS} R_j$ and $\eta \eqdef n\eeta$, $\gamma \eqdef n\ggamma$ is equivalent to running Algorithm~\ref{alg:katyusha2} on~\eqref{eq:problem_finitesum}. At the same time, inequality~\ref{eq:exp:smooth} holds with $\cL = n^{-1}\cLL$, while we have $L = n^{-1}\LL$.
\end{lemma}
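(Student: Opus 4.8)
The lemma has two independent claims: an equivalence of iterates and a relation between the expected-smoothness constants. I would prove them separately, starting with the algorithmic equivalence.

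For the iterate equivalence, the plan is to verify that each update rule of ASVRCD, when run on the lifted problem~\eqref{eq:problem} with $S = \cup_{j\in\sS} R_j$, collapses exactly onto the corresponding update of Algorithm~\ref{alg:katyusha2}. The key observation (already established in Lemma~\ref{lem:saga_from_sega}) is that every finite iterate has the lifted form $x^k = \Lift{\xx^k}$, so all $n$ blocks coincide. I would proceed by induction on $k$, maintaining the invariant that $x^k, y^k, z^k, w^k$ are all of lifted form. The linear updates for $x^k$, $z^k$, and $w^k$ are coordinatewise affine combinations, so they preserve the lifted structure block-by-block and match the Katyusha updates directly once we use the scalings $\eta = n\eeta$ and $\gamma = n\ggamma$. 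The substantive check is the gradient estimator $g^k$: using the block structure of $\mM = \frac1n\blockdiag(\mmM_1,\dots,\mmM_n)$ and the fact that $S$ is a union of entire blocks $R_j$ for $j\in\sS$, I would show that the $R_j$-block of $g^k$ equals $\frac{1}{n}(\nabla\ff_j(\xx^k) - \nabla\ff_j(\ww^k))\ppp_j^{-1} + \frac1n\nabla\ff(\ww^k)$ up to the appropriate lifting, recovering $\Lift{\ggg^k}$ scaled by $n^{-1}$. The proximal step $y^{k+1} = \proxop_{\eta\psi}(x^k - \eta g^k)$ then requires unpacking the prox of $\psi = \Ind{} + \ppsi(x_{R_1})$: the indicator forces the output onto the lifted subspace (averaging the blocks), and on that subspace the prox reduces to $\proxop_{\eeta\ppsi}$ in the original $\R^\dd$ after the $n\eeta = \eta$ rescaling, which matches Algorithm~\ref{alg:katyusha2}.

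For the constant relation $\cL = n^{-1}\cLL$, I would compute both sides of the expected-smoothness inequality~\eqref{eq:exp:smooth} directly under the lifted identifications. Using $\norm{\cdot}^2_\Popt$ with $\Popt = \frac1n\ee\ee^\top\otimes\mI$ and the fact that $g^k - \nabla f(x^k)$ lies in the product space, the $\Popt$-seminorm on the left extracts (up to the $\frac1n$ averaging across blocks) exactly the quantity $\norm{\ggg^k - \nabla\ff(\xx^k)}^2$ in $\R^\dd$. On the right, the Bregman distance satisfies $D_f(w^k,x^k) = \frac1n D_\ff(\ww^k,\xx^k)$ by the definition~\eqref{eq:equivalent} of $f$ and the lifted forms. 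Matching the factors of $n$ from the left-hand expectation, the right-hand Bregman scaling, and Assumption~\ref{as:Katyusha} yields $\cL = n^{-1}\cLL$. The claim $L = n^{-1}\LL$ follows from the definition $L = \lambda_{\max}(\mM^{\frac12}\Popt\mM^{\frac12})$: substituting $\mM = \frac1n\blockdiag(\mmM_j)$ and $\Popt = \frac1n\ee\ee^\top\otimes\mI$, the Kronecker/block structure reduces the top eigenvalue to $n^{-1}$ times that of the averaged Jacobian data defining $\LL$.

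The main obstacle is the proximal step in the iterate equivalence. Because $\psi$ stacks a hard subspace-indicator on top of a smooth-part regularizer acting only on the first block, I must argue carefully that $\proxop_{\eta\psi}$ first projects onto $\Range{\Popt}$ (the lifted diagonal) and only then applies $\proxop_{\eeta\ppsi}$, and that this composition produces precisely the lifted Katyusha iterate rather than some other minimizer. Verifying this requires handling the interaction between the indicator and $\ppsi$ in the prox optimization rather than treating them independently; I would lean on the contractivity/projection property~\eqref{eq:prox_cont} from the earlier lemma and the explicit separability of the objective across blocks to justify the decomposition.
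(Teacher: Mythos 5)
Your plan for the iterate equivalence is essentially the paper's own proof: induction on $k$ maintaining the lifted form of $x^k,y^k,z^k,w^k$, with the linear updates matching under $\eta = n\eeta$, $\gamma = n\ggamma$ (so that $\gamma/\eta = \ggamma/\eeta$), and the prox step handled by re-running the chain of equalities~\eqref{eq:sequence} from Lemma~\ref{lem:saga_from_sega} with $\bigl(\nabla f(w^k), [\nabla \ff_1(\ww^k),\dots,\nabla\ff_n(\ww^k)]\bigr)$ in place of $(h^k,\mJ^k)$. One imprecision: the $R_j$-blocks of $g^k$ are \emph{not} all equal (block $j$ contains $\nabla\ff_j$, not an average), so $g^k$ is not $n^{-1}\Lift{\ggg^k}$; the identification with $\ggg^k$ emerges only after the projection inside the prox averages the blocks. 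Since you route the argument through the prox anyway, this is fixable.

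The genuine gap is in the constant computation. You claim $D_f(w^k,x^k) = \frac1n D_\ff(\ww^k,\xx^k)$, but this is false: for lifted points the two Bregman distances are \emph{equal}. Indeed $f(\Lift{\xx}) = \frac1n\sum_j \ff_j(\xx) = \ff(\xx)$, and the linear term matches as well, because the $\frac1n$ carried by each block of $\nabla f$ cancels against the sum over the $n$ identical blocks of $w^k-x^k$:
\[
\langle \nabla f(x^k), w^k - x^k\rangle \;=\; \sum_{j=1}^n \left\langle \tfrac1n \nabla \ff_j(\xx^k),\, \ww^k - \xx^k\right\rangle \;=\; \langle \nabla \ff(\xx^k), \ww^k - \xx^k\rangle,
\]
so $D_f(w^k,x^k) = D_\ff(\ww^k,\xx^k)$; this equality is exactly what the paper uses ($D_\ff(\ww^k,\xx^k) = \frac1n\sum_i D_{\ff_i}(w^k_{R_i},x^k_{R_i}) = D_f(w^k,x^k)$). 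Consequently, the entire factor $n^{-1}$ in $\cL = n^{-1}\cLL$ must come from the left-hand side alone, where the collapse of the $\Popt$-seminorm gives $\E{\norm{g^k - \nabla f(x^k)}^2_\Popt} = \frac1n \E{\norm{\ggg^k - \nabla \ff(\xx^k)}^2}$. If you combine your (correct) left-hand factor $\frac1n$ with your (incorrect) Bregman factor $\frac1n$, the two cancel and your computation yields $\cL = \cLL$, contradicting the very statement you are proving. The scaling principle to internalize: lifting leaves function values and Bregman divergences unchanged but multiplies squared norms by $n$; that is why the norm-to-Bregman constants $\cL$, $L$, $\mu$ each pick up exactly one factor of $n^{-1}$, with nothing hiding in $D_f$. (Relatedly, for $L$ the paper does not compute $\lambda_{\max}\bigl(\mM^{\frac12}\Popt\mM^{\frac12}\bigr)$ via the block structure as you propose; it shows directly that $f$ is $\frac{\LL}{n}$-smooth along $\Range{\Popt}$, which gives the bound $L \leq n^{-1}\LL$ actually needed — your identification of the block eigenvalue with $\LL$ is not exact, since $\LL$ is an assumed scalar smoothness constant of $\ff$ that may be smaller.)
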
 

As a direct consequence of Lemma~\ref{lem:katyusha_from_asvrcd} and Theorem~\ref{thm:acc}, we obtain the next corollary.

\begin{corollary}\label{cor:saga_as2++}  Let $f, \psi, S$ be as described above. Iteration complexity of Algorithm~\ref{alg:katyusha2} is \[\cO\left( \left(\frac{1}{\pp} + \sqrt{\frac{\LL}{\mmu}} + \sqrt{\frac{\cLL}{\mmu \pp}} \right)\log \frac1\epsilon \right).\]
\end{corollary}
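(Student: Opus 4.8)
The plan is to derive the claim by substituting the parameter correspondences from Lemma~\ref{lem:katyusha_from_asvrcd} into the iteration complexity of ASVRCD provided by Theorem~\ref{thm:acc}; no new estimates are needed once these two results are in hand.

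First I would verify that Theorem~\ref{thm:acc} applies to the lifted problem. Since $f,\psi$ are chosen according to~\eqref{eq:equivalent}, Lemma~\ref{lem:equivalent_objectives} shows Assumptions~\ref{ass:indicator} and~\ref{as:smooth_strongly_convex} hold, with $\mu = n^{-1}\mmu$ and $\Popt = \frac1n \ee\ee^\top \otimes \mI$. Theorem~\ref{thm:acc} therefore gives, for ASVRCD run on~\eqref{eq:problem} with $S = \cup_{j\in\sS}R_j$, the complexity
\[
\cO\left( \left( \frac{1}{\probx} + \sqrt{\frac{L}{\mu}} + \sqrt{\frac{\cL}{\probx\mu}} \right)\log\frac1\epsilon\right).
\]
By Lemma~\ref{lem:katyusha_from_asvrcd} this run produces exactly the iterates of Algorithm~\ref{alg:katyusha2}, so the two share this iteration complexity.

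Next I would collect the parameter identities. Lemma~\ref{lem:katyusha_from_asvrcd} supplies $\cL = n^{-1}\cLL$ and $L = n^{-1}\LL$, Lemma~\ref{lem:equivalent_objectives} supplies $\mu = n^{-1}\mmu$, and the coin toss updating $w^k$ in ASVRCD is the same event as the coin toss updating the reference point in Algorithm~\ref{alg:katyusha2}, so $\probx = \pp$. Substituting, the factors of $n$ cancel termwise:
\[
\frac{1}{\probx} = \frac{1}{\pp}, \qquad \sqrt{\frac{L}{\mu}} = \sqrt{\frac{\LL}{\mmu}}, \qquad \sqrt{\frac{\cL}{\probx\mu}} = \sqrt{\frac{\cLL}{\pp\,\mmu}},
\]
which yields the stated rate $\cO\left(\left(\frac{1}{\pp} + \sqrt{\frac{\LL}{\mmu}} + \sqrt{\frac{\cLL}{\mmu\pp}}\right)\log\frac1\epsilon\right)$.

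The arithmetic is trivial; the real content lies entirely in Lemma~\ref{lem:katyusha_from_asvrcd}, namely establishing that the lifted ASVRCD iterates coincide with those of Algorithm~\ref{alg:katyusha2} (including that the rescaling $\eta = n\eeta$, $\gamma = n\ggamma$ reconciles the two parameter conventions) and that the expected-smoothness constant transfers as $\cL = n^{-1}\cLL$. Granting that lemma, the corollary is pure bookkeeping, so I expect no genuine obstacle beyond confirming that the $n$-scaling is consistent across $L$, $\cL$, and $\mu$.
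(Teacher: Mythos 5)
Your proposal is correct and matches the paper's own argument: the paper presents this corollary precisely as a direct consequence of Lemma~\ref{lem:katyusha_from_asvrcd} (equivalence of iterates plus $\cL = n^{-1}\cLL$, $L = n^{-1}\LL$) and Theorem~\ref{thm:acc}, with the scaling $\mu = n^{-1}\mmu$ coming from the lifted-problem construction, so the factors of $n$ cancel exactly as you show. Nothing more is needed.
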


As promised, the convergence rate of Algorithm~\ref{alg:katyusha2} matches the convergence rate of L-Katyusha from Proposition~\ref{prop:katyusha} and thus matches the lower bound for finite sum minimization by~\cite{woodworth2016tight}. Let us now argue that Algorithm~\ref{alg:katyusha2} is slightly superior to other accelerated SVRG variants. 

First, Algorithm~\ref{alg:katyusha2} is loopless; thus has a simpler analysis and slightly better properties (as shown by~\cite{kovalev2019don}) over Katyusha~\cite{allen2017katyusha} and ASVRG~\cite{asvrg}. Next, the analysis is simpler than~\cite{lsvrgas} (i.e., we do not require one page of going through special cases). At the same time, Algorithm~\ref{alg:katyusha2} uses a smaller stepsize for the proximal operator than L-Katyusha, which is useful if the proximal operator does is estimated numerically. However, Algorithm~\ref{alg:katyusha2} is almost indistinguishable from L-Katyusha if $\ppsi=0$.

\begin{remark}
The convergence rate of L-Katyusha from~\cite{lsvrgas} allows exploiting the strong convexity of regularizer $\psi$ (given that it is strongly convex). While such a result is possible to obtain in our case, we have omitted it for simplicity. 
\end{remark}

\begin{algorithm}[h]
	\caption{Variant of L-Katyusha (special case of Algorithm~\ref{alg:acc})}
	\label{alg:katyusha2}
	\begin{algorithmic}
		\REQUIRE $0< \theta_1, \theta_2 <1$, $\eeta, \beta , \ggamma > 0$, $\probx \in (0,1)$
		\STATE $\yy^0 = \zz^0 = \xx^0 \in \R^\dd$
		\FOR{$k=0,1,2,\ldots$}
			\STATE $\xx^k = \theta_1 \zz^k + \theta_2 \ww^k + ( 1 -\theta_1 -\theta_2) \yy^k$
        \STATE{Sample random  $\sS\subseteq \{1,2,\dots,n\}$}
        \STATE{$g^k = \nabla \ff(\ww^k)+\sum \limits_{i\in \sS}  \frac{1}{\ppp_i}(\nabla \ff_i(\xx^k) - \nabla \ff_i(\ww^k)) $}
			\STATE $\yy^{k+1} = \proxop_{\eeta \psi} (\xx^k - \eeta \ggg^k)$
			\STATE $\zz^{k+1} = \beta \zz^k + (1-\beta)\xx^k + \frac{\ggamma}{\eeta}(\yy^{k+1} - \xx^k)$
			\STATE $\ww^{k+1} = \begin{cases}
				\yy^k, &\text{ with probability } \probx\\
				\ww^k, &\text{ with probability } 1-\probx\\
			\end{cases}$
		\ENDFOR
	\end{algorithmic}
\end{algorithm}

\section{Experiments \label{sec:experiments}}

In this section, we numerically verify the performance of ASVRCD, as well as the improved performance of SVRCD under Assumption~\ref{ass:indicator}. In order to better understand and control the experimental setup, we consider a quadratic minimization (four different types) over the unit ball intersected with a linear subspace.\footnote{Note that the practicality of ASVRCD immediately follows as it recovers Algorithm~\ref{alg:katyusha2} as a special case, which is (especially for $\psi=0$) almost indistinguishable to L-Katyusha -- state-of-the-art method for smooth finite sum minimization. For this reason, we decided to focus on less practical, but better-understood experiments.} The specific choice of the objective is presented in Section~\ref{sec:exp_choice} of the Appendix.

In the first experiment we demonstrate the superiority of ASVRCD to SVRCD for problems with $\Popt = \mI$. We consider four different methods -- ASVRCD and SVRCD, both with uniform and importance sampling such that $|S|=1$ with probability 1. The importance sampling is the same as one from~\cite{hanzely2019one}. In short, the goal is to have $\ccL$ from~\eqref{eq:ccLdef} as small as possible. Using $\Popt = \mI$, it is easy to see that $\ccL = \lambda_{\max} \left( \diag(p)^{-\frac12}\mM  \diag(p)^{-\frac12}\right)$. While the optimal $p$ is still hard to find, we set $p_i\propto \mM_{i,i}$ (i.e., the effect of importance sampling is the same as the effect of Jacobi preconditioner). Figure~\ref{fig:identity} shows the result. As expected, accelerated SVRCD always outperforms non-accelerated variant, while at the same time, the importance sampling improves the performance too.

\begin{figure}[!h]
\centering
\begin{minipage}{0.33\textwidth}
  \centering
\includegraphics[width =  \textwidth ]{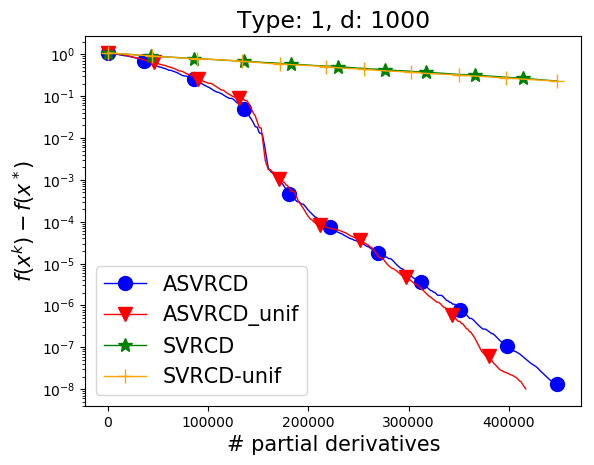}
\end{minipage}%
\begin{minipage}{0.33\textwidth}
  \centering
\includegraphics[width =  \textwidth ]{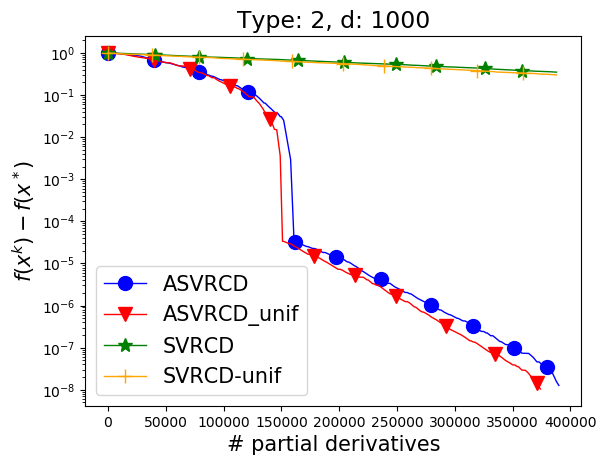}
\end{minipage}%
\\
\begin{minipage}{0.33\textwidth}
  \centering
\includegraphics[width =  \textwidth ]{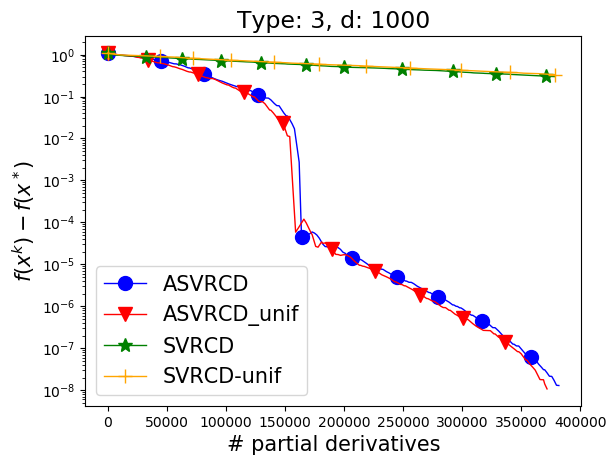}
\end{minipage}%
\begin{minipage}{0.33\textwidth}
  \centering
\includegraphics[width =  \textwidth ]{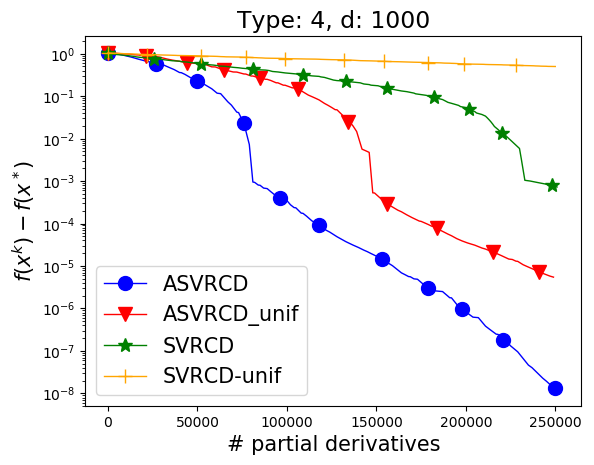}
\end{minipage}%
\caption{Comparison of both ASVRCD and SVRCD with importance and uniform sampling.} 
\label{fig:identity}
\end{figure}

The second experiment compares the performance of both ASVRCD and SVRCD for various $\mW$. We only consider methods with the importance sampling ($p_i\propto \mM_{i,i} \mW_{i,i}$) and theory supported stepsize. Figure~\ref{fig:variable} presents the result.  We see that the smaller $\Range{\mW}$ is, the faster the convergence is. This observation is well-aligned with our theory: $\ccL$ is increasing as a function of $\mW$ (in terms of Loewner ordering).

\begin{figure}[!h]
\centering
\begin{minipage}{0.33\textwidth}
  \centering
\includegraphics[width =  \textwidth ]{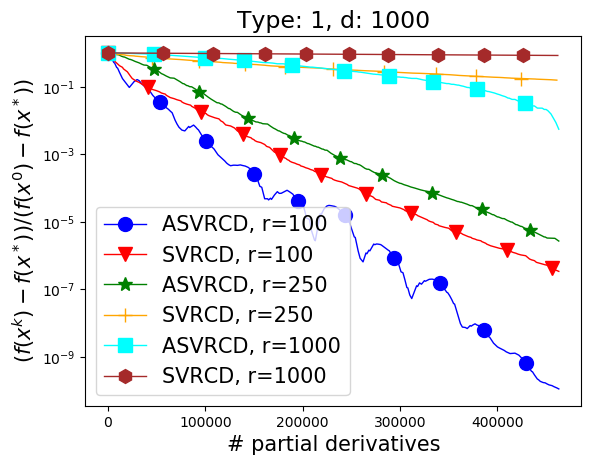}
\end{minipage}%
\begin{minipage}{0.33\textwidth}
  \centering
\includegraphics[width =  \textwidth ]{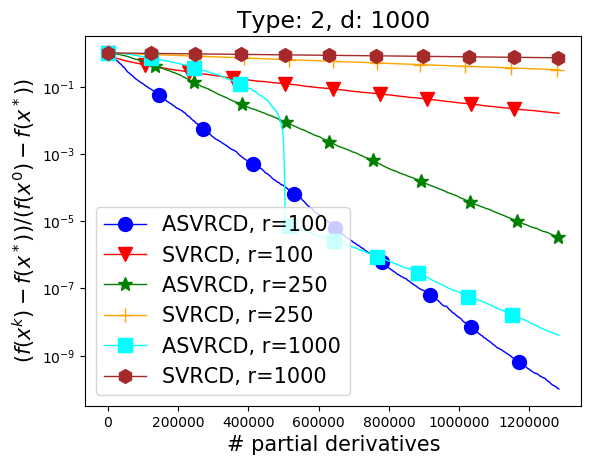}
\end{minipage}%
\\
\begin{minipage}{0.33\textwidth}
  \centering
\includegraphics[width =  \textwidth ]{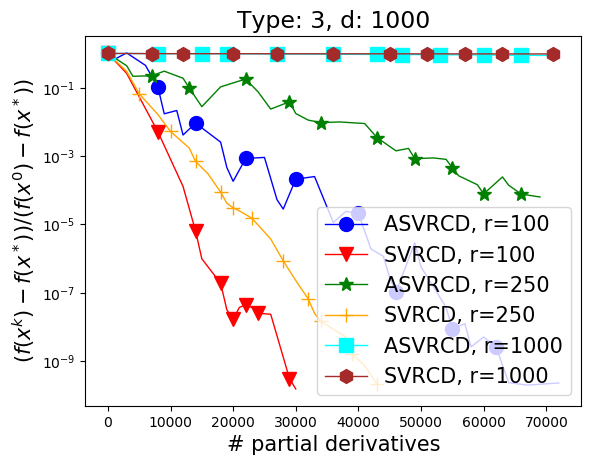}
\end{minipage}%
\begin{minipage}{0.33\textwidth}
  \centering
\includegraphics[width =  \textwidth ]{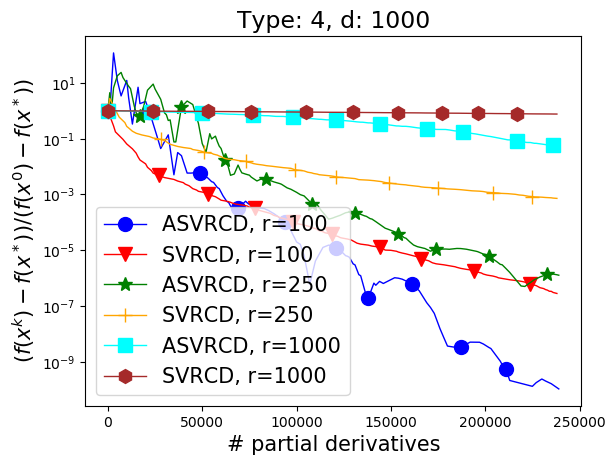}
\end{minipage}%
\caption{Comparison of ASVRCD and SVRCD for various $\Popt$. Label 'r' indicates the dimension of $\Range{\Popt}$.} 
\label{fig:variable}
\end{figure}

\section{Implications}

\paragraph{Finite sum algorithms are a special case of methods with partial derivative oracle.} Using the trick described in Sections~\ref{sec:sagasega} and~\ref{sec:kat_special}, it is possible to show that essentially any finite-sum stochastic algorithm is a special case of analogous method with partial derivative oracle (those are yet to be discovered/analyzed) in a given setting (i.e.,  strongly convex, convex, non-convex). Those include, but are not limited to SGD~\cite{robbins, nemirovski2009robust}, over-parametrized SGD~\cite{vaswani2018fast}, SAG~\cite{roux2012stochastic}, SVRG~\cite{johnson2013accelerating}, S2GD~\cite{konevcny2013semi}, SARAH~\cite{nguyen2017sarah}, incremental methods such as Finito~\cite{defazio2014finito}, MISO~\cite{mairal2015incremental} or accelerated algorithms such as point-SAGA~\cite{defazio2016simple}, Katyusha~\cite{allen2017katyusha}, MiG~\cite{zhou2018simple}, SAGA-SSNM~\cite{zhou2018direct}, Catalyst~\cite{lin2015universal, kulunchakov2019generic}, non-convex variance reduced algorithms~\cite{reddi2016stochastic, allen2016variance, fang2018spider} and others. In particular, SGD can be seen as a special case of block coordinate descent, while SAG is a special case of bias-SEGA from~\cite{hanzely2018sega} (neither of CD with non-separable prox, nor bias-SEGA were analyzed yet).

\paragraph{Zero order optimization with non-separable non-smooth regularizer.} We believe it would be interesting to develop an inexact version of ASVRCD, as this would immediately enable the application in zero-order optimization, where the partial derivatives are (inexactly) estimated using finite differences.

\subsection*{Acknowledgments}
The authors would like to express their gratitude to Konstantin Mishchenko. In particular, Konstantin has introduced us to the product space objective~\eqref{eq:equivalent} and at the same time, the idea that SAGA is a special case of SEGA was born during the discussion with him.

\bibliographystyle{plain}
 \bibliography{literature}

\begin{thebibliography}{10}

\bibitem{allen2017katyusha}
Zeyuan Allen-Zhu.
\newblock Katyusha: The first direct acceleration of stochastic gradient
  methods.
\newblock {\em The Journal of Machine Learning Research}, 18(1):8194--8244,
  2017.

\bibitem{allen2016variance}
Zeyuan Allen-Zhu and Elad Hazan.
\newblock Variance reduction for faster non-convex optimization.
\newblock In {\em International conference on machine learning}, pages
  699--707, 2016.

\bibitem{allen2016even}
Zeyuan Allen-Zhu, Zheng Qu, Peter Richt{\'a}rik, and Yang Yuan.
\newblock Even faster accelerated coordinate descent using non-uniform
  sampling.
\newblock In {\em International Conference on Machine Learning}, pages
  1110--1119, 2016.

\bibitem{defazio2016simple}
Aaron Defazio.
\newblock A simple practical accelerated method for finite sums.
\newblock In {\em Advances in neural information processing systems}, pages
  676--684, 2016.

\bibitem{defazio2014saga}
Aaron Defazio, Francis Bach, and Simon Lacoste-Julien.
\newblock Saga: A fast incremental gradient method with support for
  non-strongly convex composite objectives.
\newblock In {\em Advances in neural information processing systems}, pages
  1646--1654, 2014.

\bibitem{defazio2014finito}
Aaron Defazio, Justin Domke, et~al.
\newblock Finito: A faster, permutable incremental gradient method for big data
  problems.
\newblock In {\em International Conference on Machine Learning}, pages
  1125--1133, 2014.

\bibitem{fang2018spider}
Cong Fang, Chris~Junchi Li, Zhouchen Lin, and Tong Zhang.
\newblock Spider: Near-optimal non-convex optimization via stochastic
  path-integrated differential estimator.
\newblock In {\em Advances in Neural Information Processing Systems}, pages
  689--699, 2018.

\bibitem{gazagnadou2019optimal}
Nidham Gazagnadou, Robert~M Gower, and Joseph Salmon.
\newblock Optimal mini-batch and step sizes for {SAGA}.
\newblock In {\em International conference on machine learning}, 2019.

\bibitem{pmlr-v97-qian19b}
Robert~Mansel Gower, Nicolas Loizou, Xun Qian, Alibek Sailanbayev, Egor
  Shulgin, and Peter Richt{\'a}rik.
\newblock {SGD}: General analysis and improved rates.
\newblock In {\em Proceedings of the 36th International Conference on Machine
  Learning}, volume~97 of {\em Proceedings of Machine Learning Research}, pages
  5200--5209, 09--15 Jun 2019.

\bibitem{gutman2019condition}
David~H Gutman and Javier~F Pena.
\newblock The condition number of a function relative to a set.
\newblock {\em arXiv preprint arXiv:1901.08359}, 2019.

\bibitem{hanzely2018sega}
Filip Hanzely, Konstantin Mishchenko, and Peter Richt{\'a}rik.
\newblock Sega: Variance reduction via gradient sketching.
\newblock In {\em Advances in Neural Information Processing Systems}, pages
  2082--2093, 2018.

\bibitem{hanzely2018accelerated}
Filip Hanzely and Peter Richt{\'a}rik.
\newblock Accelerated coordinate descent with arbitrary sampling and best rates
  for minibatches.
\newblock In {\em International Conference on Artificial Intelligence and
  Statistics}, 2018.

\bibitem{hanzely2019one}
Filip Hanzely and Peter Richt{\'a}rik.
\newblock One method to rule them all: Variance reduction for data, parameters
  and many new methods.
\newblock {\em arXiv preprint arXiv:1905.11266}, 2019.

\bibitem{hickernell2005control}
Fred~J Hickernell, Christiane Lemieux, Art~B Owen, et~al.
\newblock Control variates for quasi-monte carlo.
\newblock {\em Statistical Science}, 20(1):1--31, 2005.

\bibitem{hofmann2015variance}
Thomas Hofmann, Aurelien Lucchi, Simon Lacoste-Julien, and Brian McWilliams.
\newblock Variance reduced stochastic gradient descent with neighbors.
\newblock In {\em Advances in Neural Information Processing Systems}, pages
  2305--2313, 2015.

\bibitem{johnson2013accelerating}
Rie Johnson and Tong Zhang.
\newblock Accelerating stochastic gradient descent using predictive variance
  reduction.
\newblock In {\em Advances in neural information processing systems}, pages
  315--323, 2013.

\bibitem{konevcny2013semi}
Jakub Kone{\v{c}}n{\'y} and Peter Richt{\'a}rik.
\newblock Semi-stochastic gradient descent methods.
\newblock {\em Frontiers in Applied Mathematics and Statistics}, 3:9, 2017.

\bibitem{kovalev2019don}
Dmitry Kovalev, Samuel Horv\'{a}th, and Peter Richt\'{a}rik.
\newblock Don’t jump through hoops and remove those loops: {SVRG} and
  {K}atyusha are better without the outer loop.
\newblock In {\em Proceedings of the 31st International Conference on
  Algorithmic Learning Theory}, 2020.

\bibitem{kozak2019stochastic}
David Kozak, Stephen Becker, Alireza Doostan, and Luis Tenorio.
\newblock Stochastic subspace descent.
\newblock {\em arXiv preprint arXiv:1904.01145}, 2019.

\bibitem{kulunchakov2019generic}
Andrei Kulunchakov and Julien Mairal.
\newblock A generic acceleration framework for stochastic composite
  optimization.
\newblock In {\em Advances in Neural Information Processing Systems}, pages
  12556--12567, 2019.

\bibitem{lin2015universal}
Hongzhou Lin, Julien Mairal, and Zaid Harchaoui.
\newblock A universal catalyst for first-order optimization.
\newblock In {\em Advances in neural information processing systems}, pages
  3384--3392, 2015.

\bibitem{mairal2015incremental}
Julien Mairal.
\newblock Incremental majorization-minimization optimization with application
  to large-scale machine learning.
\newblock {\em SIAM Journal on Optimization}, 25(2):829--855, 2015.

\bibitem{nemirovski2009robust}
Arkadi Nemirovski, Anatoli Juditsky, Guanghui Lan, and Alexander Shapiro.
\newblock Robust stochastic approximation approach to stochastic programming.
\newblock {\em SIAM Journal on optimization}, 19(4):1574--1609, 2009.

\bibitem{nesterov2012efficiency}
Yu~Nesterov.
\newblock Efficiency of coordinate descent methods on huge-scale optimization
  problems.
\newblock {\em SIAM Journal on Optimization}, 22(2):341--362, 2012.

\bibitem{nesterov2017efficiency}
Yurii Nesterov and Sebastian~U Stich.
\newblock Efficiency of the accelerated coordinate descent method on structured
  optimization problems.
\newblock {\em SIAM Journal on Optimization}, 27(1):110--123, 2017.

\bibitem{nesterov1983method}
Yurii~E Nesterov.
\newblock A method for solving the convex programming problem with convergence
  rate o (1/k\^{} 2).
\newblock In {\em Dokl. akad. nauk Sssr}, volume 269, pages 543--547, 1983.

\bibitem{nguyen2017sarah}
Lam~M Nguyen, Jie Liu, Katya Scheinberg, and Martin Tak{\'a}{\v{c}}.
\newblock Sarah: A novel method for machine learning problems using stochastic
  recursive gradient.
\newblock In {\em Proceedings of the 34th International Conference on Machine
  Learning-Volume 70}, pages 2613--2621. JMLR. org, 2017.

\bibitem{lsvrgas}
Xun Qian, Zheng Qu, and Peter Richt{\'a}rik.
\newblock L-svrg and {L}-{K}atyusha with arbitrary sampling.
\newblock {\em arXiv preprint arXiv:1906.01481}, 2019.

\bibitem{qian2019saga}
Xun Qian, Zheng Qu, and Peter Richt{\'a}rik.
\newblock {SAGA} with arbitrary sampling.
\newblock {\em arXiv preprint arXiv:1901.08669}, 2019.

\bibitem{reddi2016stochastic}
Sashank~J Reddi, Ahmed Hefny, Suvrit Sra, Barnabas Poczos, and Alex Smola.
\newblock Stochastic variance reduction for nonconvex optimization.
\newblock In {\em International conference on machine learning}, pages
  314--323, 2016.

\bibitem{richtarik2014iteration}
Peter Richt{\'a}rik and Martin Tak{\'a}{\v{c}}.
\newblock Iteration complexity of randomized block-coordinate descent methods
  for minimizing a composite function.
\newblock {\em Mathematical Programming}, 144(1-2):1--38, 2014.

\bibitem{robbins}
H.~Robbins and S.~Monro.
\newblock A stochastic approximation method.
\newblock {\em The Annals of Mathematical Statistics}, page 400–407, 1951.

\bibitem{roux2012stochastic}
Nicolas~L Roux, Mark Schmidt, and Francis~R Bach.
\newblock A stochastic gradient method with an exponential convergence rate for
  finite training sets.
\newblock In {\em Advances in neural information processing systems}, pages
  2663--2671, 2012.

\bibitem{asvrg}
Fanhua Shang, Licheng Jiao, Kaiwen Zhou, James Cheng, Yan Ren, and Yufei Jin.
\newblock Asvrg: Accelerated proximal svrg.
\newblock In {\em Proceedings of The 10th Asian Conference on Machine
  Learning}, 2018.

\bibitem{vaswani2018fast}
Sharan Vaswani, Francis Bach, and Mark Schmidt.
\newblock Fast and faster convergence of sgd for over-parameterized models and
  an accelerated perceptron.
\newblock In {\em International Conference on Artificial Intelligence and
  Statistics}, 2018.

\bibitem{woodworth2016tight}
Blake~E Woodworth and Nati Srebro.
\newblock Tight complexity bounds for optimizing composite objectives.
\newblock In {\em Advances in neural information processing systems}, pages
  3639--3647, 2016.

\bibitem{wright2015coordinate}
Stephen~J Wright.
\newblock Coordinate descent algorithms.
\newblock {\em Mathematical Programming}, 151(1):3--34, 2015.

\bibitem{zhou2018direct}
Kaiwen Zhou.
\newblock Direct acceleration of saga using sampled negative momentum.
\newblock In {\em International Conference on Artificial Intelligence and
  Statistics}, 2018.

\bibitem{zhou2018simple}
Kaiwen Zhou, Fanhua Shang, and James Cheng.
\newblock A simple stochastic variance reduced algorithm with fast convergence
  rates.
\newblock In {\em The 35th International Conference on Machine Learning}, 2018.

\end{thebibliography}
 \clearpage
 \appendix

\section{SAGA and L-SVRG: The algorithm}
\begin{algorithm}[!h]
	\caption{SAGA/L-SVRG}
	\label{alg:saga}
	\begin{algorithmic}
		\REQUIRE $\alpha > 0$, $\probx \in (0,1)$
		\STATE $\xx^0 \in \R^\dd, \mJ^{0} = 0 \in \R^{d\times n}$
		\FOR{$k=0,1,2,\ldots$}
		\STATE Sample random $\sS \subseteq \{1,\dots,n \}$
        \STATE{$\ggg^k = \frac1n \mJ^k \ee+ \frac1n\sum \limits_{i\in \sS}  \frac{1}{\ppp_i}(\nabla \ff_i(\xx^k) - \mJ^k_{:,i}) $}
			\STATE $\xx^{k+1} = \proxop_{\aalpha \psi} (\xx^k - \aalpha \ggg^k)$
        \STATE{Update $\mJ^{k+1}$ according to~\eqref{eq:saga_update} or~\eqref{eq:lsvrg_update}}
		\ENDFOR
	\end{algorithmic}
\end{algorithm}

 \section{Missing lemmas and proofs: SAGA/L-SVRG is a special case of SEGA/SVRCD} 
 \subsection{Proof of Lemma~\ref{lem:equivalent_objectives} \label{sec:equivalent_objectivesproof}}
 Let $\Popt' \eqdef \frac1n \ee \ee^\top \otimes \mI$ and denote $\BD(\mmM)\eqdef \blockdiag(\mmM_{1}, \dots, \mmM_n)$ for simplicity.  Now clearly $x^0 \in \Range{\Popt'}$, while $\Popt'$ is a projection matrix such that $\Ind{}(x)<\infty $ if and only if $\Popt'x=x$. Consequently, $\Popt=\Popt'$. 
 Next, if $x,y \in \Range{\Popt}$, there is $\xx, \yy \in \R^\dd$ such that $x = \Lift{\xx}, y = \Lift{\yy}$. Therefore we can write
\begin{eqnarray*}
f(x) = f(\Popt(x)) = \frac1n \sum_{j=1}^n \ff_j(\xx)& \geq&  \frac1n \sum_{j=1}^n \ff_j(\yy) + \< \nabla \left( \frac1n \sum_{j=1}^n \ff_j(\yy)\right), \xx-\yy  >  + \frac{\mmu}{2} \| \xx-\yy \|^2
\\
&=&
 f(y)+ \< \nabla f(y), x-y  >  + \frac{\mmu}{2n} \| x-y\|^2.
\end{eqnarray*}

Similarly,
\begin{eqnarray*}
f(x) = \frac1n \sum_{j=1}^n \ff_j(\xx)& \leq&  \frac1n \sum_{j=1}^n \ff_j(\yy) + \< \nabla \left( \frac1n \sum_{j=1}^n\ff_j(\yy)\right), \xx-\yy  >  + \sum_{j=1}^n \frac{1}{2n} \| \xx-\yy \|^2_{\mmM_j}
\\
&=&
 f(y)+ \< \nabla f(y), x-y  >  + \frac{1}{2n} \|x-y\|^2_{\BD(\mmM)}.
\end{eqnarray*}

Thus we conclude $\mu = \frac{ \mmu}{n}$ and $\mM = \frac1n\BD(\mmM)$.  Further, for any $h\in \R^d$, we have:
\begin{eqnarray*}
&& h^\top \mM^{\frac12} \E{  \sum_{i\in S}\pLi^{-1}\eLi \eLi^\top\Popt  \sum_{i\in S} \pLi^{-1}\eLi \eLi^\top}\mM^{\frac12}  {h}  
\\
&& \qquad  =
\frac1n h^\top \BD(\mmM)^{\frac12} \E{ \left( \sum_{i\in \sS}\pp_i^{-1}\left(\sum_{j\in R_i}e_j e_j^\top\right)\right)\Popt  \left( \sum_{i\in \sS}\pp_i^{-1}\left(\sum_{j\in R_i}e_j e_j^\top\right)\right)} \BD(\mmM)^{\frac12}  {h}
\\
&& \qquad =
 \frac{1}{n}\E{\left\| \sum_{i\in \sS }\mmM^{\frac12}_i  \pp_i^{-1}h_{R_i}   \right\|^2}
 \\
&& \qquad \stackrel{\eqref{eq:ESO_saga}}{\leq }
 \frac{1}{n}\sum_{i=1}^{n} \pp_i \vv_{i}\left\|h_{R_{i}}\right\|^{2}
\end{eqnarray*}

and thus~\eqref{eq:ESO_sega_good} holds with $v= \frac1n \vv$ as desired.

 \subsection{Proof of Lemma~\ref{lem:saga_from_sega} \label{sec:sagasegaproof}}
Denote $\Vect{\cdot}$ to be the vectorization operator, i.e.,  operator which takes a matrix as an input, and returns a vector constructed by a column-wise stacking of the matrix columns. We will show both \begin{equation}\label{eq:hj_equivalence}
h^k = \frac1n\Vect{\mJ^k}
\end{equation} 
and~\eqref{eq:iterates_equivalence} using mathematical induction. Clearly, if $k=0$ both~\eqref{eq:hj_equivalence} and~\eqref{eq:iterates_equivalence} hold. Now, let us proceed with the second induction step.

\begin{eqnarray}\nonumber
x^{k+1} 
&=& 
 \prox(x^k - \alpha g^k) = \argmin_{x\in \R^d}\,  \alpha \Ind{}(x) +  \alpha\ppsi(x_{R_1}) + \|x - (x^k - \alpha g^k )\|^2
 \\
 \nonumber
&=& 
\argmin_{x\in \R^d}\,   \alpha \Ind{}(x) + \alpha \ppsi(x_{R_1}) + \left\|x- x^k + \alpha \left( h^k+\sum \limits_{i\in S}  \frac{1}{\pLi}(\nabla_i f(x^k) - h_i^k)\eLi\right) \right  \|^2
 \\
 \nonumber
&=& 
\argmin_{x = \Popt x}\,    \alpha \ppsi(x_{R_1}) + \left\|x- x^k + \alpha \left( h^k+\sum \limits_{i\in S}  \frac{1}{\pLi}(\nabla_i f(x^k) - h_i^k)\eLi\right) \right  \|^2
 \\
 \nonumber
&=& 
\argmin_{x = \Popt x}\,    \alpha \ppsi(x_{R_1}) + \left\|x- x^k + \alpha \left( h^k+\sum \limits_{i\in S}  \frac{1}{\pLi}(\nabla_i f(x^k) - h_i^k)\eLi\right) \right  \|^2_{\Popt}
 \\
 \nonumber
&\stackrel{\eqref{eq:iterates_equivalence}}{=}& 
\Lift{\argmin_{\xx \in \R^\dd}\,    \alpha \ppsi(\xx) + \left\|\Lift{\xx}- \Lift{\xx^k} + \alpha \left( h^k+\sum \limits_{i\in \sS}  \frac{1}{\pp_i} \left(\sum_{j\in R_i}\left(\frac1n \nabla_j \ff_i(\xx^k) - h_{(i-1)\dd+j}^k\right)e_{(i-1)\dd+j}\right)\right) \right  \|^2_{\Popt}}
 \\
 \nonumber
&=& 
\Lift{\argmin_{\xx \in \R^\dd}\,    \alpha \ppsi(\xx) + \frac1n \left\|n\xx- n\xx^k + \alpha \left( \sum_{i=1}^n h^k_{R_{i}}+\sum \limits_{i\in \sS}  \frac{1}{\pp_i} \left(  \frac1n\nabla \ff_i(\xx^k) - h_{R_i }^k\right)\right)\right  \|^2}
 \\
 \nonumber
&\stackrel{\eqref{eq:hj_equivalence}}{=}& 
\Lift{\argmin_{\xx \in \R^\dd}\,    \alpha \ppsi(\xx) + \frac1n \left\|n\xx- n\xx^k + \alpha \left( \frac1n\mJ^k\ee + \frac1n\sum \limits_{i\in \sS}  \frac{1}{\pp_i} \left((\nabla\ff_i(\xx^k) -\mJ^k_{:,i})\right)\right) \right  \|^2}
 \\
 \nonumber
&=& 
\Lift{\argmin_{\xx \in \R^\dd}\,   \aalpha \ppsi(\xx) +  \left\|\xx- \xx^k + \aalpha \left( \frac1n\mJ^k\ee + \frac1n\sum \limits_{i\in \sS}  \frac{1}{\pp_i} \left((\nabla\ff_i(\xx^k) -\mJ^k_{:,i})\right)\right) \right  \|^2}
 \\
&=& 
\Lift{\xx^{k+1}}. \label{eq:sequence}
\end{eqnarray}

It remains to notice that since $x^{k+1} = \Lift{\xx^k}$, we have $h^{k+1}= \frac1n\Vect{\mJ^{k+1}}$ as desired.

 \section{Missing lemmas and proofs: ASVRCD}

\subsection{Technical lemmas}
We first start with two key technical lemmas.

\begin{lemma}
	Suppose that \begin{equation}
	\eta \leq \frac{1}{2L}. \label{eq:mkdkmd}
	\end{equation}
	Then, for all $x \in \Range{\Popt}$ the following inequality holds:
	\begin{align}
		\frac{1}{\eta} \E{\<x - x^k,x^k - y^{k+1}> }
		&\leq
		\E{P(x) - P(y^{k+1}) - \frac{1}{4\eta}\norm{y^{k+1} - x^k}^2 + \frac{\eta}{2}\norm{g^k - \nabla f(x^k)}^2_\Popt}-D_f(x,x^k). \label{eq:keylemma_acc}
	\end{align}
\end{lemma}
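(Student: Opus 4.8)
The plan is to start from the first-order optimality condition of the proximal step, combine it with $\mM$-smoothness and the Bregman identity for $f$, and only at the very end take the conditional expectation (over the sampling of $S$) to manufacture the variance term.

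First I would record the optimality condition for $y^{k+1} = \proxop_{\eta\psi}(x^k - \eta g^k)$: there is a subgradient $\xi\in\partial\psi(y^{k+1})$ with $\xi = \frac{1}{\eta}(x^k - y^{k+1}) - g^k$. Convexity of $\psi$ then gives, for every $x$,
\[
\psi(x) - \psi(y^{k+1}) \;\geq\; \tfrac{1}{\eta}\langle x^k - y^{k+1},\, x - y^{k+1}\rangle - \langle g^k,\, x - y^{k+1}\rangle .
\]
(If $x\notin\{x^0+\Range{\Popt}\}$ then $P(x)=\infty$ and the claim is vacuous, so I may assume $\psi(x)<\infty$.) For the smooth part I apply $\mM$-smoothness at $(y^{k+1},x^k)$ and the definition of the Bregman distance at $(x,x^k)$, namely $f(x) = f(x^k) + \langle\nabla f(x^k),x-x^k\rangle + D_f(x,x^k)$. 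The structural step is that $y^{k+1}-x^k\in\Range{\Popt}$, since both iterates lie in the affine subspace $\{x^0+\Range{\Popt}\}$ by~\eqref{eq:q_identity}; writing $u:=y^{k+1}-x^k=\Popt u$ I get $\norm{u}^2_{\mM} = u^\top \Popt\mM\Popt\, u \le \lambda_{\max}(\Popt\mM\Popt)\norm{u}^2 = L\norm{u}^2$, because $\Popt\mM\Popt=(\Popt\mM^{\frac12})(\Popt\mM^{\frac12})^\top$ shares its nonzero spectrum with $\mM^{\frac12}\Popt\mM^{\frac12}$. Combined with $\eta\le\frac{1}{2L}$ from~\eqref{eq:mkdkmd} this yields $\frac{L}{2}\le\frac{1}{4\eta}$, hence $f(y^{k+1})-f(x^k)\le \langle\nabla f(x^k),y^{k+1}-x^k\rangle + \frac{1}{4\eta}\norm{y^{k+1}-x^k}^2$.

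Next I would add the two estimates to obtain a lower bound on $P(x)-P(y^{k+1})$, then isolate the target inner product via the elementary identity
\[
\langle x^k - y^{k+1},\, x - y^{k+1}\rangle = \langle x - x^k,\, x^k - y^{k+1}\rangle + \norm{x^k - y^{k+1}}^2 .
\]
After rearranging, the quadratic terms combine to $-\frac{3}{4\eta}\norm{x^k-y^{k+1}}^2$ and a residual inner product $\langle g^k-\nabla f(x^k),\,x-y^{k+1}\rangle$ survives. Finally I take the conditional expectation: splitting $x-y^{k+1}=(x-x^k)+(x^k-y^{k+1})$, the term $\langle g^k-\nabla f(x^k),\,x-x^k\rangle$ has zero mean by unbiasedness of $g^k$; for $\langle g^k-\nabla f(x^k),\,x^k-y^{k+1}\rangle$ I again use $x^k-y^{k+1}\in\Range{\Popt}$ to insert $\Popt$ and then apply Young's inequality, producing $\frac{\eta}{2}\norm{g^k-\nabla f(x^k)}^2_{\Popt}+\frac{1}{2\eta}\norm{x^k-y^{k+1}}^2$. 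Collecting the $\norm{x^k-y^{k+1}}^2$ coefficients, $-\frac{3}{4\eta}+\frac{1}{2\eta}=-\frac{1}{4\eta}$, gives exactly~\eqref{eq:keylemma_acc}.

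The main obstacle I expect is the careful bookkeeping of the $\norm{x^k-y^{k+1}}^2$ coefficient across the smoothness step and the Young step, together with making the $\Popt$-norm appear in the right place. Both of these hinge on the two uses of $y^{k+1}-x^k\in\Range{\Popt}$ — once to pass from $\norm{\cdot}_{\mM}$ to $L\norm{\cdot}$, and once to insert $\Popt$ before applying Young — so identifying and justifying that membership is the crux; the remaining algebra is routine.
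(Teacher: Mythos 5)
Your proof is correct and follows essentially the same route as the paper's: both start from the prox optimality condition with subgradient $\frac{1}{\eta}(x^k - y^{k+1}) - g^k$, use convexity of $\psi$, the Bregman identity for $f$, $L$-smoothness over the affine subspace (inserting $\Popt$ via $y^{k+1}-x^k \in \Range{\Popt}$), unbiasedness of $g^k$, and Young's inequality with parameter $\eta$, then collect coefficients using $\eta \leq \frac{1}{2L}$. The only differences are cosmetic: you defer the conditional expectation to the very end and track the quadratic coefficient as $-\frac{3}{4\eta} + \frac{1}{2\eta}$, whereas the paper takes expectations early and arrives at $-\frac{1}{\eta} + \frac{1}{2\eta} + \frac{L}{2}$, which is the same accounting.
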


\begin{proof}
	From the definition of $y^{k+1}$ we get
	\begin{equation*}
		y^{k+1} = x^k - \eta g^k - \eta \Delta,
	\end{equation*}
	where $\Delta \in \partial \psi(y^{k+1})$.
	Therefore,
	\begin{align}\nonumber
		\E{\frac{1}{\eta}\<x - x^k, x^k - y^{k+1}>}
     &	=
		\E{\<x - x^k, g^k + \Delta>}
		\\ \nonumber
		&=
		\<x - x^k, \nabla f(x^k)>
		+
		\E{\<x - y^{k+1}, \Delta> + \< y^{k+1} - x^k, \Delta>}\\
		&\leq
		f(x) - f(x^k) - D_f(x,x^k)
		+
		\E{\psi(x) - \psi(y^{k+1})}
		+
		\E{\< y^{k+1} - x^k, \Delta>} \label{eq:dnjansdjkajksd}
	\end{align}
	Now, we use the fact that $f$ is $L$-smooth over the set where iterates live (i.e.,  over $\{ x^0 + \Range{\Popt}\}$):
	\begin{eqnarray}\nonumber
			f(y^{k+1})& \leq& f(x^k) + \<\nabla f(x^k), y^{k+1} - x^k> + \frac{L}{2}\norm{y^{k+1} - x^k}^2 \\
		& = & f(x^k) + \<\Popt\nabla f(x^k), y^{k+1} - x^k> + \frac{L}{2}\norm{y^{k+1} - x^k}^2.\label{eq:dabhusdbhu}
	\end{eqnarray}
Thus, we have
	\begin{eqnarray*}
&&	\E{\frac{1}{\eta}\<x - x^k, x^k - y^{k+1}>} \\
	&& \,\,\,\,\,  \stackrel{\eqref{eq:dnjansdjkajksd}+\eqref{eq:dabhusdbhu}}{\leq}
	\E{P(x) - P(y^{k+1})
	+
	\< y^{k+1} - x^k, \Popt(\Delta + \nabla f(x^k))>
	+
	\frac{L}{2}\norm{y^{k+1} - x^k}^2} \\
	&& \qquad \qquad - D_f(x,x^k)
	 \\
	&& \qquad =
	\E{P(x) - P(y^{k+1})
		+
		\< y^{k+1} - x^k, \Popt(\nabla f(x^k) - g^k)>
		-
		\frac{1}{\eta}\norm{y^{k+1} - x^k}^2}
\\
&& \qquad
 \qquad 		
		+\E{\frac{L}{2}\norm{y^{k+1} - x^k}^2}- D_f(x,x^k) \\
 	&& \qquad \leq
	\E{P(x) - P(y^{k+1})
		+
		\frac{\eta}{2}\norm{\nabla f(x^k) - g^k}^2_\Popt
		-
		\frac{1}{2\eta}\norm{y^{k+1} - x^k}^2
		+
		\frac{L}{2}\norm{y^{k+1} - x^k}}\\
&& \qquad
  \qquad - D_f(x,x^k)\\
 	&& \qquad \stackrel{\eqref{eq:mkdkmd}}{\leq}
		\E{P(x) - P(y^{k+1})
		-
		\frac{1}{4\eta}\norm{y^{k+1} - x^k}^2
		+
		\frac{\eta}{2}\norm{\nabla f(x^k) - g^k}^2_\Popt} - D_f(x,x^k),  \\
	\end{eqnarray*}
	 	which concludes the proof. 
\end{proof}

\begin{lemma}
	Suppose, the following choice of parameters is used:
	\begin{equation*}
		\eta =  \frac14 \max\{\cL, L\}^{-1},\qquad
		\gamma = \frac{1}{\max\{2\mu, 4\theta_1/\eta\}},\qquad
		\beta = 1 - \gamma\mu,\qquad
		\theta_2 = \frac{\cL}{2\max\{L, \cL\}}.
	\end{equation*}
	Then the following inequality holds:
	\begin{align}\nonumber
		&\E{\norm{z^{k+1} - x^*}^2 + \frac{2\gamma\beta}{\theta_1}\left[P(y^{k+1}) - P(x^*)\right]}\\
		& \qquad \leq
		\beta \norm{z^k - x^*}^2
		+
		\frac{2\gamma\beta\theta_2}{\theta_1}\left[P(w^k)  - P(x^*)\right]
		+
		\frac{2\gamma\beta(1-\theta_1-\theta_2)}{\theta_1}
		\left[P(y^k) - P(x^*)\right].\label{eq:nhivbhi}
	\end{align}
\end{lemma}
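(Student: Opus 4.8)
The plan is to expand $\norm{z^{k+1}-x^*}^2$ via the $z^{k+1}$-update, identify each arising inner product with the quantity controlled by the preceding key lemma~\eqref{eq:keylemma_acc}, and then check that the stated formulas for $\eta,\theta_2,\gamma,\beta$ make every leftover term nonpositive. Throughout, all expectations are conditional on the information available at the start of iteration $k$, so $z^k,x^k,w^k,y^k$ are treated as fixed and only $y^{k+1}$ (through $g^k$) is random.

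First I would write $z^{k+1}-x^* = \beta(z^k-x^*) + (1-\beta)(x^k-x^*) + \frac{\gamma}{\eta}(y^{k+1}-x^k)$ and expand the squared norm using $\norm{\beta a + (1-\beta)b}^2 = \beta\norm{a}^2 + (1-\beta)\norm{b}^2 - \beta(1-\beta)\norm{a-b}^2$ with $a=z^k-x^*$, $b=x^k-x^*$ (so $a-b=z^k-x^k$). This yields $\beta\norm{z^k-x^*}^2 + (1-\beta)\norm{x^k-x^*}^2 - \beta(1-\beta)\norm{z^k-x^k}^2$, a cross term $\frac{2\gamma}{\eta}\<\beta(z^k-x^k)+(x^k-x^*),y^{k+1}-x^k>$, and $\frac{\gamma^2}{\eta^2}\norm{y^{k+1}-x^k}^2$. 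The piece $\frac{2\gamma}{\eta}\<x^k-x^*,y^{k+1}-x^k> = \frac{2\gamma}{\eta}\<x^*-x^k,x^k-y^{k+1}>$ is exactly $2\gamma$ times the left-hand side of~\eqref{eq:keylemma_acc} at $x=x^*$; applying that lemma together with $D_f(x^*,x^k)\ge\frac{\mu}{2}\norm{x^*-x^k}^2$ (from $\mu$-strong convexity over $\{x^0+\Range{\Popt}\}$) and using $\beta=1-\gamma\mu$ cancels the $\norm{x^k-x^*}^2$ term exactly.

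The crucial step is the surviving cross piece $\frac{2\gamma\beta}{\eta}\<z^k-x^k,y^{k+1}-x^k>$. Here I would use the $x^k$-update to write $z^k-x^k = \frac{\theta_2}{\theta_1}(x^k-w^k) + \frac{1-\theta_1-\theta_2}{\theta_1}(x^k-y^k)$, turning this piece into a combination of $\<w^k-x^k,x^k-y^{k+1}>$ and $\<y^k-x^k,x^k-y^{k+1}>$. Since $w^k,y^k\in\{x^0+\Range{\Popt}\}$, these are precisely the left-hand sides of~\eqref{eq:keylemma_acc} at $x=w^k$ and $x=y^k$, so the lemma applies a second and third time; I would keep the $-D_f(w^k,x^k)$ term it produces rather than discard it. All the variance terms $\norm{g^k-\nabla f(x^k)}^2_\Popt$ generated by the three uses of~\eqref{eq:keylemma_acc} are then bounded via the expected-smoothness inequality~\eqref{eq:exp:smooth} by $2\cL D_f(w^k,x^k)$. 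Recognizing that $z^k-x^k$ must be resolved along the $w^k$- and $y^k$-directions so the key lemma can be reused is the main conceptual move; everything else is arithmetic.

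What remains — and is the genuine obstacle — is collecting coefficients and verifying signs, which must come out exactly. Rewriting each function value as $P(\cdot)-P(x^*)$, the coefficient of $[P(y^{k+1})-P(x^*)]$ becomes $-\frac{2\gamma\beta}{\theta_1}-2\gamma^2\mu$; moving $-\frac{2\gamma\beta}{\theta_1}$ to the left reconstructs the Lyapunov function, the surplus $-2\gamma^2\mu[P(y^{k+1})-P(x^*)]\le0$ is dropped, and the $[P(w^k)-P(x^*)]$ and $[P(y^k)-P(x^*)]$ coefficients already match the target. The three residual quadratics must each be nonpositive: $-\beta(1-\beta)\norm{z^k-x^k}^2\le0$ is immediate; the aggregate $D_f(w^k,x^k)$ coefficient is nonpositive because $\cL\eta=\theta_2/2$ and $\gamma\mu(1+\theta_1)\le1$ (the latter from $\gamma\le\frac{1}{2\mu}$ and $\theta_1\le1$); and the aggregate $\norm{y^{k+1}-x^k}^2$ coefficient is nonpositive because $\gamma\le\frac{\eta}{4\theta_1}$ gives $\frac{2\gamma\theta_1}{\eta}\le\tfrac12\le\beta$. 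These three inequalities are exactly what the stated choices of $\eta,\theta_2,\gamma,\beta$ are engineered to guarantee, so discarding the nonpositive residuals yields~\eqref{eq:nhivbhi}.
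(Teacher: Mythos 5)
Your proposal is correct and follows essentially the same route as the paper's proof: expand the $z^{k+1}$-update, resolve the cross term $\frac{2\gamma\beta}{\eta}\langle z^k-x^k,\,y^{k+1}-x^k\rangle$ along the $w^k$- and $y^k$-directions via the $x^k$-update, apply the key lemma~\eqref{eq:keylemma_acc} three times (at $x^*$, $w^k$, $y^k$), cancel $\|x^k-x^*\|^2$ using strong convexity with $\beta=1-\gamma\mu$, absorb the variance via~\eqref{eq:exp:smooth} into the retained $-D_f(w^k,x^k)$ term, and verify the parameter inequalities. The only differences are bookkeeping choices (you keep the exact expansion term $-\beta(1-\beta)\|z^k-x^k\|^2$ and the exact coefficient $-\frac{2\gamma\beta}{\theta_1}-2\gamma^2\mu$ of $P(y^{k+1})-P(x^*)$, dropping the nonpositive surpluses at the end, where the paper discards them earlier via convexity and $\beta\le 1$), and your arithmetic checks out.
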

\begin{proof}
	\begin{align*}
		\E{\norm{z^{k+1} - x^*}^2}
		&=
		\E{\norm{\beta z^k + (1-\beta)x^k - x^* + \frac{\gamma}{\eta}(y^{k+1} - x^k)}^2}
		\\
		&\leq
		\beta \norm{z^k - x^*}^2
		+
		(1-\beta)\norm{x^k - x^*}^2
		+
		\frac{\gamma^2}{\eta^2}\E{\norm{y^{k+1} - x^k}^2}
		\\
		& \qquad +
		\frac{2\gamma}{\eta}\E{\<y^{k+1} - x^k, \beta z^k + (1-\beta)x^k - x^*>}
		\\
		&=
		\beta \norm{z^k - x^*}^2
		+
		(1-\beta)\norm{x^k - x^*}^2
		+
		\frac{\gamma^2}{\eta^2}\E{\norm{y^{k+1} - x^k}^2}
		\\
		& \qquad +
		\frac{2\gamma}{\eta}\E{\<y^{k+1} - x^k, x^k - x^*>}
		+
		\frac{2\gamma\beta}{\eta}\E{\<y^{k+1} - x^k, z^k - x^k>}
		\\
		&=	
		\beta \norm{z^k - x^*}^2
		+
		(1-\beta)\norm{x^k - x^*}^2
		+
		\frac{\gamma^2}{\eta^2}\E{\norm{y^{k+1} - x^k}^2}
		+
		\frac{2\gamma}{\eta}\E{\<x^k - y^{k+1}, x^* - x^k>}
		\\
		& \qquad +
		\frac{2\gamma\beta\theta_2}{\eta\theta_1}\E{\<x^k - y^{k+1} , w^k - x^k>}
		+
		\frac{2\gamma\beta( 1  - \theta_1 -\theta_2 )}{\eta\theta_1}\E{\<x^k - y^{k+1} , y^k - x^k>}
		\\
		&\stackrel{\eqref{eq:keylemma_acc}}{\leq}
		\beta \norm{z^k - x^*}^2
		+
		(1-\beta)\norm{x^k - x^*}^2
		+
		\frac{\gamma^2}{\eta^2}\E{\norm{y^{k+1} - x^k}^2}
		\\
		&\qquad		+
		2\gamma \E{P(x^*) - P(y^{k+1}) - \frac{1}{4\eta}\norm{y^{k+1} - x^k}^2 - D_f(x^*,x^k) + \frac{\eta}{2}\norm{g^k-\nabla f(x^k)}^2_\Popt}
		\\
		& \qquad +
		\frac{2\gamma\beta\theta_2}{\theta_1}\E{P(w^k) - P(y^{k+1}) - \frac{1}{4\eta}\norm{y^{k+1} - x^k}^2 - D_f(w^k,x^k) + \frac{\eta}{2}\norm{g^k-\nabla f(x^k)}^2_\Popt}
		\\
		&\qquad +
		\frac{2\gamma\beta( 1  - \theta_1 -\theta_2 )}{\theta_1}\E{P(y^k) - P(y^{k+1}) - \frac{1}{4\eta}\norm{y^{k+1} - x^k}^2 + \frac{\eta}{2}\norm{g^k - \nabla f(x^k)}^2_\Popt}
		\\
		&\stackrel{\eqref{eq:sc}}{\leq}
		\beta \norm{z^k - x^*}^2
		+
		(1-\beta - \gamma\mu)\norm{x^k - x^*}^2
		+
		\frac{\gamma^2}{\eta^2}\E{\norm{y^{k+1} - x^k}^2}
		\\
		&\qquad 		+
		2\gamma\beta \E{P(x^*) - P(y^{k+1}) - \frac{1}{4\eta}\norm{y^{k+1} - x^k}^2} +  \eta\gamma\E{\norm{g^k-\nabla f(x^k)}^2_\Popt}
		\\
		& \qquad+
		\frac{2\gamma\beta\theta_2}{\theta_1}\E{P(w^k) - P(y^{k+1}) - \frac{1}{4\eta}\norm{y^{k+1} - x^k}^2 -  D_f(w^k,x^k)+ \frac{\eta}{2}\norm{g^k-\nabla f(x^k)}^2_\Popt}
		\\
		& \qquad +
		\frac{2\gamma\beta(1  - \theta_1 -\theta_2)}{\theta_1}\E{P(y^k) - P(y^{k+1}) - \frac{1}{4\eta}\norm{y^{k+1} - x^k}^2 + \frac{\eta}{2}\norm{g^k - \nabla f(x^k)}^2_\Popt}.
	\end{align*}
		Using $\beta = 1 - \gamma\mu$ we get
	\begin{align*}
	\E{\norm{z^{k+1} - x^*}^2}
	&\leq
	\beta \norm{z^k - x^*}^2
	+
	\left[\frac{\gamma^2}{\eta^2} - \frac{\gamma\beta}{2\eta\theta_1}\right]\E{\norm{y^{k+1} - x^k}^2}
	+
	\frac{\eta\gamma}{\theta_1}\E{\norm{g^k - \nabla f(x^k)}^2_\Popt}
		\\
	&
	\qquad -
	\frac{2\gamma\beta\theta_2}{\theta_1}D_f(w^k,x^k)
	+
	2\gamma\beta \E{P(x^*) - P(y^{k+1})}\\
	&\qquad
	+
	\frac{2\gamma\beta\theta_2}{\theta_1}\E{P(w^k) - P(y^{k+1})}
	+
	\frac{2\gamma\beta(1-  \theta_1 -\theta_2 )}{\theta_1}\E{P(y^k) - P(y^{k+1})}.
	\end{align*}
	Using stepsize $\gamma \leq \frac{\beta\eta}{2\theta_1}$ we get
	\begin{align*}
	\E{\norm{z^{k+1} - x^*}^2}
	&\leq
	\beta \norm{z^k - x^*}^2
	+
	\frac{\eta\gamma}{\theta_1}\E{\norm{g^k - \nabla f(x^k)}^2_\Popt}
	-
	\frac{2\gamma\beta\theta_2}{\theta_1}D_f(w^k,x^k)
		+
	2\gamma\beta \E{P(x^*) - P(y^{k+1})}
	\\
	&
	\qquad +
	\frac{2\gamma\beta\theta_2}{\theta_1}\E{P(w^k) - P(y^{k+1})}
	+
	\frac{2\gamma\beta(1  - \theta_1 - \theta_2)}{\theta_1}\E{P(y^k) - P(y^{k+1})}.
	\end{align*}
	Now, using the expected smoothness from inequality~\eqref{eq:exp:smooth}:
	\begin{equation}
		\E{\norm{g^k - \nabla f(x^k)}^2_\Popt} \leq 2\cL D_f(w^k,x^k)
	\end{equation}
	 and stepsize $\eta \leq \frac{\beta\theta_2}{\cL}$ we get
	 \begin{align*}
	 \E{\norm{z^{k+1} - x^*}^2}
	 &\leq
	 \beta \norm{z^k - x^*}^2
   +
	 \frac{2\cL\eta\gamma }{\theta_1} D_f(w^k,x^k)
	 -
	 \frac{2\gamma\beta\theta_2}{\theta_1} D_f(w^k,x^k)
	 +
	 2\gamma\beta \E{P(x^*) - P(y^{k+1})}
	 \\
	 &
	 \qquad 
	 +
	 \frac{2\gamma\beta\theta_2}{\theta_1}\E{P(w^k) - P(y^{k+1})}
	 +
	 \frac{2\gamma\beta(1 - \theta_1- \theta_2 )}{\theta_1}\E{P(y^k) - P(y^{k+1})}\\
	 &\leq
	  \beta \norm{z^k - x^*}^2
	  +
	  2\gamma\beta \E{P(x^*) - P(y^{k+1})}
	  +
	  \frac{2\gamma\beta\theta_2}{\theta_1}\E{P(w^k) - P(y^{k+1})}\\
	  & \qquad +
	  \frac{2\gamma\beta( 1 - \theta_1- \theta_2 )}{\theta_1}\E{P(y^k) - P(y^{k+1})}\\
	  &=
	  \beta \norm{z^k - x^*}^2
	  -
	  \frac{2\gamma\beta}{\theta_1}\E{P(y^{k+1}) - P(x^*)}
	  +
	  \frac{2\gamma\beta\theta_2}{\theta_1}\left[P(w^k)  - P(x^*)\right] \\
	  & \qquad +
	  \frac{2\gamma\beta( 1-\theta_1-\theta_2)}{\theta_1}
	  \left[P(y^k) - P(x^*)\right].
	 \end{align*}
	 It remains to rearrange the terms.
\end{proof}

\subsection{Proof of Theorem~\ref{thm:acc}} 
 
	One can easily show that
	\begin{equation}\label{eq:bfrbuf}
		\E{P(w^{k+1})} = \probx P(y^k) + (1- \probx)P(w^k).
	\end{equation}
	Using that we obtain
	\begin{eqnarray*}
		\E{\Psi^{k+1}}
		&\stackrel{\eqref{eq:nhivbhi}+\eqref{eq:bfrbuf}}{\leq}&
		\beta \norm{z^k - x^*}^2
		+
		\frac{2\gamma\beta\theta_2}{\theta_1}\left[P(w^k)  - P(x^*)\right]
		+
		\frac{2\gamma\beta( 1-\theta_1 -\theta_2)}{\theta_1}
		\left[P(y^k) - P(x^*)\right]\\
		&& \qquad +
		\frac{(2\theta_2 + \theta_1)\gamma\beta}{\theta_1\probx}\left[ \probx P(y^{k}) + (1- \probx)P(w^k) - P(x^*)\right]\\
		&=&
		\beta \norm{z^k - x^*}^2
		+
		\frac{2\gamma\beta(1- \theta_1/2)}{\theta_1}
		\left[P(y^k) - P(x^*)\right]
		\\
		&& \qquad 
		+
		\frac{(2\theta_2 + \theta_1)\gamma\beta}{\theta_1\probx}\left[1 -  \probx + \frac{2\probx\theta_2}{2\theta_2 + \theta_1}\right]\left[P(w^k) - P(x^*)\right]\\
		&\leq&
		\max\left\{1 - \frac{1}{\max\{2, 4\theta_1/(\eta\mu)\}}, 1 -  \frac{\theta_1}{2}, 1 - \frac{ \probx\theta_1}{2\max \{2\theta_2,\theta_1\}} \right\}\Psi^k\\
		&=&
		 \left[1 -  \max\left\{\frac{2}{\probx},\frac{4}{\theta_1}\max\left\{\frac{1}{2}, \frac{\theta_2}{\rho}\right\}, \frac{4\theta_1}{\eta\mu}    \right\}^{-1} \right]\Psi^k.
	\end{eqnarray*}
	Using $\theta_1 = \min\left\{\frac{1}{2},\sqrt{\eta\mu \max\left\{\frac{1}{2}, \frac{\theta_2}{\rho}\right\}}\right\} $ we get
	\begin{align*}
		\E{\Psi^{k+1}} &\leq
		\left[1 -  \max\left\{\frac{2}{\probx},8\max\left\{\frac{1}{2}, \frac{\theta_2}{\rho}\right\}, 4\sqrt{\frac{\max\left\{\frac{1}{2}, \frac{\theta_2}{\rho}\right\}}{\eta\mu}} \right\}^{-1} \right]\Psi^k\\
		&\leq
		\left[1 -  \frac{1}{4}\max\left\{\frac{1}{\probx}, \sqrt{\frac{2\max\left\{L, \frac{\cL}{\rho}\right\}}{\mu}} \right\}^{-1} \right]\Psi^k,
	\end{align*}
as desired.

\subsection{Proof of Lemma~\ref{lem:exp_smooth_vs_ESO}}

To establish that that we can choose $\cL = \ccL$, it suffices to see
\begin{eqnarray*}
\E{\norm{g^k - \nabla f(x^k)}^2_\Popt}  &= & \E{\norm{  \sum \limits_{i\in S}  \frac{1}{\pLi}(\nabla_i f(x^k) - \nabla_i f(w^k))\ones_i + \nabla f(w^k) - \nabla f(x^k)}^2_\Popt}  \\
& \leq  & 
 \E{\norm{  \sum \limits_{i\in S}  \frac{1}{\pLi}(\nabla_i f(x^k) - \nabla_i f(w^k))\ones_i}^2_\Popt}   \\
   &\stackrel{\eqref{eq:ccLdef}}{\leq} & 
\ccL \norm{ \nabla f(x^k) - \nabla f(w^k) }^2_{\mM^{-1}}\\
     &\stackrel{\eqref{eq:smooth}}{\leq} & 
 2 \ccL D_f(w^k,x^k).
\end{eqnarray*}

Next, to establish $\ccL \geq L $, let $\mQ \eqdef   \sum_{i\in S} \frac{1}{\pLi} \eLi \eLi^\top\Popt$. Consequently, we get

\begin{eqnarray*}
\ccL &\stackrel{\eqref{eq:ccLdef}}{=}&
 \lambda_{\max}\left( \mM^{\frac12} \E{  \sum_{i\in S} \frac{1}{\pLi} \eLi \eLi^\top\Popt \sum_{i\in S} \frac{1}{\pLi} \eLi \eLi^\top} \mM^{\frac12} \right)
 \\
 &=&
  \lambda_{\max}\left( \mM^{\frac12} \E{  \sum_{i\in S} \frac{1}{\pLi} \eLi \eLi^\top\Popt^2 \sum_{i\in S} \frac{1}{\pLi} \eLi \eLi^\top} \mM^{\frac12} \right)
 \\
 & =&
  \lambda_{\max}\left( \mM^{\frac12} \E{  \mQ \mQ^\top} \mM^{\frac12} \right)
 \\
& \geq &
   \lambda_{\max}\left( \mM^{\frac12} \E{  \mQ}\E{ \mQ^\top} \mM^{\frac12} \right)
 \\
  &=&
  \lambda_{\max}\left( \mM^{\frac12}\Popt^2 \mM^{\frac12} \right)
   \\
  &=&
  \lambda_{\max}\left( \mM^{\frac12}\Popt \mM^{\frac12} \right)
     \\
  &=&
 L,
\end{eqnarray*}
as desired. 

\subsection{Proof of Lemma~\ref{lem:acc_example}}
Let us look first at $\Popt = \mI$. In such case, it is easy to see that 
\begin{align*}
\E{\norm{g^k - \nabla f(x^k)}^2_{\Popt}} &= \E{\norm{g^k - \nabla f(x^k)}^2} \\
 &\leq  \E{\norm{d ( \nabla_i f(x^k) - \nabla_i f(w^k) ) e_i }^2}   \\
&= d \norm{ \nabla f(x^k) - \nabla f(w^k) }^2 \\
&\leq  2d \lambda_{\max} \mM D_f(w^k,x^k),
\end{align*}
  i. e. we can choose $\cL = d \lambda_{\max} \mM$. Noting that $ \lambda_{\max} \mM\geq L$, the iteration complexity of Algorithm~\ref{alg:acc} is $\cO\left(d\sqrt{\frac{L}{\mu}} \log \frac1\epsilon\right)$.

On the other hand, if $\Popt = \frac1d \ee^\top$, we have 
\begin{align*}
\E{\norm{g^k - \nabla f(x^k)}^2_{\Popt}} =&\E{\norm{g^k - \nabla f(x^k)}^2_{\frac1d ee^\top}} \\
 &  \leq  \E{\norm{d ( \nabla_i f(x^k) - \nabla_i f(w^k) ) e_i }^2_{\frac1d ee^\top}}   \\
&=  \norm{ \nabla f(x^k) - \nabla f(w^k) }^2 \\
&\leq  2 \lambda_{\max} \mM D_f(w^k,x^k),
\end{align*}
and therefore $\cL =  \lambda_{\max} \mM$, which yields $\cO\left(\sqrt{\frac{d \lambda_{\max} \mM}{\mu}} \log \frac1\epsilon\right)$ convergence rate.

 \section{Missing lemmas and proofs: L-Katyusha as a particular case of ASVRCD}

\subsection{Proof of Lemma~\ref{lem:katyusha_from_asvrcd}}
Let us proceed by induction. We will show the following for all $k\geq 0$ we have 
\begin{equation} \label{eq:induction_acc}
\xx^k = x^k_{R_1}=\dots =x^k_{R_n}, \; \yy^k= y^k_{R_1}=\dots =y^k_{R_n},\;  \zz^k = z^k_{R_1}=\dots =z^k_{R_n} \; \mathrm{and} \: \ww^k = w^k_{R_1}=\dots =w^k_{R_n}.
\end{equation}

Clearly, for $k=0$, the above claim holds. Let us proceed with the second induction step and assume that~\eqref{eq:induction_acc} holds for some $k\geq 0$. First, the update rule on $\{x^k\}$ for ASVRCD together with the update rule on $\{\xx^k\}$ yields 
\begin{equation}\label{eq:induction_acc_x}
\xx^{k+1} = x^{k+1}_{R_1}=\dots= x^{k+1}_{R_n}. 
\end{equation}

To show 
\begin{equation}\label{eq:induction_acc_y}
\yy^{k+1}= y^{k+1}_{R_1}=\dots =y^{k+1}_{R_n},
\end{equation}

we essentially repeat the proof of Lemma~\ref{lem:saga_from_sega}. In particular, it is sufficient to repeat the sequence of inequalities~\eqref{eq:sequence} where variables \[(x^{k+1}, \xx^{k+1}, h^k, \mJ^{k} \alpha, \aalpha)\] are replaced by \[(y^{k+1}, \yy^{k+1},\nabla f(w^k), [\nabla \ff_1(\ww^k), \dots,\nabla \ff_n(\ww^k) ], \eta, \eeta)\] respectively. 

Next, $ \zz^{k+1}= z^{k+1}_{R_1}=\dots= z^{k+1}_{R_n}$ follows from~\eqref{eq:induction_acc},~\eqref{eq:induction_acc_x} and~\eqref{eq:induction_acc_y} together with the update rule (on $\{z^k\}$ and $\{\zz^k\}$) of both algorithms and the fact that $\frac{\gamma}{\eta} = \frac{\ggamma}{\eeta}$.

To finish the proof of the algorithms equivalence, we shall notice that $ \ww^{k+1}= w^{k+1}_{R_1}=\dots = w^{k+1}_{R_n}$ follows from~\eqref{eq:induction_acc},~\eqref{eq:induction_acc_y} together with the update rule (on $\{w^k\}$ and $\{\ww^k\}$) of both algorithms.

To show $\cL = \frac{\cLL}{n} $ it is sufficient to see
\begin{eqnarray*}
\E{\norm{g^k - \nabla f(x^k)}^2_\Popt} 
&=& 
\E{  
\norm{\sum_{i\in \sS} p_{i}^{-1}\left( \sum_{j\in R_i} \left( \nabla_{j} f(x^k) - \nabla_{j} f(w^k)\right) e_j \right) - \left(\nabla f(x^k) - \nabla f(w^k)\right)}^2_\Popt
}
\\
&=&
\E{  
\norm{\Popt \left(\sum_{i\in \sS} p_{i}^{-1}\left( \sum_{j\in R_i} \left( \nabla_{j} f(x^k) - \nabla_{j} f(w^k)\right) e_j \right)\right) - \Popt\left(\nabla f(x^k) - \nabla f(w^k)\right)}^2
}
\\
&=&
\frac1n \E{  
\norm{ \left( \frac1n \sum_{i\in \sS} \pp_{i}^{-1} \left( \nabla \ff_i(\xx^k) - \nabla \ff_i(\ww^k)\right) \right) -  \left(\nabla \ff(\xx^k) - \nabla \ff(\ww^k)\right)}^2
}
\\
&=&
\frac1n\E{  
\norm{ \ggg^k-  \nabla \ff(\ww^k)}^2
}
\\
&\leq&
 2\frac{\cLL}{n} D_\ff(\ww^k,\xx^k)
 \\
&=&
 2\frac{\cLL}{n} \left( \frac1n\sum_{i=1}^n D_{\ff_i}(w^k_{R_i},x^k_{R_i})\right)
  \\
&=&
 2\frac{\cLL}{n} D_{f}(w^k,x^k).
\end{eqnarray*}

Lastly, if $x,y \in \Range{\Popt}$, there is $\xx, \yy \in \R^\dd$ such that  $x = \Lift{\xx}, y = \Lift{\yy}$. Therefore we can write
\begin{eqnarray*}
f(x) = f(\Popt(x)) = \frac1n \sum_{j=1}^n \ff_j(\xx)& \leq&  \frac1n\sum_{j=1}^n \ff_j(\yy) + \< \nabla \left( \frac1n \sum_{j=1}^n \ff_j(\yy)\right), \xx-\yy  >  + \frac{\LL}{2} \| \xx-\yy \|^2
\\
&=&
 f(y)+ \< \nabla f(y), x-y  >  + \frac{\LL}{2n} \| x-y\|^2
\end{eqnarray*}
 and thus $L = \lambda_{\max} \left( \mM^{\frac12} \Popt \mM^{\frac12}\right)  \leq  n^{-1}\LL$.

 \section{Tighter rates for GJS~\cite{hanzely2019one} by exploiting prox \label{sec:analysis2} and proof of Theorem~\ref{thm:sega_as}}

In this section, we show that specific nonsmooth function $\psi$ might lead to faster convergence of variance reduced methods. We exploit the well-known fact that under some circumstances, a proximal operator might change the smoothness structure of the objective~\cite{gutman2019condition}. In particular, we consider Generalized Jacobian Sketching (GJS) from~\cite{hanzely2019one}. We generalize Theorem 5.1 therein, which allows for a tighter rate if $\psi$ has a specific structure. 

\subsection{GJS}

Consider a the following objective:
\[
\min_{x\in \R^d} \sum_{i=1}^n f_i(x) + \psi(x).
\]
and define Jacobian operator $\mG: \R^{d}\rightarrow \R^{d\times n}$ as $\mG(x) \eqdef [ \nabla f_1(x), \dots,  \nabla f_n(x)]$. Further, define $\cM: \R^{d\times n} \rightarrow \R^{d\times n}$ to be such linear operator that the following holds $\left(\cM \mX\right)_{:j} = \mM_j \mX_{:j}$ for $j\in [n]$.

 Suppose that $\cU: \R^{d\times n} \rightarrow \R^{d\times n}$ is a random linear operator such that $\E{\cU}$ is identity, and $\cS: \R^{d\times n} \rightarrow \R^{d\times n}$ is a random projection operator. Given the (fixed) distribution over $\cU, \cS$, GJS is a variance reduced algorithm with the oracle access to $\cU \mG(x), \cS \mG(x)$. 

\begin{algorithm}[!h]
\begin{algorithmic}[1]
\STATE \textbf{Parameters:} Stepsize $\alpha>0$, random projector $\cS$ and unbiased sketch $\cU$
\STATE \textbf{Initialization:} Choose  solution estimate $x^0 \in \R^d$ and Jacobian estimate $ \mJ^0\in \R^{d\times n}$ 
\FOR{$k =  0, 1, \dots$}
\STATE Sample realizations of $\cS$ and $\cU$, and perform sketches $\cS\mG(x^k)$ and $\cU\mG(x^k)$
\STATE  $\mJ^{k+1} = \mJ^k - \cS(\mJ^k - \mG(x^k))$ \quad \hfill update the Jacobian estimate
\STATE $g^k = \frac1n \mJ^k \eR + \frac1n \cU \left(\mG(x^k) -\mJ^k\right)\eR$  \hfill construct the gradient estimator 
    \STATE $x^{k+1} = \proxgjs (x^k - \alpha g^k)$ \label{eq:alg_update} \hfill perform the proximal SGD step 
\ENDFOR
\end{algorithmic}
\caption{Generalized JacSketch (GJS)}
\label{alg:SketchJac}
\end{algorithm}

\begin{theorem}[Extension of Theorem 5.1 from~\cite{hanzely2019one}] \label{thm:main2}

Define $f(x) \eqdef \frac1n \sum_{i=1}^n f_i(x)$. Let Assumption~\ref{ass:indicator} hold and suppose that  ${\cM^\dagger}^{\nicefrac{1}{2}}$ commutes with $\cS$. Next, let $\alpha$ and $\cB$ are such that for every $\mX\in \R^{d\times n}$ we have
\begin{equation}\label{eq:small_step_v2}
 \frac{2\alpha}{n^2}  \E{ \norm{ \cU  \mX e }^2_{\Popt} }   +   \NORMG{  \left(\cI - \E{\cS} \right)^{\frac12}\cB  {\cM^\dagger} \mX }  \leq (1-\alpha \sigma') \NORMG{ \cB {\cM^\dagger}\mX },
\end{equation}
\begin{equation}
\frac{2\alpha}{n^2} \E{  \norm{ \cU  \mX  e }_{\Popt}^2  } 
+   \NORMG{\left(\E{\cS}\right)^{\frac12}  \cB  {\cM^\dagger} \mX  }    \leq \frac{1}{n} \norm{{\cM^\dagger}\mX}^2\label{eq:small_step2_v2}
\end{equation}
and $\cB$ commutes with $\cS$. Then  for all $k\geq 0$, we have
$\E{\Psi^{k}}\leq \left( 1-\alpha\sigma'\right)^k \Psi^{0},$ where
\begin{eqnarray*}\label{eq:Lyapunov}
\Psi^k & \eqdef & \norm{ x^k - x^* }^2 + \alpha \NORMG{ \cB {\cM^\dagger}^{\frac12} \left( \mJ^k - \mG(x^*)\right)}.
\end{eqnarray*}
\end{theorem}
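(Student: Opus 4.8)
The plan is to establish the one-step contraction $\E{\Psi^{k+1}} \leq (1-\alpha\sigma')\Psi^k$ and then unroll it. I would control the two summands of $\Psi^k$ — the iterate gap $\norm{x^k-x^*}^2$ and the Jacobian gap $\alpha\NORMG{\cB{\cM^\dagger}^{\frac12}(\mJ^k-\mG(x^*))}$ — separately, and fuse them with the two stepsize conditions~\eqref{eq:small_step_v2}--\eqref{eq:small_step2_v2}. Throughout I abbreviate $\mG^k \eqdef \mG(x^k)$, $\mG^* \eqdef \mG(x^*)$ and $G \eqdef \nabla f(x^k)-\nabla f(x^*)$, and I use the GJS unbiasedness $\E{g^k} = \nabla f(x^k)$, which follows from $\E{\cU}=\cI$.

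For the iterate gap I would start from the prox step together with the fixed-point identity $x^* = \prox(x^*-\alpha\nabla f(x^*))$, and invoke the nonexpansiveness of the prox in the $\Popt$-norm, inequality~\eqref{eq:prox_cont}, to get $\norm{x^{k+1}-x^*}^2 \leq \norm{(x^k-x^*)-\alpha(g^k-\nabla f(x^*))}^2_\Popt$. Since $x^k,x^*\in\{x^0+\Range{\Popt}\}$ by~\eqref{eq:q_identity}, we have $x^k-x^*\in\Range{\Popt}$, so $\norm{x^k-x^*}^2_\Popt=\norm{x^k-x^*}^2$ and the cross term collapses to the plain Euclidean inner product. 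Expanding and taking conditional expectation then yields $\E{\norm{x^{k+1}-x^*}^2}\leq\norm{x^k-x^*}^2-2\alpha\langle G,x^k-x^*\rangle+\alpha^2\E{\norm{g^k-\nabla f(x^*)}^2_\Popt}$. To match the conditions I would decompose the variance by writing $n(g^k-\nabla f(x^*))=\cU(\mG^k-\mG^*)e+(\cI-\cU)(\mJ^k-\mG^*)e$; since $\E{\cU}=\cI$ gives $\E{\norm{(\cI-\cU)Y}^2_\Popt}\leq\E{\norm{\cU Y}^2_\Popt}$, one application of $\norm{a+b}^2\leq2\norm{a}^2+2\norm{b}^2$ bounds $\alpha^2\E{\norm{g^k-\nabla f(x^*)}^2_\Popt}$ by $\tfrac{2\alpha^2}{n^2}(\E{\norm{\cU(\mG^k-\mG^*)e}^2_\Popt}+\E{\norm{\cU(\mJ^k-\mG^*)e}^2_\Popt})$; this is where the constant $2$ in the conditions originates.

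For the Jacobian gap I would substitute the update $\mJ^{k+1}-\mG^*=(\cI-\cS)(\mJ^k-\mG^*)+\cS(\mG^k-\mG^*)$ and push $\cB{\cM^\dagger}^{\frac12}$ through $\cS$ using the two commutation hypotheses. Because $\cS$ is a self-adjoint idempotent projector the cross term vanishes, and taking expectations $\E{\NORMG{\cB{\cM^\dagger}^{\frac12}(\mJ^{k+1}-\mG^*)}}$ splits into a stale-Jacobian piece $\NORMG{(\cI-\E{\cS})^{\frac12}\cB{\cM^\dagger}^{\frac12}(\mJ^k-\mG^*)}$ and a fresh-gradient piece $\NORMG{(\E{\cS})^{\frac12}\cB{\cM^\dagger}^{\frac12}(\mG^k-\mG^*)}$. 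I then assemble $\E{\Psi^{k+1}}$: condition~\eqref{eq:small_step_v2} with $\mX=\mJ^k-\mG^*$ contracts the stale piece to $(1-\alpha\sigma')$ times the Jacobian Lyapunov term while releasing $-\tfrac{2\alpha^2}{n^2}\E{\norm{\cU(\mJ^k-\mG^*)e}^2_\Popt}$, and condition~\eqref{eq:small_step2_v2} with $\mX=\mG^k-\mG^*$ bounds the fresh piece by $\tfrac{\alpha}{n}\NORMG{{\cM^\dagger}^{\frac12}(\mG^k-\mG^*)}-\tfrac{2\alpha^2}{n^2}\E{\norm{\cU(\mG^k-\mG^*)e}^2_\Popt}$. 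The two released $\cU\mX e$ penalties cancel the two variance pieces exactly, leaving only $\tfrac{\alpha}{n}\NORMG{{\cM^\dagger}^{\frac12}(\mG^k-\mG^*)}$. This last term I convert via the smoothness consequence $\tfrac1n\NORMG{{\cM^\dagger}^{\frac12}(\mG^k-\mG^*)}=\tfrac1n\sum_j\norm{\nabla f_j(x^k)-\nabla f_j(x^*)}^2_{\mM_j^\dagger}\leq2D_f(x^k,x^*)$ and $\sigma'$-strong convexity $D_f(x^k,x^*)\leq\langle G,x^k-x^*\rangle-\tfrac{\sigma'}{2}\norm{x^k-x^*}^2$ into $2\alpha\langle G,x^k-x^*\rangle-\alpha\sigma'\norm{x^k-x^*}^2$, which is precisely absorbed by the descent term $-2\alpha\langle G,x^k-x^*\rangle$ and the target factor $-\alpha\sigma'\norm{x^k-x^*}^2$. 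This yields $\E{\Psi^{k+1}}\leq(1-\alpha\sigma')\Psi^k$, and the theorem follows by taking total expectation and iterating.

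The main obstacle is the simultaneous bookkeeping in the final assembly: the gradient-variance split must be arranged so that it produces exactly the two $\cU\mX e$ quantities that the two stepsize conditions are engineered to subtract, and the residual smoothness term must be exactly what convexity plus strong convexity can soak up against the descent inner product. The only genuinely new ingredient over the original GJS theorem is the pervasive appearance of $\Popt$, which enters solely because the prox is nonexpansive in the $\Popt$-norm rather than the Euclidean norm; this is legitimate precisely because~\eqref{eq:q_identity} keeps both the iterates and $x^*$ inside $x^0+\Range{\Popt}$, so that $\norm{x^k-x^*}^2_\Popt=\norm{x^k-x^*}^2$.
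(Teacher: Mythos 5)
Your proposal is correct and follows essentially the same route as the paper's own proof: prox nonexpansiveness in the $\Popt$-norm combined with the fixed-point identity, the variance split of $g^k-\nabla f(x^*)$ into $\cU(\mG(x^k)-\mG(x^*))\ones$ and $(\cI-\cU)(\mJ^k-\mG(x^*))\ones$ (the paper's Lemma on $g^k$, which you re-derive inline), the projector splitting of the Jacobian update via the commutation hypotheses, and then conditions~\eqref{eq:small_step_v2} and~\eqref{eq:small_step2_v2} applied to $\mX=\mJ^k-\mG(x^*)$ and $\mX=\mG(x^k)-\mG(x^*)$ respectively. The only (immaterial) difference is ordering: the paper applies quasi-strong convexity to the inner product immediately after the prox expansion and later cancels $-2\alpha D_f(x^k,x^*)$ against the smoothness bound, whereas you keep the inner product and convert the leftover $\tfrac{\alpha}{n}\NORMG{{\cM^\dagger}^{\frac12}(\mG(x^k)-\mG(x^*))}$ term upward through smoothness and strong convexity at the end — algebraically the same argument.
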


\subsection{Towards the proof of Theorem~\ref{thm:main2}}

\begin{lemma} (Slight extension of Lemma from~\cite{hanzely2019one})\label{lem:g_lemma}
Let $\cU$ be random linear operator which is identity in expectation. Let $\mG(x)$ be Jacobian at $x$ and $g^k = \frac1n \cU( \mG(x) -\mJ^k)\eR   -\frac1n\mJ^k\eR$.
Then for any $\mQ \in \R^{d\times d}, \mQ\succeq 0$ and all $k\geq 0$ we have
\begin{equation}\label{eq:g_lemma}
\E{ \norm{ g^k - \nabla f(x^*)}_\mQ^2} \leq    \frac{2}{n^2} \E{  \norm{ \cU \left(\mG(x^k) - \mG(x^*) \right) \eR}_\mQ^2  } + \frac{2}{n^2}  \E{ \norm{ \cU \left(\mJ^k - \mG(x^*) \right) \eR}_\mQ^2}.
\end{equation}
\end{lemma}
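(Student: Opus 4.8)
The plan is to rewrite the error $g^k-\nabla f(x^*)$ as a sum of two terms, each obtained by applying $\cU$ to a conditionally deterministic quantity, apply Young's inequality, and then discharge the extra factor $(\cI-\cU)$ on the second term via a second-moment-versus-variance bound. First I would recall from Algorithm~\ref{alg:SketchJac} that $g^k=\frac1n\mJ^k\eR+\frac1n\cU(\mG(x^k)-\mJ^k)\eR$, and that $\nabla f(x^*)=\frac1n\mG(x^*)\eR$ (since $f=\frac1n\sum_{i=1}^n f_i$ gives $\mG(x^*)\eR=\sum_{i=1}^n\nabla f_i(x^*)$). Abbreviating $A\eqdef\mG(x^k)-\mG(x^*)$ and $B\eqdef\mJ^k-\mG(x^*)$, so that $\mG(x^k)-\mJ^k=A-B$, a direct rearrangement gives
\[
g^k-\nabla f(x^*)=\frac1n(\mJ^k-\mG(x^*))\eR+\frac1n\cU(A-B)\eR=\frac1n\cU A\eR+\frac1n(\cI-\cU)B\eR.
\]

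Second, applying $\norm{a+b}_\mQ^2\leq 2\norm{a}_\mQ^2+2\norm{b}_\mQ^2$ (valid because $\norm{\cdot}_\mQ$ is a seminorm for $\mQ\succeq0$) and taking expectations yields
\[
\E{\norm{g^k-\nabla f(x^*)}_\mQ^2}\leq\frac{2}{n^2}\E{\norm{\cU A\eR}_\mQ^2}+\frac{2}{n^2}\E{\norm{(\cI-\cU)B\eR}_\mQ^2}.
\]
The first term on the right is already exactly the first term of~\eqref{eq:g_lemma}, since $A\eR=(\mG(x^k)-\mG(x^*))\eR$.

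Third, for the remaining term I would condition on the history (treating $x^k$ and $\mJ^k$, hence $B\eR$, as fixed) and use $\E{\cU}=\cI$, so that $\cU B\eR$ has conditional mean $B\eR$ and $(\cI-\cU)B\eR$ is precisely its centered version. Since $\mQ\succeq0$, the identity $\E{\norm{X-\E{X}}_\mQ^2}=\E{\norm{X}_\mQ^2}-\norm{\E{X}}_\mQ^2\leq\E{\norm{X}_\mQ^2}$ applied to $X=\cU B\eR$ gives $\E{\norm{(\cI-\cU)B\eR}_\mQ^2}\leq\E{\norm{\cU B\eR}_\mQ^2}$, which after taking total expectation is exactly the second term of~\eqref{eq:g_lemma}. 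The only mild subtlety is this conditioning step, needed so that $B\eR$ is deterministic when the variance inequality is invoked; the one genuine idea is the regrouping in the first step, which converts the awkward $\mJ^k$-dependence into the factor $(\cI-\cU)$ acting on $B\eR$, after which no serious obstacle remains.
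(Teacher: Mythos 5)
Your proof is correct and follows essentially the same route as the paper's: the identical regrouping of $g^k - \nabla f(x^*)$ into $\frac1n\cU(\mG(x^k)-\mG(x^*))\eR + \frac1n(\cI-\cU)(\mJ^k-\mG(x^*))\eR$, the same Young's inequality $\norm{a+b}_\mQ^2\leq 2\norm{a}_\mQ^2+2\norm{b}_\mQ^2$, and the same variance-versus-second-moment bound to drop the $(\cI-\cU)$ factor. Your explicit remark about conditioning on $x^k$ and $\mJ^k$ is a welcome clarification of a step the paper leaves implicit, and you correctly read $g^k$ from the algorithm rather than from the sign typo in the lemma statement.
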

\begin{proof}
Since $\nabla f(x^*) = \frac{1}{n}\mG(x^*) \eR$, we have
\begin{equation}\label{eq:nb87fvdbs8s}
g^k-\nabla f(x^*) = \underbrace{\frac1n \cU \left(\mG(x^k)- \mG(x^*) \right) \eR}_{a}  +  \underbrace{\frac1n \left(\mJ^k - \mG(x^*) \right) \eR - \frac1n \cU \left( \mJ^k - \mG(x^*) \right) \eR}_{b} .
\end{equation}
Applying the bound $\norm{a+b}_{\mQ}^2 \leq 2\norm{a}_{\mQ}^2 + 2\norm{b}_{\mQ}^2$ to \eqref{eq:nb87fvdbs8s} and taking expectations, we get
\begin{eqnarray*}
\E{ \norm{ g^k -\nabla f(x^*) }_\mQ^2} &\leq &
 \E{ \frac{2}{n^2} \norm{  \cU \left(\mG(x^k)- \mG(x^*) \right) \eR  }_\mQ^2} \\
 && \qquad + \E{ \frac{2}{n^2} \norm{   \left(\mJ^k-\mG(x^*) \right) \eR -   \cU \left(\mJ^k - \mG(x^*) \right) \eR }_\mQ^2  }\\
&=&
 \frac{2}{n^2} \E{  \norm{ \cU \left(\mG(x^k) - \mG(x^*) \right) \eR }_\mQ^2  } \\
 && \qquad + \frac{2}{n^2} \E{  \norm{ \left(\cI- \cU \right) \left(\mJ^k - \mG(x^*) \right) \eR }_\mQ^2 }.
\end{eqnarray*}

It remains to note that
\begin{eqnarray*}
\E{  \norm{ \left(\cI-  \cU) (\mJ^k - \mG(x^*) \right) \eR }_\mQ^2 }
 &=&
\E{ \norm{ \cU \left(\mJ^k - \mG(x^*) \right) \eR  }_{\mQ}^2 } - \norm{ \left( \mJ^k - \mG(x^*) \right) \eR }_\mQ^2
  \\
  &\leq& 
  \E{ \norm{ \cU \left(\mJ^k - \mG(x^*) \right) \eR }_\mQ^2 } .
\end{eqnarray*}

\end{proof}

 \begin{lemma} (\cite{hanzely2019one}, Lemma E.3) \label{lem:smooth_consequence} Assume that function $f_j$ are convex and $\mM_j$-smooth. Then
\begin{equation}\label{eq:smooth}
D_{f_j}(x,y) \geq \frac12 \norm{ \nabla f_j(x)-\nabla f_j(y) }^2_{\mM_j^{\dagger}}, \quad \forall x,y\in \R^d, 1\leq j \leq n.
\end{equation}
If $x-y\in \Null{\mM_j}$, then 
\begin{enumerate} 
\item[(i)]  \begin{equation}\label{eq:linear_on_subspace} f_j(x) = f_j(y) + \langle \nabla f_j(y), x-y\rangle,\end{equation}
\item[(ii)]
\begin{equation} \label{eq:n98g8ff} \nabla f_j(x)-\nabla f_j(y) \in \Null{\mM_j},\end{equation}
\item[(iii)]  
\begin{equation} \label{eq:nb87sgb} \langle \nabla f_j(x) - \nabla f_j(y),x-y\rangle =0.\end{equation}
\end{enumerate}

If, in addition, $f_j$ is bounded below, then $\nabla f_j(x)  \in \Range{\mM_j}$ for all $x$.
\end{lemma}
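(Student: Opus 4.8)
The plan is to establish the core inequality \eqref{eq:smooth} first, and then read off the three null-space identities \eqref{eq:linear_on_subspace}--\eqref{eq:nb87sgb} together with the final range statement as consequences. To prove \eqref{eq:smooth}, I would fix $x$ and introduce the shifted function $h(z) \eqdef f_j(z) - \langle \nabla f_j(x), z\rangle$, which is again convex and $\mM_j$-smooth and satisfies $\nabla h(x) = 0$, so that $x$ globally minimizes $h$. Consequently $h(x) \leq h(z)$ for every $z$, and coupling this with the smoothness upper bound of $h$ at $y$ gives $h(x) \leq h(y) + \langle \nabla h(y), z - y\rangle + \frac12 \norm{z - y}^2_{\mM_j}$ for all $z$. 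The crucial move is to substitute the single direction $z - y = -\mM_j^{\dagger}\nabla h(y)$ rather than to minimize globally: invoking the pseudoinverse identity $\mM_j^{\dagger}\mM_j\mM_j^{\dagger} = \mM_j^{\dagger}$, both the linear and the quadratic terms collapse and the right-hand side reduces to $h(y) - \frac12 \norm{\nabla h(y)}^2_{\mM_j^{\dagger}}$. Since $h(x) - h(y) = -D_{f_j}(y,x)$ and $\nabla h(y) = \nabla f_j(y) - \nabla f_j(x)$, rearranging and relabelling $x \leftrightarrow y$ produces \eqref{eq:smooth}.

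For the null-space identities, when $x - y \in \Null{\mM_j}$ we have $\norm{x-y}^2_{\mM_j} = 0$, so the smoothness upper bound reads $f_j(x) \leq f_j(y) + \langle \nabla f_j(y), x - y\rangle$ while convexity supplies the matching lower bound; equality, i.e. \eqref{eq:linear_on_subspace}, follows. Writing \eqref{eq:linear_on_subspace} for both $(x,y)$ and $(y,x)$ (both increments lie in $\Null{\mM_j}$) and adding the two equalities cancels the function values and yields \eqref{eq:nb87sgb}. For \eqref{eq:n98g8ff}, note that \eqref{eq:linear_on_subspace} makes $D_{f_j}(x,y) = 0$; substituting this into the already established \eqref{eq:smooth} forces $\norm{\nabla f_j(x) - \nabla f_j(y)}^2_{\mM_j^{\dagger}} = 0$, hence $\nabla f_j(x) - \nabla f_j(y) \in \Null{\mM_j^{\dagger}} = \Null{\mM_j}$.

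For the range statement, assume $f_j$ is bounded below and suppose, for contradiction, that $\nabla f_j(x_0)$ has a nonzero component $u \in \Null{\mM_j}$. As $\mM_j$ is symmetric positive semidefinite we have $\Range{\mM_j} \perp \Null{\mM_j}$, whence $\langle \nabla f_j(x_0), u\rangle = \norm{u}^2 > 0$. Applying \eqref{eq:linear_on_subspace} along the ray $x_0 - tu$, whose increment stays in $\Null{\mM_j}$, gives $f_j(x_0 - tu) = f_j(x_0) - t\norm{u}^2 \to -\infty$ as $t \to \infty$, contradicting boundedness below; therefore $\nabla f_j(x_0) \in \Range{\mM_j}$.

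The main obstacle is the bookkeeping with the pseudoinverse in the proof of \eqref{eq:smooth}: because $\mM_j$ is allowed to be singular, one cannot simply minimize the smoothness upper bound over all $z$, since that infimum is $-\infty$ along any null direction of $\mM_j$ on which $\nabla h(y)$ has a nonzero component. The resolution is precisely to plug in the single descent direction $-\mM_j^{\dagger}\nabla h(y)$ instead of optimizing, after which the identity $\mM_j^{\dagger}\mM_j\mM_j^{\dagger} = \mM_j^{\dagger}$ makes the quadratic terms telescope correctly. Everything else reduces to elementary manipulations of \eqref{eq:smooth} and \eqref{eq:linear_on_subspace}.
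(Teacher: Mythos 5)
Your proof is correct, and since the paper does not reprove this lemma (it imports it verbatim as Lemma E.3 of the cited reference), your argument stands on its own: the shift $h(z) \eqdef f_j(z) - \langle \nabla f_j(x), z\rangle$, which is minimized at $x$, combined with plugging the single direction $z - y = -\mM_j^{\dagger}\nabla h(y)$ into the smoothness upper bound and invoking the Moore--Penrose identity $\mM_j^{\dagger}\mM_j\mM_j^{\dagger} = \mM_j^{\dagger}$, is exactly the standard route used there, and it correctly sidesteps the fact that global minimization of the upper bound is impossible when $\mM_j$ is singular. The derivations of (i)--(iii) from smoothness plus convexity and of the range statement via the ray $x_0 - tu$, $u \in \Null{\mM_j}$, are likewise sound.
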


\begin{lemma}(\cite{hanzely2019one}, Lemma E.5) \label{lem:nb98gd8fdx}
Let $\cS$ be a random projection operator and $\cA$ any deterministic linear operator commuting with $\cS$, i.e.,  $\cA \cS = \cS \cA$. Further, let $\mX,\mY \in \R^{d\times n}$ and define $\mZ = (\cI-\cS) \mX + \cS \mY$. Then
\begin{itemize}
\item[(i)] $\cA \mZ = (\cI-\cS) \cA \mX + \cS \cA \mY $,
\item[(ii)] $\norm{\cA \mZ}^2 = \norm{(\cI-\cS) \cA \mX}^2  + \norm{\cS \cA \mY}^2 $,
\item[(iii)] $\E{\norm{\cA \mZ}^2} = \norm{(\cI-\E{\cS})^{1/2} \cA \mX}^2  + \norm{\E{\cS}^{1/2} \cA \mY}^2 $, where the expectation is with respect to $\cS$.
\end{itemize}
\end{lemma}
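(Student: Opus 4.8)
The statement to prove, Lemma~\ref{lem:nb98gd8fdx}, is an elementary identity describing how a fixed linear operator $\cA$ interacts with a random orthogonal projection $\cS$ when applied to the ``split'' matrix $\mZ = (\cI-\cS)\mX + \cS\mY$. The only structural facts I would use are that each realization of $\cS$ is a \emph{self-adjoint idempotent} with respect to the Frobenius inner product, i.e. $\cS = \cS^\top = \cS^2$ (this is precisely what ``projection operator'' means in the GJS framework), that consequently $\cI-\cS$ is also a self-adjoint idempotent, and that $\cS(\cI-\cS)=0$. Throughout, $\cA$, $\mX$, $\mY$ are deterministic and only $\cS$ carries the randomness.

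For part~(i) I would expand $\cA\mZ = \cA(\cI-\cS)\mX + \cA\cS\mY$ by linearity and push $\cA$ through $\cS$ using the commutation hypothesis $\cA\cS = \cS\cA$, which also gives $\cA(\cI-\cS) = (\cI-\cS)\cA$; this immediately yields $(\cI-\cS)\cA\mX + \cS\cA\mY$. For part~(ii) I would substitute~(i) into $\norm{\cA\mZ}^2$, expand the Frobenius square, and show the cross term vanishes: writing $u \eqdef \cA\mX$ and $v\eqdef \cA\mY$, self-adjointness and idempotency give $\langle (\cI-\cS)u,\, \cS v\rangle = \langle u,\, (\cI-\cS)\cS v\rangle = \langle u,\,(\cS-\cS^2)v\rangle = 0$. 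What survives is the Pythagorean sum $\norm{(\cI-\cS)\cA\mX}^2 + \norm{\cS\cA\mY}^2$.

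For part~(iii) I would take the expectation of the identity in~(ii) over $\cS$ and rewrite each resulting term as a quadratic form. Since $\cA\mX$ is deterministic, $\E{\norm{(\cI-\cS)\cA\mX}^2} = \E{\langle \cA\mX,\, (\cI-\cS)^\top(\cI-\cS)\cA\mX\rangle} = \langle \cA\mX,\, (\cI-\E{\cS})\cA\mX\rangle$, where I use $(\cI-\cS)^\top(\cI-\cS) = (\cI-\cS)$ and linearity of expectation; the $\cS$ term is handled identically. Finally, because an average of orthogonal projections satisfies $0 \preceq \E{\cS} \preceq \cI$, both $\E{\cS}$ and $\cI-\E{\cS}$ are positive semidefinite and admit symmetric square roots, so these quadratic forms are exactly $\norm{(\cI-\E{\cS})^{1/2}\cA\mX}^2$ and $\norm{\E{\cS}^{1/2}\cA\mY}^2$, which is the claim.

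There is no genuine obstacle here, as this is a cited result with a short verification; the only points requiring care are making explicit that ``random projection operator'' forces each realization of $\cS$ to be self-adjoint and idempotent (this is exactly what kills the cross term in~(ii) and what collapses $(\cI-\cS)^\top(\cI-\cS)$ to $\cI-\cS$ in~(iii)), keeping $\cA$, $\mX$, $\mY$ outside the expectation since only $\cS$ is random, and recording the positive semidefiniteness of $\E{\cS}$ and $\cI-\E{\cS}$ that legitimizes the square roots appearing in~(iii).
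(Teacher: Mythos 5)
Your proof is correct and follows the standard (and essentially only) route: commutation of $\cA$ with $\cS$ for part (i), the Pythagorean identity coming from self-adjoint idempotency of $\cS$ and $\cI-\cS$ for part (ii), and passing expectation through the quadratic forms plus positive semidefiniteness of $\E{\cS}$ and $\cI-\E{\cS}$ to legitimize the square roots in part (iii). Note that the paper does not prove this lemma at all — it imports it verbatim from the cited reference — and your verification, including the explicit remark that ``projection'' must mean \emph{orthogonal} projection for the cross term to vanish, matches the intended argument of that reference.
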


\paragraph{Proof of Theorem~\ref{thm:main2}}
For simplicity of notation, in this proof, all expectations are conditional on $x^k$, i.e., the expectation is taken with respect to the randomness of $g^k$. First notice that
\begin{equation}\label{eq:unbiased_xx}
\E{g^k} = \nabla f(x^k).
\end{equation}

For any differentiable function $h$ let $D_h(x,y)$ to be Bregman distance with kernel $h$, i.e.,  $D_h(x,y)\eqdef h(x)-h(y) - \langle \nabla h(y),x-y\rangle$. Since
\begin{equation}\label{eq:prox_opt_xx}
x^* = \proxgjs(x^* - \alpha \nabla f(x^*)),
\end{equation}
and since the prox operator is non-expansive, we have
\begin{eqnarray}
\E{\norm{x^{k+1} -x^*}^2 } &\overset{ \eqref{eq:prox_opt_xx}}{=} &
 \E{\norm{\proxgjs(x^k-\alpha g^k) - \proxgjs(x^*-\alpha \nabla f(x^*))  }^2}  
 \notag \\
 &\stackrel{\eqref{eq:prox_cont}+\eqref{eq:q_identity}}{\leq} &
 \E{\norm{x^k-  x^* -\alpha \Popt(   g^{k} - \nabla f(x^*) ) }^2}  
 \notag\\
& \overset{\eqref{eq:unbiased_xx}}{=}& 
 \norm{x^k  -x^*}^2 -2\alpha \<  \nabla f(x^k)- \nabla f(x^*) , x^k  -x^*> \notag \\ 
 && \qquad  + \alpha^2\E{\norm{ g^{k}- \nabla f(x^*) }^2_{\Popt}}
 \notag  \\ 
&\leq & 
   (1-\alpha\sigma')\norm{x^k  -x^*}^2 +\alpha^2\E{\norm{  g^{k} -\nabla f(x^*) }^2_{\Popt}}  \notag \\
   && \qquad -2\alpha D_f(x^k,x^*).
 \label{eq:convstepsub1XX_v2}
\end{eqnarray}

Since $f(x)=\frac{1}{n}\sum_{i=1}^n f_i(x)$, in view of \eqref{eq:smooth} we have
\begin{eqnarray}
D_{f}(x^k,x^*) \quad = \quad  \frac{1}{n}\sum_{i=1}^n D_{f_i}(x^k,x^*) & \overset{\eqref{eq:smooth}}{\geq} &\frac{1}{2n} \sum_{i=1}^n \norm{\nabla f_i(x^k) -\nabla f_i(x^*)  }^2_{{\mM_i^{\dagger}} }  \notag\\
&=&  
 \frac{1}{2n}  \left\|{\cM^\dagger}^{\frac12}\left(\mG(x^k) -\mG(x^*) \right) \right\|^2. \label{eq:nb98gd8ff_v2}
\end{eqnarray}

By combining \eqref{eq:convstepsub1XX_v2} and \eqref{eq:nb98gd8ff_v2}, we get
\begin{eqnarray*}
\E{\norm{x^{k+1} -x^*}^2 } & \leq &
   (1-\alpha\sigma')\norm{x^k  -x^*}^2 +\alpha^2\E{\norm{g^{k} -\nabla f(x^*) }_{\Popt}^2}  \\
   && \qquad -\frac{\alpha}{n} \left\|{\cM^\dagger}^{\frac12}( \mG(x^k) -\mG(x^*)) \right\|^2.
\end{eqnarray*}

Next, applying Lemma~\ref{lem:g_lemma} with $\mQ = \Popt$ leads to the estimate
\begin{eqnarray}
\E{\norm{x^{k+1} -x^*}^2 } &\leq &
   (1-\alpha\sigma')\norm{x^k  -x^*}^2 -\frac{\alpha}{n} \norm{ {\cM^\dagger}^{\frac12} \left(\mG(x^k) -\mG(x^*) \right) }^2 \notag 
   \\
   && \qquad  +  \frac{2\alpha^2}{n^2} \E{ \norm{ \cU  \left( \mG(x^k) - \mG(x^*) \right)e }_{\Popt}^2  } \notag \\
   && \qquad + \frac{2\alpha^2}{n^2}  \E{\left\|\cU \left( \mJ^k - \mG(x^*) \right)e \right\|_{\Popt}^2}   \label{eq:48u34719841234_v2} .
\end{eqnarray}

Since, by assumption,  both $\cB$ and ${\cM^\dagger}^{\frac12}$ commute with $\cS$, so does their composition $\cA \eqdef \cB {\cM^\dagger}^{\frac12}$. Applying Lemma~\ref{lem:nb98gd8fdx}, we get
 \begin{eqnarray}\label{eq:J_jac_bound}\nonumber
\E{ \NORMG{\cB {\cM^\dagger}^{\frac12} \left(\mJ^{k+1}-\mG(x^*)  \right) }}  &=&  \NORMG{ (\cI - \E{\cS})^{\frac12}  \cB {\cM^\dagger}^{\frac12} \left(\mJ^k-\mG(x^*) \right) }  \\
&& +  \NORMG{\E{\cS}^{\frac12}  \cB {\cM^\dagger}^{\frac12} \left(\mG(x^k)-\mG(x^*) \right) } .
\end{eqnarray}

Adding $\alpha$-multiple of~\eqref{eq:J_jac_bound} for $\cC = {\cM^\dagger}^{\frac12}$ to~\eqref{eq:48u34719841234_v2} yields
\begin{eqnarray*}
&&\E{\norm{x^{k+1} -x^*}^2 } + \alpha\E{ \NORMG{\cB\left({\cM^\dagger}^{\frac12}(\mJ^{k+1}-\mG(x^*) )\right)}}
\\
&& \qquad  \leq 
   (1-\alpha\sigma')\norm{x^k  -x^*}^2 + \frac{2\alpha^2}{n^2} \E{  \norm{ \cU \left(\mG(x^k) - \mG(x^*) \right)e}_{\Popt}^2  } +   \\
   && \qquad \qquad  
 \frac{2\alpha^2}{n^2}  \E{\|\cU(\mJ^k - \mG(x^*))e \|_{\Popt}^2} +   \alpha\NORMG{ (\cI - \E{\cS})^{\frac12} \left(\cB\left({\cM^\dagger}^{\frac12}(\mJ^k-\mG(x^*))\right)\right)} 
 \\
    && \qquad \qquad   + \alpha \NORMG{\E{\cS}^{\frac12}  \left(\cB \left({\cM^\dagger}^{\frac12}(\mG(x^k)-\mG(x^*))\right)\right)}-\frac{\alpha}{n} \left\| {\cM^\dagger}^{\frac12}(\mG(x^k) -\mG(x^*)) \right\|^2
   \\
&& \qquad  \stackrel{\eqref{eq:small_step_v2}}{\leq} 
(1-\alpha\sigma')\norm{x^k  -x^*}^2 + (1-\alpha\sigma')\alpha \E{\NORMG{\cB\left({\cM^\dagger}^{\frac12} \left(\mJ^{k}-\mG(x^*) \right)\right)}} \\
&& \qquad \qquad  
+\frac{2\alpha^2}{n^2} \E{  \| \cU(\mG(x^k) - \mG(x^*))e\|_{\Popt}^2  } +  \alpha \NORMG{\E{\cS}^{\frac12}  \left(\cB \left({\cM^\dagger}^{\frac12}(\mG(x^k)-\mG(x^*))\right)\right)}  \\
    && \qquad \qquad  
 -\frac{\alpha}{n} \left\| {\cM^\dagger}^{\frac12}(\mG(x^k) -\mG(x^*)) \right\|^2
   \\
&& \qquad  \stackrel{\eqref{eq:small_step2_v2}}{\leq} 
(1-\alpha\sigma') \left( \norm{x^k  -x^*}^2 +\alpha\E{ \NORMG{\cB\left({\cM^\dagger}^{\frac12}(\mJ^{k}-\mG(x^*) )\right)}} \right).
\end{eqnarray*}
Above, we have used~\eqref{eq:small_step_v2} with $\mX=\mJ^k-\mG(x^*)$ and~\eqref{eq:small_step2_v2} with $\mX= \mG(x^k) -\mG(x^*)$.

 \subsection{Proof of Theorem~\ref{thm:sega_as}}
First, due to our choice of $\cS$ we have

\begin{eqnarray*}
\E{\cS (x)} &=& \diag(p) x
\end{eqnarray*}
and at the same time $\cS$ and ${\cM^\dagger}^{\frac12}$ commute. Next,~\eqref{eq:ESO_sega_good} implies
\begin{equation*}
\E{\left\|\cU(\mM^{\frac12} x)\right\|^{2}_{\Popt}} = \| x \|^2_{\mM^{\frac12} \E{\sum_{i\in S}\pLi^{-1}\eLi \eLi^\top \Popt \sum_{i\in S}\pLi^{-1} \eLi \eLi^\top }\mM^{\frac12} }   \leq  \| x \|^2_{p^{-1}\circ w}.
\end{equation*}

In order to satisfy~\eqref{eq:small_step_v2} and \eqref{eq:small_step2_v2} it remains to have (we substituted $y = {\mM^\dagger}^{\frac12}x$):
\begin{equation} \label{eq:linear_sega_1}
2\alpha\|y \|^2_{p^{-1}\circ w}+ 
\left\|\left(\cI-\E{\cS}\right)^{\frac{1}{2}}\cB(y)\right\|^{2} \leq(1-\alpha \sigma)\|\cB(y)\|^{2},
\end{equation}

\begin{equation} \label{eq:linear_sega}
2\alpha\|y \|^2_{p^{-1}\circ w} + 
\left\|\left(\E{\cS}\right)^{\frac{1}{2}}\cB(y)\right\|^{2} \leq \|y\|^2.
\end{equation}

Let us consider $\cB$ to be the operator corresponding to the left multiplication with matrix $\diag(b)$. Thus for satisfy~\eqref{eq:small_step_v2} it suffices to have for all $i \in [d]$:
\[
2\alpha m_i \pLi^{-1} + b_i^2(1-\pLi) \leq b_i^2 (1-\alpha \sigma) \qquad \Rightarrow \qquad
2\alpha m_i \pLi^{-1} + b_i^2\alpha \sigma \leq b_i^2\pLi .
\]
For~\eqref{eq:small_step2_v2} it suffices to have for all $i\in [d]$
\[
2\alpha m_i \pLi^{-1} + b_i^2 \pLi \leq 1 .
\]
It remains to notice that choice $b_i^2 = \frac{1}{2\pLi}$ and $\alpha  = \min_i \frac{\pLi}{4m_i+ \sigma}$ is valid.

\section{Experiments: The choice of the objective\label{sec:exp_choice}}
In all experiments from section~\ref{sec:experiments}, we have chosen $f(x) = \frac12 x^\top \mM x - b^\top x$ where $x\in \R^{1000}$, while $\psi$ is an indicator function of the unit ball intersected with $\Range{\Popt}$. First, matrix $\mM$ was chosen according to Table~\ref{tbl:M}. Next, vector $b$ was chosen as follows: first we generate $\tilde{x}\in \R^d$ with independent normal entries, then compute $\tilde{b} = \mM^{-1} \tilde{x}$ and set $b = \frac{3}{2 \| \tilde{b} \|}  \tilde{b}$. Lastly, for Figure~\ref{fig:variable}, the projection matrix $\Popt$ of rank $r$ was chosen as a block diagonal matrix with $r$ blocks, each of them being the matrix of ones multiplied by $\frac{r}{d}$.

\begin{table}[!h]
\caption{Choice of $\mM$. $\Odd$ is set of all odd positive integers smaller than $d+1$, while matrix $\mU$ was set as random orthonormal matrix (generated by QR decomposition from a matrix with independent standard normal entries).}
\label{tbl:M}
\begin{center}
\begin{tabular}{|c|c|c|c|}
\hline
Type & $\mM$  & Figure~\ref{fig:identity}: $L$ &  Figure~\ref{fig:variable}: $L$ \\
\hline
\hline
1 & $\mU \left( \mI + \mI_{:, \Odd}  \diag\left( ((L-1)^{\frac{1}{500}})^{(1:500)} \right)  \mI_{\Odd, :} \right) \mU^\top $  & 100 & 1000 \\
\hline
2 & $\mU \left( \mI +\sum_{i=1}^{100} (L-1) e_ie_i^\top \right) \mU^\top $ & 100 & 1000 \\
\hline
3&$\mU \left( \kappa \mI -\sum_{i=1}^{100} (L-1) e_ie_i^\top \right) \mU^\top $ &100 & 1000 \\
\hline
4 & $\left( \mI + \frac{L}{500}\mI_{:, \Odd}  \diag\left(1:500 \right)  \mI_{\Odd, :} \right) $& 100 & 1000 \\
\hline
\end{tabular}
\end{center}
\end{table}

\end{document}